 \tikzset{mynode/.style={draw,circle,inner sep=1pt,outer sep=0pt}}
\newtheorem{teo}{Theorem}[section]
\newtheorem{cor}[teo]{Corollary}
\newtheorem{lem}[teo]{Lemma}
\newtheorem{defi}[teo]{Definition}
\newtheorem{prop}[teo]{Proposition}
\newtheorem{example}[teo]{Example}
\newtheorem{remark}[teo]{Remark}
\def\T{\mathbb{T}}
\dedicatory{}
\begin{document}

\title{ABSTRACTLY CONSTRUCTED PRIME SPECTRA}

\author{Alberto Facchini}
\address[Alberto Facchini]{Dipartimento di Matematica ``Tullio Levi-Civita'', Universit\`a di Padova, Via Trieste 63, 35121 Padova, Italy}
\thanks{}
\email{facchini@math.unipd.it}

\author{Carmelo Antonio Finocchiaro}
\address[Carmelo Antonio Finocchiano]{Dipartimento di Matematica e Informatica, Universit\`a\ di Catania, Citt\`a\ Universitaria, viale Andrea Doria 6, 95125 Catania, Italy}
\thanks{}
\email{cafinocchiaro@unict.it}

\author{George Janelidze}
\address[George Janelidze]{Department of Mathematics and Applied Mathematics, University of Cape Town, Rondebosch 7700, South Africa}
\thanks{}
\email{george.janelidze@uct.ac.za}

\subjclass[2010]{}

\date{\today}

\keywords{multiplicative lattice, complete lattice, algebraic lattice, lattice of ideals, lattice of congruences, sober space, spectral space, coherent space, prime spectrum, Zariski topology, compactness, prime element, radical element, solvable element, prime ideal, commutator}

\subjclass[2020]{06F99, 13A15, 14A05, 06D05, 16Y60, 20M12, 16D25, 18E13, 06D22, 54D30,	08A30, 08B99, 08B10, 16Y30}

\maketitle

\begin{abstract} The main purpose of this paper is a wide generalization of one of the results abstract algebraic geometry begins with, namely of the fact that the prime spectrum $\mathrm{Spec}(R)$ of a unital commutative ring $R$ is always a spectral (=coherent) topological space. In this generalization, which includes several other known ones, the role of ideals of $R$ is played by elements of an abstract complete lattice $L$ equipped with a binary multiplication with $xy\leqslant x\wedge y$ for all $x,y\in L$. In fact when no further conditions on $L$ are required, the resulting space can be and is only shown to be sober, and we discuss further conditions sufficient to make it spectral. This discussion involves establishing various comparison theorems on so-called prime, radical, solvable, and locally solvable elements of $L$; we also make short additional remarks on semiprime elements. We consider categorical and universal-algebraic applications involving general theory of commutators, and an application to ideals in what we call the commutative world. The cases of groups and of non-commutative rings are briefly considered separately.

\end{abstract}

\section{Introduction}

What is a spectral space, also called a coherent topological space? There is a purely topological definition, recalled in the next section, but, even more importantly in a sense, it has two well-known algebraic origins, which can be used as algebraic definitions:

\textbf{Fact 1.1.} The prime spectrum $\mathrm{Spec}(R)$ of a unital commutative ring $R$, defined as the set of prime ideals of $R$ equipped with Zariski topology, is a spectral space. Moreover, as shown by M. Hochster \cite{[H1969]}, every spectral space occurs this way.

\textbf{Fact 1.2.} The opposite category of distributive lattices is equivalent to the category of spectral spaces (with so-called spectral maps as morphisms). This is clearly explained e.g. by P. T. Johnstone in \cite{[J1982]}; like in that book, we assume all lattices to be bounded.

Obtaining the same kind of spaces from unital commutative rings and from distributive lattices could look surprising before having the following far more recent result of A. Pe\~na, L. M. Ruza, and J. Vielma \cite{[PRV2009]}:

\textbf{Fact 1.3.} Fact 1.1 extends to unital commutative semirings as Theorem 3.1 of \cite{[PRV2009]}. In particular, this theorem applies to unital commutative rings and to distributive lattices at the same time.

However, this result begs for a further generalization, as the following facts suggest:

\textbf{Fact 1.4.} Once commutative rings are replaced with commutative semirings, why not removing the additive structure completely? And indeed, the commutative monoid counterpart of Fact 1.3 (and more) can be found in \cite{[V2010]}. 

\textbf{Fact 1.5.} Let $R$ be a non-commutative unital ring. A prime ideal of $R$ can still be defined as a proper ideal $P$ with $XY\subseteq P\Rightarrow(X\subseteq P\,\,\,\text{or}\,\,\,Y\subseteq P)$ for all ideals $X$ and $Y$ of $R$, and we can still consider $\mathrm{Spec}(R)$ defined as the set of prime ideals of $R$ equipped with the Zariski topology. This space is not spectral in general, but:
\begin{itemize}
	\item As shown by I. Kaplansky \cite{[K1974]} it is spectral if the product of any two finitely generated ideals of $R$ is finitely generated itself; he called such $R$ a neo-commutative ring.
	\item L. P. Belluce \cite{[B1991]} proved the same assuming that the radical of the product of any two principal ideals of $R$ is the radical of a finitely generated ideal; he called such $R$ quasi-commutative.
	\item However, it was I. Kaplansky again who gave an example of a quasi-commu-tative ring that is not neo-commutative \cite{[K1994]}. For some further remarks see I. Klep and M. Tressl \cite{[KT2007]}.  
\end{itemize}

\textbf{Fact 1.6.} Discussing analogies between groups and commutative rings, where commutators of normal subgroups play the role of products of ideals, E. Schenkman \cite{[S1958]} (see also K. K. Shchukin \cite{[S1960]}) calls a normal subgroup $P$ of a group $G$ prime, if $[X,Y]\subseteq P\Rightarrow(X\subseteq P\,\,\,\text{or}\,\,\,Y\subseteq P)$ for all normal subgroups $X$ and $Y$ of $G$. However, general commutator theory tells us that this is more than just an analogy (discussing which we don't require rings to be unital and identify ring congruences with ideals). In fact there are more than three general notions of commutator that give the product of ideals in the case of (ideals of) commutative rings and the commutator of normal subgroups in the case of (normal subgroups of) groups. The main three such commutators are P. Higgins' commutator \cite{[H1956]}, S. A. Huq's commutator \cite{[H1968]}, and J. D. H. Smith's commutator \cite{[S1976]}. It is interesting that in the case of ideals of non-commutative rings each of three above-mentioned commutators of $X$ and $Y$ is equal to $XY+YX$, not necessarily to $XY$. Therefore commutator theory suggests to define a prime ideal via the implication $XY+YX\subseteq P\Rightarrow(X\subseteq P\,\,\,\text{or}\,\,\,Y\subseteq P)$. Fortunately, this definition, seemingly strange from the viewpoint of ring theory, is equivalent to the usual one.

\textbf{Fact 1.7.} The so-called ``abstract theory of ideals'' has almost hundred years of development. It replaces ideals of a ring, commutative or not, with elements of an abstract complete lattice $L$ with a binary multiplication replacing multiplication of ideals. The multiplication sometimes satisfies certain conditions; for example:
\begin{itemize}
	\item W. Krull \cite{[K1924]}, speaking of ``axiomatic theory of ideals'', in fact implicitly introduces certain structures involving order, multiplication and division, where the axioms involving multiplication and lattice operations are: associativity and commutativity of multiplication, distributivity of multiplication with respect to arbitrary non-empty joins, and an inequality, which in the present day language would be written as $ab\leqslant a\wedge b$ (for all $a$ and $b$), while Krull writes $\mathfrak{a}\cdot\mathfrak{b}\geq[\mathfrak{a},\mathfrak{b}]$. Of course $\leqslant$ versus $\geq$ is nothing but imitation of ideal inclusion versus imitation of inequalities between natural numbers that generate ideals of the ring of integers. Next, in \cite{[K1928]} (where he writes the inequality above as $\mathfrak{a}\cdot\mathfrak{b}\geq\mathfrak{a}\cap\mathfrak{b}$), he does not require the commutativity of multiplication anymore, thinking of ideals of non-commutative rings. In both papers `abstract' prime ideals are involved.  
	\item The book \cite{[D1990]}, dedicated to R. P. Dilworth's and related work, has a chapter called ``Multiplicative lattices'' with several old reprinted and new papers, where lattices equipped with a multiplication and/or residuation (=`division') are explicitly introduced and studied; the earlier papers \cite{[W1937]} and \cite{[W1938]} of M. Ward should also be mentioned here. Note also that Dilworth's introduction (called ``Background'') to ``Multiplicative lattices'' begins with ``In the middle 1930's Morgan Ward, largely from reading the work of E. Noether,	became convinced that much of the basic structure theory of commutative rings could be formulated in lattice theoretic terms provided an appropriate multiplication was defined over the lattice.'' The axioms involving multiplication and lattice operations have only minor differences from those of Krull. Note: we omit mentioning more recent related literature that does not follow the direction we are interested in this paper. References to that literature (of 20th century) can be found in an important survey \cite{[AJ2001]} of D. D. Anderson and E. W. Johnson, which mentions many of their papers among others.
	\item S. A. Amitsur \cite{[A1952]} goes much further and developes a general theory of radicals in an (abstract) lattice equipped with what he calls an $H$-relation. For him a lattice equipped with a multiplication is only a very special case, and, moreover, his multiplication is neither associative nor commutative in general; he does require, however, the distributivity of multiplication with respect to binary joins and the inequality $ab\leqslant a\wedge b$. The reason for abandoning associativity and commutativity is the desire to consider ideals of non-associative (non-commutative) rings, which he does in \cite{[A1954a]} and in \cite{[A1954b]}, although the case of associative rings remains the most important one there. He also indicates, in \cite{[A1954a]}, that many properties of radicals extend from ideals of rings to ideals in categories satisfying suitable conditions, saying: ``We do not intend to give here the list of axioms such a category has to satisfy, but only a list of conditions, some of which may be considered as axioms, others as lemmas, to be valid in such a category in order that the whole theory can be developed in it''.
	\item The expression ``abstract ideal theory'' used by various authors, appears, in particular, in the book \cite{[B1973]} of Garrett Birkhoff, where he begins Section 10 of Chapter XIV with ``The general theory of ideals in Noetherian rings centers around the concepts of primary and irreducible ideal, and of the radical of an ideal. As was first shown	by Ward and Dilworth \cite{[WD1939]}\footnote{This paper is one of those reprinted in \cite{[D1990]}.}, Part IV (here and below we are using our numeration of references), much of this theory is true in general Noetherian $l$-monoids. The present section develops this idea'', and adds in the footnote ``This idea was implicit in W. Krull \cite{[K1924]}, and developed by him in many later papers''. A few lines below, taking about radicals, he adds another footnote saying: ``For other lattice-theoretic approaches to the radical, see \cite{[A1952]}-\cite{[A1954b]} and \cite{[B1956]}''. What Birkhoff calls ``$l$-monoid'' is the same as a lattice equipped with a multiplication that is associative, distributive with respect to binary joins, and has an identity element. But Birkhoff considers several other structures including $m$-lattices (=$l$-groupoids), where associativity and the existence of identity element is not required. He also considers ``prime elements'' and compares them with ``maximal elements'' in rather general contexts.
	\item K. Keimel \cite{[K1972]} seems to be the first to generalize Zariski topology (which he, as well as some other authors, calls hull kernel topology), defining it on the set of minimal prime elements of a lattice equipped with a multiplication. He points out that his ``setting is close to that of Steinfeld \cite{[S1968]}'', although Steinfeld's multiplication is not necessarily binary, but it is $n$-ary, for an arbitrary $n\geqslant2$ (and no topology is involved).    	        
\end{itemize}   

\textbf{Fact 1.8.} A systematic study of prime ideals in categories satisfying carefully chosen axioms (in fact due to S. A. Huq \cite{[H1968]}) and in varieties of groups with multiple operators in the sense of P. J. Higgins \cite{[H1956]} was initiated by A. Buys and her collaborators and students, especially S. G. Botha and G. K. Gerber: see \cite{[BG1982]}, \cite{[G1983]}, \cite{[BG1985]}, \cite{[BB1985]}, \cite{[B1986]}, \cite{[G1989]}, and related papers. Independently of that, A. Ursini \cite{[U1984]} introduced and studied prime ideals in ideal determined varieties of universal algebras. The term ``ideal determined'' was first used by him and H.-P. Gumm in \cite{[GU1984]}; originally \cite{[U1972]} he used the term ``BIT'' (as an abbreviation of ``buona teoria degli ideali''). The precise relationship between categories satisfying Huq's axioms and ideal determined varieties of universal algebras was clarified much later, via semi-abelian categories in the sense of \cite{[JMT2002]} and ideal determined categories in the sense of \cite{[JMTU2010]}. At the same time, varieties of groups with multiple operators were the first motivating special case for ideal determined varieties of universal algebras. What seems to be most important to mention, in connection with what are doing here, are two papers of P. Aglian\`o, where the prime spectra of universal algebras were considered for the first time, for ideal determined varieties \cite{[A1989]} and then, much more generally, for congruence modular varieties \cite{[A1993]}. 

\textbf{Fact 1.9.} An extensive literature is devoted to quantales (see K. Rosenthal's book \cite{[R1990]} and its references), introduced by C. J. Mulvey \cite{[M1986]}, not to mention locales (see \cite{[J1982]} and its references). We will recall the definitions, and results on locales we use, in Section 3. Now let us only point out that many kinds of spectra of a quantale are considered in \cite{[R1990]}; see also \cite{[FFS2016a]} and \cite{[FFS2016b]} for a `non-quantale' approach to (different) spectra.\\   

The main purpose of this paper is to prove that $\mathrm{Spec}(L)$, the space of prime elements of a complete lattice $L$ equipped with a multiplication (we say ``complete multiplicative lattice''), is spectral in a clearly motivated general situation, and to show that this applies to categorical and universal-algebraic contexts really far more generally, than it was known before. Briefly, the paper is organized as follows:

In Section 2 we introduce our notion of complete multiplicative lattice (Definition 2.1), and denote, there and afterwards, a fixed such structure by $L$. The only requirement on the multiplication of $L$ is the inequality $xy\leqslant x\wedge y$ for all $x,y\in L$ (we use the letters $x$ and $y$ instead of $a$ and $b$). This so mild requirement is exactly what we need to prove that the space $\mathrm{Spec}(L)$ of prime elements of $L$, defined in the standard way, is a sober space. In particular, prime and radical elements are defined, and this is done in a standard way. 

In Section 3 we introduce `minimum' of relevant categories and functors and recall some `pointfree topology', essentially from \cite{[J1982]}, to present an equivalent condition for $\mathrm{Spec}(L)$ to be spectral in terms of the frame $\sqrt{L}$ of its radical elements (Theorem 3.6).

The purpose of Section 4 is to deepen Theorem 3.6 by using the radical closure operator on $L$ instead of using $\sqrt{L}$. The result is Theorem 4.4, which, unfortunately, gives only sufficient (not necessary) conditions for the spectrality of $\mathrm{Spec}(L)$. It is interesting that while its condition (d) seems to suggest that it covers Belluce's result mentioned in Fact 1.5, our further analysis (in Subsection 12.7) of its condition (c) shows that it only covers Kaplansky's result (see Fact 1.5 again).

Section 5 makes a straightforward generalization of known observations in order to show that $\mathrm{Spec}(L)$ is often `large enough to be interesting' unlike e.g. the special case where $xy=0$ for all $x,y\in L$, making $\mathrm{Spec}(L)$ empty. 

The notion of \textit{solvability} introduced in Section 6, contrary to \textit{nilpotency} used in ring theory for the same purpose, is in fact suggested by commutator theory and confirmed by what Amitsur says in Section 1 of \cite{[A1954b]}. We compare it (Theorem 6.13) with \textit{local solvability}, which we also introduce, inspired by a more ring-theoretic story, where again, the word ``solvability'' is never used. The aim here is to gain a better understanding of the radical closure operator, trying, in particular, to avoid what we call the weak Kaplansky condition (since, e.g. it does not hold in the lattice of ideals of a non-commutative ring; see Subsection 12.7 again), and what we get is:
\begin{itemize}
	\item Theorem 6.13 says that solvability and local solvability agree well, but this requires the weak Kaplansky condition.
	\item Theorem 6.17 describes the radical closure operator in terms of local solvability without requiring the weak Kaplansky condition. The idea here goes back to J. Levitzki (see \cite{[L1951]}, where, however, only the ring-theoretic context is considered).
\end{itemize}

Section 7 introduces another `solvability', related to those previously introduced (Theorem 7.4), which allows us to reformulate condition 4.4(c), and obtain our main result (Theorem 7.7), which can be seen as an improved version of Theorem 4.4. Unfortunately it does use the weak Kaplansky condition, but the reader is supposed to agree, especially with the examples given in Sections 10 and 11, that it is the only condition there that is possibly hard to check. The new solvability of Section 7 is inspired by Amitsur's upper radical construction \cite{[A1952]}, which generalizes the ring-theoretic one.

Section 8 only recalls the definition of an internal pseudogroupoid introduced in \cite{[JP2001]}.

Section 9 begins with a categorical context involving an abstract notion of commutator, far more general than the contexts considered in, say, \cite{[A1954a]}, \cite{[BB1985]}, and \cite{[B1986]}, and explains how complete multiplicative lattices of internal equivalence relations with sober spectra occur there (Theorem 9.2). After that it considers special commutators defined via internal pseudogroupoids as in \cite{[JP2001]}; this context is still far more general than those where prime ideals in categories were considered by other authors. 

Section 10 describes the passage from categories to varieties of universal algebras, and shows that Theorem 7.7 is widely applicable to congruence lattices of congruence modular universal algebras. 

Section 11 presents another application to a wide class of special cases, as its title shows, to what we think should be called ideal lattices in the \textit{commutative world}. This includes results mentioned in Facts 1.3 and 1.4 as very special cases.

Our story suggested many natural further questions, and, in order to avoid making the paper too long, we answered only a few of them, in the form of additional remarks collected in Section 12.

Ending this Introduction, we should apologize to those authors whose work related to our story is either not mentioned properly, or not mentioned at all: since we use so many references with the oldest one from 1924, it was just too hard to draw the line between `closely related' and `less related'. In particular, we don't compare our presentation with the ones of O. Steinfeld \cite{[S1968]} (and \cite{[S1971]}) and of K. Keimel \cite{[K1972]} (except a remark in 12.4(d)), we don't compare our constructions with those of M. Ern\'e \cite{[E2000]}, and we don't discuss any links with the general theory of radicals (see e.g. B. J. Gardner and R. Wiegandt \cite{[GW2004]} and references therein).

\textbf{Acknowledgment.} We are very grateful to Aldo Ursini and Paolo Aglian\`o for suggesting to look at the papers \cite{[U1984]}, \cite{[A1989]}, and \cite{[A1993]}.               

\section{Abstract prime spectra}

\begin{defi}\emph{
    A \textit{complete multiplicative lattice} is a complete lattice $L$ equipped with a multiplication satisfying $xy\leqslant x\wedge y$ for all $x,y\in L$.} 
\end{defi}

\textit{Throughout this paper $L$ will denote a complete multiplicative lattice, whose smallest and largest elements will be denoted by $0$ and $1$, respectively.}

\begin{defi}\emph{
	An element $p\neq1$ in $L$ is said to be \textit{prime} if it satisfies the implication
	\begin{equation*}
	xy\leqslant p\Rightarrow(x\leqslant p\,\,\,\text{or}\,\,\,y\leqslant p);
	\end{equation*}
	the set $\mathrm{Spec}(L)$ of all such elements in $L$ will be called the \textit{Zariski spectrum} of $L$.}
\end{defi}

\begin{remark}\emph{
	As follows from our definitions, an element $p\neq1$ in $L$ is prime if and only if
		\begin{equation*}
		xy\leqslant p\Leftrightarrow(x\leqslant p\,\,\,\text{or}\,\,\,y\leqslant p).
		\end{equation*}}
\end{remark}

For any $x\in L$ we put
\begin{equation*}
\mathrm{V}(x)=\{p\in\mathrm{Spec}(L)\,|\,x\leqslant p\},
\end{equation*}    
and, for all $x,y\in L$ and $S\subseteq L$, we have: 
\begin{itemize}
	\item $\mathrm{V}(1)=\emptyset$.
	\item $\mathrm{V}(xy)=\mathrm{V}(x)\cup\mathrm{V}(y)$. Indeed, we have
	\begin{equation*}
	p\in\mathrm{V}(xy)\Leftrightarrow xy\leqslant p\Leftrightarrow(x\leqslant p\,\,\,\text{or}\,\,\,y\leqslant p)\Leftrightarrow p\in\mathrm{V}(x)\cup\mathrm{V}(y).
	\end{equation*}
	\item $\mathrm{V}(\bigvee S)=\bigcap_{s\in S}\mathrm{V}(s)$.
\end{itemize}

This allows us to define a topology on $\mathrm{Spec}(L)$ by choosing closed sets to be all sets of the form $\mathrm{V}(x)$ $(x\in L)$.  This topology is to be called \textit{Zariski topology}, and from now on we assume that $\mathrm{Spec}(L)$ is equipped with this topology, and call this space the \textit{prime spectrum} of $L$.
\begin{defi}\emph{
	An element in $L$ is said to be a \textit{radical element} if it can be presented as a meet of prime elements. For an arbitrary element $x$ in $L$, we will write $\sqrt{x}$ for the smallest radical element $r$ in $L$ with $x\leqslant r$, and call this element the radical of $x$.}
\end{defi}
It is obvious that $\sqrt{-}:L\to L$ is a closure operator, that is:
\begin{itemize}
	\item $x\leqslant y\Rightarrow\sqrt{x}\leqslant\sqrt{y}$,
	\item $x\leqslant\sqrt{x}$,
	\item $\sqrt{\sqrt{x}}=\sqrt{x}$,
\end{itemize}
for all $x,y\in L$. Furthermore, $\sqrt{x}=x$ if and only if $x$ is a radical element, and from Definition 2.4, we immediately obtain:
\begin{prop}
	$\mathrm{V}(\sqrt{x})=\mathrm{V}(x)$ and $\sqrt{x}\leqslant\sqrt{y}\Leftrightarrow\mathrm{V}(y)\subseteq\mathrm{V}(x)$ for all $x,y\in L$. Furthermore, if $p$ is prime, then $\mathrm{V}(p)$ is the closure of $\{p\}$.\qed
\end{prop}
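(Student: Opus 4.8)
The plan is to reduce everything to a single observation: that $\sqrt{x}$ coincides with the meet $\bigwedge\mathrm{V}(x)$ of all prime elements above $x$ (with the convention that the empty meet is $1$, which is a radical element, being an empty meet of primes). Combined with the order-reversing behaviour of $\mathrm{V}$ already recorded just before the statement, this gives all three assertions quickly.

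First I would prove $\mathrm{V}(\sqrt{x})=\mathrm{V}(x)$. The inclusion $\mathrm{V}(\sqrt{x})\subseteq\mathrm{V}(x)$ is immediate from $x\leqslant\sqrt{x}$ and the definition of $\mathrm{V}$. For the reverse inclusion, take $p\in\mathrm{V}(x)$; then $p$ is a prime element with $x\leqslant p$, hence a radical element lying above $x$, so by minimality of $\sqrt{x}$ we get $\sqrt{x}\leqslant p$, i.e. $p\in\mathrm{V}(\sqrt{x})$.

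Next I would establish the key identity $\sqrt{x}=\bigwedge\mathrm{V}(x)$. On one hand $\bigwedge\mathrm{V}(x)$ is a meet of prime elements, hence a radical element, and it is $\geqslant x$ since every $p\in\mathrm{V}(x)$ is $\geqslant x$; so $\sqrt{x}\leqslant\bigwedge\mathrm{V}(x)$. On the other hand, writing $\sqrt{x}$ itself as a meet $\bigwedge Q$ of prime elements (possible, since it is radical), each $q\in Q$ satisfies $q\geqslant\sqrt{x}\geqslant x$, so $Q\subseteq\mathrm{V}(x)$ and hence $\bigwedge\mathrm{V}(x)\leqslant\bigwedge Q=\sqrt{x}$. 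From this the equivalence $\sqrt{x}\leqslant\sqrt{y}\Leftrightarrow\mathrm{V}(y)\subseteq\mathrm{V}(x)$ follows: if $\sqrt{x}\leqslant\sqrt{y}$ then, using that $\mathrm{V}$ reverses order together with the first part, $\mathrm{V}(y)=\mathrm{V}(\sqrt{y})\subseteq\mathrm{V}(\sqrt{x})=\mathrm{V}(x)$; conversely $\mathrm{V}(y)\subseteq\mathrm{V}(x)$ gives $\sqrt{y}=\bigwedge\mathrm{V}(y)\geqslant\bigwedge\mathrm{V}(x)=\sqrt{x}$.

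Finally, for the last assertion: the closed sets of $\mathrm{Spec}(L)$ are by definition exactly the sets $\mathrm{V}(x)$ with $x\in L$; the set $\mathrm{V}(p)$ is closed and contains $p$ (as $p\leqslant p$); and any closed set $\mathrm{V}(x)$ containing $p$ satisfies $x\leqslant p$, hence $\mathrm{V}(p)\subseteq\mathrm{V}(x)$. Thus $\mathrm{V}(p)$ is the smallest closed set containing $p$, i.e. $\overline{\{p\}}$. I do not expect any genuine obstacle here; the only point needing a moment's care is the degenerate case $\mathrm{V}(x)=\emptyset$ in the identity $\sqrt{x}=\bigwedge\mathrm{V}(x)$, where one must remember that the empty meet equals $1$ and that $1$ is a radical element.
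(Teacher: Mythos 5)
Your argument is correct and is exactly the routine verification the paper leaves implicit (Proposition 2.5 is stated with no proof, as ``immediate'' from Definition 2.4). The key identity $\sqrt{x}=\bigwedge\mathrm{V}(x)$, including the empty-meet convention, is the right way to make the omitted details explicit, and each of the three assertions then follows as you describe.
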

\begin{lem}
	$\mathrm{Spec}(L)$ is a sober space. That is:
	\begin{itemize}
		\item [(a)] $\mathrm{Spec}(L)$ is a $\mathrm{T}_0$-space;
		\item [(b)] every irreducible closed subset of $\mathrm{Spec}(L)$ is the closure of a one-element set.
	\end{itemize}	
\end{lem}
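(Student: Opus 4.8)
The plan is to treat (a) and (b) separately, using Proposition 2.6 throughout; in both cases the key observation is that every prime element, being (trivially) a meet of prime elements, is radical, so $\sqrt{p}=p$ whenever $p$ is prime. For (a), recall that a $\mathrm{T}_0$-space is precisely one in which distinct points have distinct closures. If $p,q\in\mathrm{Spec}(L)$ have the same closure, then $\mathrm{V}(p)=\mathrm{V}(q)$ by Proposition 2.6, hence $\sqrt{p}\leqslant\sqrt{q}$ and $\sqrt{q}\leqslant\sqrt{p}$, i.e. $\sqrt{p}=\sqrt{q}$; since $\sqrt{p}=p$ and $\sqrt{q}=q$ this forces $p=q$, so $\mathrm{Spec}(L)$ is $\mathrm{T}_0$.

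For (b), let $C$ be a nonempty irreducible closed subset and write $C=\mathrm{V}(x)$. Put $q:=\bigwedge C$. This is a meet of prime elements, hence a radical element, and $x\leqslant p$ for all $p\in C$ gives $x\leqslant q$; moreover any radical element above $x$ is a meet of primes all lying in $\mathrm{V}(x)=C$, so $q$ is the least radical element above $x$, that is, $q=\sqrt{x}$. Consequently $\mathrm{V}(q)=\mathrm{V}(\sqrt{x})=\mathrm{V}(x)=C$ by Proposition 2.6, so once we know $q$ is prime we will have $\overline{\{q\}}=\mathrm{V}(q)=C$, which is exactly what (b) requires. Note also $q\neq1$: picking any $p\in C$ (possible since $C\neq\emptyset$) we get $q\leqslant p\neq1$, whence $q\neq1$.

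It remains to check that $q$ is prime, and this is the one step where irreducibility is actually used. Suppose $ab\leqslant q$. For each $p\in C$ we have $ab\leqslant q\leqslant p$, so primeness of $p$ yields $a\leqslant p$ or $b\leqslant p$; hence $C\subseteq\mathrm{V}(a)\cup\mathrm{V}(b)$, and therefore $C=(C\cap\mathrm{V}(a))\cup(C\cap\mathrm{V}(b))$ is a union of two closed subsets of $C$. By irreducibility $C\subseteq\mathrm{V}(a)$ or $C\subseteq\mathrm{V}(b)$; in the first case $a\leqslant p$ for all $p\in C$, i.e. $a\leqslant\bigwedge C=q$, and symmetrically $b\leqslant q$ in the second case. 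Thus $ab\leqslant q$ implies $a\leqslant q$ or $b\leqslant q$, so $q$ is prime, completing the argument.

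The main obstacle is really just the bookkeeping that makes irreducibility applicable: rewriting the hypothesis $ab\leqslant q=\bigwedge C$ as a covering $C\subseteq\mathrm{V}(a)\cup\mathrm{V}(b)$ by two closed sets. After that, this is the familiar principle that the meet of all points of an irreducible closed set is a generic point for it; and the uniqueness of such a generic point, though not demanded by the statement, follows from part (a).
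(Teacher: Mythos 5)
Your proof is correct and takes essentially the same route as the paper's: both arguments present the irreducible closed set as $\mathrm{V}(q)$ for a radical element $q\neq 1$ (your $q=\bigwedge C=\sqrt{x}$ is precisely the paper's choice of radical representative) and then use irreducibility of the covering $\mathrm{V}(q)\subseteq\mathrm{V}(a)\cup\mathrm{V}(b)$ together with radicality of $q$ to show $q$ is prime, hence a generic point via $\overline{\{q\}}=\mathrm{V}(q)$. The only blemish is a reference slip: the facts you invoke ($\mathrm{V}(\sqrt{x})=\mathrm{V}(x)$, the order-reversing correspondence, and $\overline{\{p\}}=\mathrm{V}(p)$) are Proposition 2.5, not 2.6.
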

\begin{proof}
	(a): Just note that if $p\nleqslant q$ in $\mathrm{Spec}(L)$, we have $q\in\mathrm{V}(q)\setminus\mathrm{V}(p)$.
	
	(b): Let $M$ be an irreducible closed subset of $\mathrm{Spec}(L)$. Without loss of generality we can assume that $M=\mathrm{V}(p)$, where $p\neq1$ and $p$ is a radical element of $L$. Suppose $xy\leqslant p$. Then $\mathrm{V}(p)\subseteq\mathrm{V}(xy)=\mathrm{V}(x)\cup\mathrm{V}(y)$, and, since $\mathrm{V}(p)$ is irreducible, we have $\mathrm{V}(p)\subseteq\mathrm{V}(x)$ or $\mathrm{V}(p)\subseteq\mathrm{V}(y)$. If the first of these inclusions holds, then, for every prime $q$, we have $p\leqslant q\Rightarrow x\leqslant q$; and, since $p$ is a radical element, this gives $x\leqslant p$. Similarly, the second inclusion implies $y\leqslant p$. That is, $p$ is prime. Since $p$ is prime, $\mathrm{V}(p)$ is the closure of $\{p\}$.
\end{proof}

\section{Some relevant categories and functors}

Let $X$ and $Y$ be complete lattices. Considering $X$ and $Y$ as categories we can speak of an adjunction $(f,u):X\to Y$, that is, order preserving maps
\begin{equation*}
\xymatrix{X\ar@<0.5ex>[r]^-f&Y\ar@<0.5ex>[l]^-u\,\,\,\text{with}\,\,\,f(x)\leqslant y\Leftrightarrow x\leqslant u(y)}
\end{equation*}
for all $x\in X$ and $y\in Y$. Such adjunctions between lattices are known under various names, ``covariant Galois connections'', and others. It is also well known (independently of the multiplicative structure) that:
\begin{itemize}
	\item $f$ and $u$ completely determine each other;
	\item $f$ preserves arbitrary joins, and, conversely, any join preserving map\\ $f:X\to Y$ is the first component of such an adjunction;
	\item $u$ preserves arbitrary meets, and, conversely, any meet preserving map\\ $u:Y\to X$ is the second component of such an adjunction.
\end{itemize}
\begin{defi}\emph{
	 An adjunction $(f,u):X\to Y$ between complete multiplicative lattices is said to be \textit{compatible} if $f(1)=1$ and $f(x)f(x')\leqslant f(xx')$ for all $x,x'\in X$. The category $\mathsf{CML}$ of complete multiplicative lattices is defined as the category whose morphisms are compatible adjunctions. Furthemore, we will say that the adjunction above is \textit{strictly compatible} if $f(x)f(x')=f(xx')$ for all $x,x'\in X$, and the corresponding wide subcategory of $\mathsf{CML}$ will be denoted by $\mathsf{CML}_\mathrm{s}$.}    
\end{defi}
\begin{teo}
	Let $(f,u):X\to Y$ be a morphism in $\mathsf{CML}$. Then:
	\begin{itemize}
		\item [(a)] if $p$ is a prime in $Y$, then $u(p)$ is prime in $X$;
		\item [(b)] the map $\mathrm{Spec}(f,u):\mathrm{Spec}(Y)\to\mathrm{Spec}(X)$ defined by $\mathrm{Spec}(f,u)(p)=u(p)$ is continuous, moreover, $\mathrm{Spec}(f,u)^{-1}(\mathrm{V}(x))=\mathrm{V}(f(x))$ for each $x\in X$;
		\item [(c)] the assignment above determines a functor $\mathrm{Spec}:\mathsf{CML}^{\mathrm{op}}\to\mathsf{STop}$, where $\mathsf{STop}$ is the category of sober topological spaces.
	\end{itemize}
\end{teo}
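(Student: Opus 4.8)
The plan is to establish (a) first, deduce (b) as an immediate consequence, and then assemble (c) from functoriality of the assignment together with Lemma~2.7 (soberness of $\mathrm{Spec}$). Everything is a direct computation; the only genuine content sits in (a), where the two defining conditions of a compatible adjunction are each used exactly once.

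For (a), let $p$ be prime in $Y$. First I would check $u(p)\neq 1$: by the adjunction, $1\leqslant u(p)\Leftrightarrow f(1)\leqslant p$, and since $f(1)=1$ and $p\neq 1$ the right-hand side fails, so $u(p)\neq 1$. Next, suppose $xx'\leqslant u(p)$ in $X$. By the adjunction this is equivalent to $f(xx')\leqslant p$, and compatibility gives $f(x)f(x')\leqslant f(xx')\leqslant p$; since $p$ is prime, $f(x)\leqslant p$ or $f(x')\leqslant p$, i.e.\ (adjunction again) $x\leqslant u(p)$ or $x'\leqslant u(p)$. Hence $u(p)$ is prime.

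For (b), the map $p\mapsto u(p)$ is well defined by (a). For any $x\in X$,
\[
\mathrm{Spec}(f,u)^{-1}(\mathrm{V}(x))=\{p\in\mathrm{Spec}(Y)\mid x\leqslant u(p)\}=\{p\in\mathrm{Spec}(Y)\mid f(x)\leqslant p\}=\mathrm{V}(f(x)),
\]
the middle equality being the adjunction; since the closed sets of $\mathrm{Spec}(X)$ are precisely the $\mathrm{V}(x)$ and $\mathrm{V}(f(x))$ is closed in $\mathrm{Spec}(Y)$, the map $\mathrm{Spec}(f,u)$ is continuous.

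For (c), I would recall that adjunctions between complete lattices compose: if $(f,u):X\to Y$ and $(g,v):Y\to Z$ are adjunctions, then so is $(gf,uv):X\to Z$ (from $gf(x)\leqslant z\Leftrightarrow f(x)\leqslant v(z)\Leftrightarrow x\leqslant uv(z)$), and the composite of compatible adjunctions is compatible since $gf(1)=g(1)=1$ and $gf(x)\,gf(x')\leqslant g(f(x)f(x'))\leqslant g(f(xx'))=gf(xx')$ using monotonicity of $g$ and compatibility of each factor. Consequently $\mathrm{Spec}(gf,uv)(p)=u(v(p))=\mathrm{Spec}(f,u)\bigl(\mathrm{Spec}(g,v)(p)\bigr)$ and $\mathrm{Spec}(\mathrm{id}_X)=\mathrm{id}_{\mathrm{Spec}(X)}$, which is exactly contravariant functoriality; the target lands in $\mathsf{STop}$ by Lemma~2.7. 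There is no real obstacle here — the one thing to be careful about throughout is keeping the direction of the adjunction straight, since it is $u$, not $f$, that is used to transport prime elements.
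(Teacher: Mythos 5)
Your proposal is correct and follows essentially the same route as the paper: the same adjunction-plus-compatibility computation for (a), the same chain of equalities for $\mathrm{Spec}(f,u)^{-1}(\mathrm{V}(x))=\mathrm{V}(f(x))$ in (b), and (c) obtained from (b) together with the soberness lemma (Lemma 2.6 in the paper's numbering). The only difference is that you spell out the composition of compatible adjunctions, which the paper leaves implicit in the definition of $\mathsf{CML}$ as a category; that is a harmless and correct elaboration.
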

\begin{proof}
	(a): For $x,x'\in X$, we have:
	\begin{equation*}
	xx'\leqslant u(p)\Leftrightarrow f(xx')\leqslant p\Rightarrow f(x)f(x')\leqslant p
	\end{equation*}
	\begin{equation*}
	\Rightarrow(f(x)\leqslant p\,\,\text{or}\,\,f(x')\leqslant p)\Leftrightarrow(x\leqslant u(p)\,\,\text{or}\,\,x'\leqslant u(p)),
	\end{equation*}
	and $u(p)\neq1$ since 
	\begin{equation*}
	u(p)=1\Leftrightarrow1\leqslant u(p)\Leftrightarrow f(1)\leqslant p\Leftrightarrow 1\leqslant p\Leftrightarrow p=1.
	\end{equation*}
	
	(b): For $x\in X$, we have: 
    \begin{equation*}
    \mathrm{Spec}(f,u)^{-1}(\mathrm{V}(x))=\{p\in\mathrm{Spec}(Y)\,|\,u(p)\in\mathrm{V}(x)\}
    \end{equation*}
    \begin{equation*}
    =\{p\in\mathrm{Spec}(Y)\,|\,x\leqslant u(p)\}=\{p\in\mathrm{Spec}(Y)\,|\,f(x)\leqslant p\}=\mathrm{V}(f(x)).
    \end{equation*}

    (c) follows from Lemma 2.6 and (b).
\end{proof}
\begin{example}\emph{
	$L$ is a quantale (see \cite{[R1990]}, which refers to \cite{[M1986]}) if and only if its multiplication is:
	\begin{itemize}
		\item [(a)] associative, that is, $x(yz)=(xy)z$ for all $x,y,z\in L$;
		\item [(b)] infinitary-distributive, that is $x(\bigvee S)=\bigvee_{s\in S}xs$ and $(\bigvee S)x=\bigvee_{s\in S}sx$ for all $x\in L$ and $S\subseteq L$.
	\end{itemize}
	Note, however, that the inequality $xy\leqslant x\wedge y$ we required is not required for quantales in general; it holds if and only if all elements of the quantale are two-sided (in the terminology of quantale theory, where it means that $1x=x=x1$ for all $x\in L$). When $L$ and $M$ are quantales, to say that $(f,u):L\to M$ is a compatible adjunction is the same as to say that $f:L\to M$ is a closed sup-map of quantales with $f(1)=1$; and then to say that $(f,u):L\to M$ is strictly compatible is the same as to say that $f:L\to M$ is a homomorphism of quantales (see \cite{[R1990]}) with $f(1)=1$. In particular, when $L$ and $M$ are frames (=locales), that is, when their multiplication coincides with the meet operation, all the compatible adjunctions between them become strictly compatible, and they become the same as frame homomorphisms.}   
\end{example}
\begin{lem}
	$\sqrt{x}\wedge\sqrt{y}=\sqrt{xy}$ for all $x,y\in L$.
\end{lem}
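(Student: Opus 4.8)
The plan is to prove the two inclusions $\sqrt{xy}\leqslant\sqrt{x}\wedge\sqrt{y}$ and $\sqrt{x}\wedge\sqrt{y}\leqslant\sqrt{xy}$ separately, using the basic properties of $\mathrm{V}$ and $\sqrt{-}$ established earlier, and in particular Proposition 2.5.

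For the first inclusion, I would observe that $xy\leqslant x\wedge y\leqslant x$ and likewise $xy\leqslant y$, so by monotonicity of the radical closure operator $\sqrt{xy}\leqslant\sqrt{x}$ and $\sqrt{xy}\leqslant\sqrt{y}$, hence $\sqrt{xy}\leqslant\sqrt{x}\wedge\sqrt{y}$. This direction only uses the defining inequality $xy\leqslant x\wedge y$ of a complete multiplicative lattice together with the fact that $\sqrt{-}$ is a closure operator, and it is routine.

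For the reverse inclusion, the cleanest route is through the topology. From $\mathrm{V}(xy)=\mathrm{V}(x)\cup\mathrm{V}(y)$, I would argue directly: if $p\in\mathrm{Spec}(L)$ and $xy\leqslant p$, then (since $p$ is prime) $x\leqslant p$ or $y\leqslant p$, so $\sqrt{x}\leqslant p$ or $\sqrt{y}\leqslant p$ because $p$, being prime, is in particular a radical element and dominates $x$ (resp. $y$); in either case $\sqrt{x}\wedge\sqrt{y}\leqslant p$. Thus every prime above $xy$ is above $\sqrt{x}\wedge\sqrt{y}$, i.e. $\mathrm{V}(xy)\subseteq\mathrm{V}(\sqrt{x}\wedge\sqrt{y})$; equivalently, since the reverse containment is clear from $\sqrt{x}\wedge\sqrt{y}\geqslant$ nothing useful—so instead I use that $\sqrt{x}\wedge\sqrt{y}$ is a meet of primes (an intersection of two families of primes is again such a family), hence a radical element, and it lies below every prime $p\geqslant xy$. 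Since $\sqrt{xy}$ is by definition the \emph{smallest} radical element above $xy$... that gives the wrong direction, so the correct phrasing is: $\sqrt{x}\wedge\sqrt{y}$ is a radical element, and it equals the meet of all primes containing it, which by the above includes all primes containing $xy$; therefore $\sqrt{x}\wedge\sqrt{y}\leqslant\bigwedge\{p\in\mathrm{Spec}(L)\mid xy\leqslant p\}=\sqrt{xy}$, where the last equality holds because $\sqrt{xy}$ is a radical element above $xy$ and hence a meet of primes each $\geqslant xy$, while conversely that meet is a radical element above $xy$ so is $\geqslant\sqrt{xy}$, giving equality.

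The main obstacle is getting the direction of the inequalities straight: one must use that $\sqrt{z}=\bigwedge\mathrm{V}(z)$ (the meet of all primes above $z$), which follows from Definition 2.4 and Proposition 2.5, rather than trying to manipulate $\sqrt{-}$ formally. Once one has $\sqrt{z}=\bigwedge\mathrm{V}(z)$ for all $z$, the identity is immediate: $\sqrt{xy}=\bigwedge\mathrm{V}(xy)=\bigwedge(\mathrm{V}(x)\cup\mathrm{V}(y))=\bigl(\bigwedge\mathrm{V}(x)\bigr)\wedge\bigl(\bigwedge\mathrm{V}(y)\bigr)=\sqrt{x}\wedge\sqrt{y}$. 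I would present the proof in exactly this compressed form, with a one-line justification of $\sqrt{z}=\bigwedge\mathrm{V}(z)$ up front.
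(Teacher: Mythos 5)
Your proposal is correct and, despite the meandering middle, lands on essentially the same argument as the paper: the easy inclusion from $xy\leqslant x\wedge y$ and monotonicity of $\sqrt{-}$, and the reverse inclusion by showing $\sqrt{x}\wedge\sqrt{y}\leqslant p$ for every prime $p$ with $xy\leqslant p$, using that $\sqrt{xy}$ is the meet of all such primes. Your final compressed version via $\sqrt{z}=\bigwedge\mathrm{V}(z)$ and $\mathrm{V}(xy)=\mathrm{V}(x)\cup\mathrm{V}(y)$ is just a clean repackaging of the same ingredients, since that $\mathrm{V}$-identity is itself proved from primality and $xy\leqslant x\wedge y$.
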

\begin{proof}
	$xy\leqslant x\wedge y$ gives $xy\leqslant x$ and $xy\leqslant y$ and then $\sqrt{xy}\leqslant \sqrt{x}$ and $\sqrt{xy}\leqslant \sqrt{y}$. Therefore $\sqrt{xy}\leqslant \sqrt{x}\wedge\sqrt{y}$. To prove the opposite inequality it suffices to prove that $\sqrt{x}\wedge\sqrt{y}\leqslant p$ for every prime $p$ with $xy\leqslant p$. But the last inequality gives $x\leqslant p$ or $y\leqslant p$, and then $\sqrt{x}\leqslant p$ or $\sqrt{y}\leqslant p$.
\end{proof}
\begin{remark}\emph{
	Let $\sqrt{L}$ be the ordered set of all radical elements of $L$. Then:
	\begin{itemize}
		\item [(a)] As follows from Proposition 2.5, the assignment $x\mapsto-\mathrm{V}(x)$ determines an isomorphism between $\sqrt{L}$ and the complete lattice $\Omega(\mathrm{Spec}(L))$ of open subsets of $\mathrm{Spec}(L)$. In particular, $\sqrt{L}$ is a frame, and we will consider it as a complete multiplicative lattice whose multiplication is the meet operation.
		\item [(b)] For $x\in L$ and $y\in \sqrt{L}$, we have $\sqrt{x}\leqslant y\Leftrightarrow x\leqslant y$, which determines the adjunction $(\rho,\iota):L\to\sqrt{L}$, defined by $\rho(x)=\sqrt{x}$ and (accordingly) $\iota(y)=y$. This adjunction is strictly compatible, which follows from Lemma 3.4.  
		\item [(c)] Let us compare the sets $\mathrm{Spec}(L)$ and $\mathrm{Spec}(\sqrt{L})$. If $p$ is a prime element in $L$, then $p=\sqrt{p}$, and it is a prime element in $\mathrm{Spec}(\sqrt{L})$. Indeed, if $x,y\in\mathrm{Spec}(\sqrt{L})$ have $x\wedge y\leqslant p$ in $\mathrm{Spec}(\sqrt{L})$, then $xy\leqslant x\wedge y\leqslant p$ in $L$, and so $x\leqslant p$ or $x\leqslant p$. Conversely, if $p$ is a prime element in $\sqrt{L}$, then it is a prime element in $L$, as follows from Theorem 3.2(a) applied to $(\rho,\iota):L\to\sqrt{L}$. That is, $\mathrm{Spec}(L)$ and $\mathrm{Spec}(\sqrt{L})$ are the same sets. Moreover, the first equality of Proposition 2.5 tells us that they are the same topological spaces. Furthermore, the identity map $\mathrm{Spec}(\sqrt{L})\to\mathrm{Spec}(L)$ is exactly the image of the morphism $(\rho,\iota):L\to\sqrt{L}$ under the functor $\mathrm{Spec}:\mathsf{CML}^{\mathrm{op}}\to\mathsf{STop}$, as follows from the fact that $\iota:\sqrt{L}\to L$ is the inclusion map. 
		\item [(d)] However, $\sqrt{L}$ is not necessarily closed under the multiplication in $L$. For example, let $L$ be a three-element monoid of the form $\{1,x,x^2\}$ with $x^3=x^2$ and ordered by $x^2<x<1$. Then $\sqrt{L}=\{1,x\}$ and
		\begin{equation*}
		\mathrm{Spec}(L)=\{x\}=\mathrm{Spec}(\sqrt{L}),
		\end{equation*}
		but $x^2$ is missing in $\sqrt{L}$.  
	\end{itemize}}
\end{remark}

The standard way to associate a sober topological space to a frame is a restriction of our functor $\mathrm{Spec}:\mathsf{CML}^{\mathrm{op}}\to\mathsf{STop}$, and, in particular, our Lemma 2.6 is an immediate consequence of Lemma 1.7 of Chapter II in \cite{[J1982]}, the isomorphism $\sqrt{L}\approx\Omega(\mathrm{Spec}(L))$, and the equality $\mathrm{Spec}(L)=\mathrm{Spec}(\sqrt{L})$. Furthermore, what we learn from Chapter II in \cite{[J1982]} includes the diagram
\begin{equation*}
\xymatrix{\mathsf{Loc}\ar[dr]\ar@<0.5ex>[r]^{\mathrm{pt}}&\mathsf{Top}\ar[dl]\ar@<0.5ex>[l]^\Omega\\\mathsf{SLoc}\ar[u]\ar@<0.5ex>[r]&\mathsf{STop}\ar[u]\ar@<0.5ex>[l]\\\mathsf{CLoc}\ar[u]\ar@<0.5ex>[r]&\mathsf{CTop}\ar[u]\ar@<0.5ex>[l]}
\end{equation*}
in which:
\begin{itemize}
	\item $\mathsf{Loc}$ is the category of locales, which is the same as the opposite category of frames, and which is a full subcategory of $\mathsf{CML}^{\mathrm{op}}$ and of $\mathsf{CML}_{\mathrm{s}}^{\mathrm{op}}$.
	\item $\mathsf{Top}$ is the category of topological spaces.
	\item All the vertical arrows are full subcategory inclusion functors.
	\item The functor $\mathsf{Loc}\to\mathsf{STop}$ is essentially surjective on objects and it can be identified with the restriction of the functor $\mathrm{Spec}:\mathsf{CML}^{\mathrm{op}}\to\mathsf{STop}$ on $\mathsf{Loc}$; and $\mathrm{pt}$, the `functor of points', is its composite with the inclusion functor $\mathsf{STop}\to\mathsf{Top}$. 
	\item $\Omega:\mathsf{Top}\to\mathsf{Loc}$ carries topological spaces to their locales of open sets. The locales that occur this way (up to isomorphism) are called \textit{spatial}, and their category is denoted by $\mathsf{SLoc}$. Therefore $\Omega$ is the composite of the essentially-surjective-on-objects functor $\mathsf{Top}\to\mathsf{SLoc}$ it induces with the inclusion functor $\mathsf{SLoc}\to\mathsf{Loc}$.
	\item $(\Omega,\mathrm{pt})$ forms an adjunction, and the second row of our diagram is the largest category equivalence it induces. That is, the classes of objects of $\mathsf{SLoc}$ and $\mathsf{STop}$ can also be defined using suitable canonical morphism as
	\begin{center} $\{L\in\mathsf{Loc}\,|\,\Omega(\mathrm{pt}(L))\to L$ is a local isomorphism$\}$, and\\ $\{X\in\mathsf{Top}\,|X\to\mathrm{pt}(\Omega(L))$ is a homeomorphism$\}$, \end{center}
	respectively.
	\item Let us recall several definitions:
	
	An element $x$ of a complete lattice is said to be \textit{compact}\footnote{According to \cite{[B1973]}, this notion was first introduced in \cite{[N1949]}, while the closely related notion of \textit{join-inaccessible} (which we will use in the next section) was introduced in \cite{[BF1948]}; \cite{[J1982]} and some other papers say ``finite'' instead of ``compact''.} if every subset $S$ of that lattice with $x\leqslant\bigvee S$ has a finite subset $F$ with $x\leqslant\bigvee F$. The lattice itself is said to be \textit{compact} if its largest element 1 is compact. A complete lattice is said to be \textit{algebraic} if every element in it is a join of compact elements. A spatial locale is said to be \textit{coherent} if it is algebraic and the set of its compact elements forms a sublattice (with $0$ and $1$), or, equivalently, a $\wedge$-subsemilattice (with $1$) in it. A sober topological space is said to be \textit{coherent} if so is the frame $\Omega(X)$; for topological spaces, we will usually say ``spectral'' instead of ``coherent''. Note that in purely topological terms a spectral space is a compact sober topological space in which compact open subsets form a basis of topology that is closed under finite intersections.
	
	$\mathsf{CLoc}$ and $\mathsf{CTop}$ denote the categories of coherent locales and coherent (=spectral) topological spaces, respectively. The third row of our diagram is the category equivalence induced by the equivalence displayed as the second row. Note, however, that there are good reasons to restrict morphisms in $\mathsf{CLoc}$ and in $\mathsf{CTop}$ to so-called coherent ones (see e.g. \cite{[J1982]}).   
\end{itemize}

The following theorem is a consequence of the equivalence between $\mathsf{CLoc}$ and $\mathsf{CTop}$, the isomorphism $\sqrt{L}\approx\Omega(\mathrm{Spec}(L))$, and the equality $\mathrm{Spec}(L)=\mathrm{Spec}(\sqrt{L})$:
\begin{teo}
	$\mathrm{Spec}(L)$ is a spectral space if and only if the following conditions hold:
	\begin{itemize}
		\item [(a)] $\sqrt{L}$ is compact;
		\item [(b)] $\sqrt{L}$ is algebraic;
		\item [(c)] if $x$ and $y$ are compact elements in $\sqrt{L}$, then so is $x\wedge y$.
	\end{itemize}
\end{teo}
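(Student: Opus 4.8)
The plan is to unwind the definitions collected in this section and chain them with the facts already established, so that both directions of the ``if and only if'' collapse into a single translation. By Lemma 2.6, $\mathrm{Spec}(L)$ is always sober; hence, by the definition of ``coherent'' recalled above, $\mathrm{Spec}(L)$ is spectral exactly when the frame $\Omega(\mathrm{Spec}(L))$ of its open subsets is a coherent locale. By Remark 3.5(a), the assignment $x\mapsto-\mathrm{V}(x)$ is an isomorphism of frames $\sqrt{L}\approx\Omega(\mathrm{Spec}(L))$; in particular $\sqrt{L}$ is a spatial locale, being (up to this isomorphism) the frame of open sets of the genuine topological space $\mathrm{Spec}(L)$, and moreover $\mathrm{Spec}(L)=\mathrm{Spec}(\sqrt{L})$ by Remark 3.5(c), so nothing is lost by testing spectrality on the $\sqrt{L}$ side. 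It therefore suffices to prove that $\sqrt{L}$ is a coherent locale if and only if conditions (a), (b), (c) hold.

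For this I would simply invoke the definition: a spatial locale is coherent iff it is algebraic and the set of its compact elements is a sublattice containing $0$ and $1$, equivalently a $\wedge$-subsemilattice containing $1$. Matching the clauses: algebraicity of $\sqrt{L}$ is precisely (b); the requirement that $1\in\sqrt{L}$ be compact is precisely (a), since by definition a complete lattice is compact iff its top element is; and closure of the compact elements of $\sqrt{L}$ under binary meets is precisely (c). The two clauses not appearing in the list are automatic, which is exactly why the ``equivalently'' in the definition holds: $0=\bigvee\emptyset$ is compact in any lattice, and if $x,y$ are compact with $x\leqslant\bigvee S$ and $y\leqslant\bigvee S$, then picking finite $F_1,F_2\subseteq S$ with $x\leqslant\bigvee F_1$, $y\leqslant\bigvee F_2$ gives $x\vee y\leqslant\bigvee(F_1\cup F_2)$, so $x\vee y$ is compact. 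Hence ``$\sqrt{L}$ coherent'' is equivalent to the conjunction of (a), (b), (c), and the theorem follows.

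I do not expect a genuine obstacle here: the mathematical substance is carried entirely by the two imported facts — the equivalence $\mathsf{CLoc}\simeq\mathsf{CTop}$, which lets spectrality be detected on the frame of opens, and the identification $\sqrt{L}\approx\Omega(\mathrm{Spec}(L))$ of Remark 3.5(a) — so the work is purely one of bookkeeping. The only step that deserves a moment's attention is the last one, namely checking that the three listed conditions really exhaust the definition of ``coherent'' even though that definition nominally demands a full sublattice of compact elements; once the redundancy of ``$0$ compact'' and ``closed under finite joins'' is noted, the proof is a direct dictionary lookup.
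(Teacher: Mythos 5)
Your proposal is correct and follows exactly the route the paper takes: its proof of this theorem is precisely the one-line citation of the equivalence $\mathsf{CLoc}\simeq\mathsf{CTop}$, the isomorphism $\sqrt{L}\approx\Omega(\mathrm{Spec}(L))$, and the equality $\mathrm{Spec}(L)=\mathrm{Spec}(\sqrt{L})$, with the three listed conditions read off from the definition of a coherent spatial locale. Your additional check that ``$0$ compact'' and ``closed under finite joins'' are automatic is a harmless elaboration of the ``equivalently'' already built into that definition.
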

\begin{example}\emph{
	Let $\mathrm{Com}(L)$ be the complete multiplicative lattice obtained from $L$ by taking the same complete lattice and replacing the multiplication of $L$ with the multiplication $*$ defined by $x*y=xy\vee yx$. Since $xy\leqslant x*y$, this gives a compatible adjunction $(f,u):\mathrm{Com}(L)\to L$, in which $f$ and $u$ are the identity maps; informally $f=1_L=u$. Furthermore, the following conditions are obviously equivalent:
	\begin{itemize}
		\item [(a)] the adjunction above is strictly compatible;
		\item [(b)] the adjunction above is an isomorphism in $\mathsf{CML}$ (or, equivalently, in $\mathsf{CML}_\mathrm{s}$);
		\item [(c)] $\mathrm{Com}(L)=L$.
		\item [(d)] the multiplication of $L$ is commutative.
	\end{itemize}   
	Note also that if $L$ satisfies the \textit{monotonicity condition}
	\begin{equation*}
	(x\leqslant y\,\,\&\,\,x'\leqslant y')\Rightarrow xx'\leqslant yy'
	\end{equation*}
	(for all $x,y\in L$), then for $p\in L$, we have:
	\begin{equation*}
	p\,\,\text{is prime in}\,\,L\Leftrightarrow p\,\,\text{is prime in}\,\,\mathrm{Com}(L).
	\end{equation*} 
	The implication ``$\Rightarrow$'' is obvious, while the implication `` $\Leftarrow$'' can be proved as follows:}
	
	\emph{Suppose $p$ is prime in $\mathrm{Com}(L)$ and $xy\leqslant p$. Then we have:
	\begin{equation*}
	yx*yx=(yx)(yx)\vee(yx)(yx)\leqslant xy\vee xy=xy\leqslant p,
	\end{equation*}
	and so $yx\leqslant p$, after which we can write $x*y=xy\vee yx\leqslant p$ and conclude that $x\leqslant p$ or $y\leqslant p$.}
	
	\emph{Furthermore, the equality $\mathrm{Spec}(L)=\mathrm{Spec}(\mathrm{Com}(L))$ tells us that $\mathrm{Spec}(f,u)$ (where $(f,u):\mathrm{Com}(L)\to L$) is as above) is the identity homeomorphism.}
\end{example}

\section{Involving algebraic radicals}

\begin{defi}\emph{
	We will say that $L$ \textit{has algebraic radicals} if $\sqrt{-}:L\to L$ is an algebraic closure operator, that is, if it preserves directed joins.}
\end{defi}
\begin{prop}
	Suppose $L$ has algebraic radicals. If $x$ is a compact element of $L$, then $\sqrt{x}$ is a compact element of $\sqrt{L}$. In particular, if $L$ is compact, then so is $\sqrt{L}$.
\end{prop}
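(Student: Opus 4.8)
The plan is to unwind the definition of compactness in $\sqrt{L}$, the one subtle point being that joins in $\sqrt{L}$ are not the joins in $L$: for a subset $S\subseteq\sqrt{L}$, its join computed in $\sqrt{L}$ is $\sqrt{\bigvee S}$, where $\bigvee S$ denotes the join in $L$ (this is immediate from the fact that $\sqrt{-}$ is a closure operator with $\sqrt{L}$ as its set of closed elements, and it also underlies the isomorphism $\sqrt{L}\approx\Omega(\mathrm{Spec}(L))$ of Remark 3.5(a)). The meet operation of $\sqrt{L}$, on the other hand, agrees with that of $L$.

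So let $x$ be a compact element of $L$, let $S\subseteq\sqrt{L}$, and suppose $\sqrt{x}\leqslant\sqrt{\bigvee S}$, the right-hand side being the join of $S$ taken in $\sqrt{L}$. First I would pass to the set $T$ of all finite joins (formed in $L$) of elements of $S$: then $T$ is up-directed, $\bigvee T=\bigvee S$ in $L$, and $\{\sqrt{t}\mid t\in T\}$ is up-directed since $\sqrt{-}$ is monotone. Since $L$ has algebraic radicals, $\sqrt{-}$ preserves this directed join, so $\sqrt{\bigvee S}=\sqrt{\bigvee T}=\bigvee_{t\in T}\sqrt{t}$, a directed join in $L$. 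Now $x\leqslant\sqrt{x}\leqslant\bigvee_{t\in T}\sqrt{t}$, so compactness of $x$ in $L$ gives a finite $F\subseteq T$ with $x\leqslant\bigvee_{t\in F}\sqrt{t}$; as $T$ is directed, $\{\sqrt{t}\mid t\in F\}$ has an upper bound $\sqrt{t_{0}}$ with $t_{0}\in T$, whence $x\leqslant\sqrt{t_{0}}$. Writing $t_{0}=s_{1}\vee\cdots\vee s_{n}$ with $s_{1},\dots,s_{n}\in S$, we get $\sqrt{x}\leqslant\sqrt{\sqrt{t_{0}}}=\sqrt{t_{0}}=\sqrt{s_{1}\vee\cdots\vee s_{n}}$, and this last element is exactly the join in $\sqrt{L}$ of the finite subset $\{s_{1},\dots,s_{n}\}\subseteq S$. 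Hence $\sqrt{x}$ is compact in $\sqrt{L}$.

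For the ``in particular'' clause I would apply what has just been proved to $x=1$: the top element $1$ lies in $\sqrt{L}$ (so $\sqrt{1}=1$) and is the top element of $\sqrt{L}$. If $L$ is compact, i.e. $1$ is compact in $L$, then by the first part $1=\sqrt{1}$ is compact in $\sqrt{L}$, which says precisely that $\sqrt{L}$ is compact.

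The only step that requires genuine care — the ``main obstacle'', such as it is — is keeping the joins of $L$ and of $\sqrt{L}$ apart and recognizing that the hypothesis ``$L$ has algebraic radicals'' is exactly what lets one rewrite $\sqrt{\bigvee S}$ as a \emph{directed} join in $L$, to which the compactness of $x$ in $L$ can then be applied; the passage to the directed set $T$ of finite joins is the routine device that makes this go through.
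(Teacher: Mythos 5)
Your proof is correct and follows essentially the same route as the paper's: identify the join in $\sqrt{L}$ as $\sqrt{\bigvee S}$, replace $S$ by the directed set $T$ of finite joins, use the algebraic-radicals hypothesis to write $\sqrt{\bigvee T}=\bigvee_{t\in T}\sqrt{t}$, and then apply compactness of $x$ together with directedness to land on a single $t_0\in T$. The only cosmetic difference is that you spell out the step from a finite $F\subseteq T$ to a single upper bound $\sqrt{t_0}$, which the paper compresses into one sentence.
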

\begin{proof}
	Since $\sqrt{-}$ is a closure operator, the join in $\sqrt{L}$ of a subset $S$ of $\sqrt{L}$ is the same as $\sqrt{\bigvee S}$, where $\bigvee S$ is the join of $S$ in $L$. Therefore what we need to prove (for a compact $x$ in $L$) is that for every $S\subseteq\sqrt{L}$ with $\sqrt{x}\leqslant\sqrt{\bigvee S}$ there exists a finite subset $F$ of $S$ with $\sqrt{x}\leqslant\sqrt{\bigvee F}$. For, we take $T$ to be the set of all finite joins of elements of $S$, which makes $T$ a directed set whose join is the same as the join of $S$. This gives
	\begin{equation*}
	\sqrt{x}\leqslant\sqrt{\bigvee S}=\sqrt{\bigvee T}=\bigvee_{t\in T}\sqrt{t}.
	\end{equation*}
	Since $x\leqslant\sqrt{x}$, $x$ is compact, and $T$ is closed under finite joins, we easily conclude that $x\leqslant\sqrt{t}$ and then that $\sqrt{x}\leqslant\sqrt{t}$ for some $t\in T$. This gives the desired inequality for $F$ being any finite set of elements of $S$ whose join is $t$.  
\end{proof}
From this proposition and the isomorphism $\sqrt{L}\approx\Omega(\mathrm{Spec}(L))$, we obtain:
\begin{cor}
	Suppose $L$ has algebraic radicals. If $x$ is a compact element of $L$, then $-\mathrm{V}(x)$ is a compact open subset of $\mathrm{Spec}(L)$. In particular, if $L$ is compact, then so is $\mathrm{Spec}(L)$.\qed
\end{cor}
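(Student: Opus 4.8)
The plan is to obtain both assertions by transporting Proposition 4.2 through the frame isomorphism $\sqrt{L}\approx\Omega(\mathrm{Spec}(L))$ of Remark 3.5(a). Two elementary transfer facts will be needed, and I would record them first. The first is that compactness of an element of a complete lattice is formulated purely in terms of the join operation, so every isomorphism of complete lattices carries compact elements to compact elements; applied to the isomorphism $r\mapsto-\mathrm{V}(r)$, this means that the compact elements of $\sqrt{L}$ correspond exactly to the compact elements of $\Omega(\mathrm{Spec}(L))$. The second is that, for any topological space $X$, an open set $U$ is a compact element of the frame $\Omega(X)$ if and only if $U$ is compact as a subspace: joins in $\Omega(X)$ are unions, and the open subsets of the subspace $U$ are exactly the traces of open subsets of $X$, so the two cover-refinement conditions coincide.

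Granting these, I would argue as follows. Assume $L$ has algebraic radicals and $x$ is compact in $L$. By Proposition 4.2, $\sqrt{x}$ is a compact element of $\sqrt{L}$. Its image under the isomorphism of Remark 3.5(a) is $-\mathrm{V}(\sqrt{x})$, which equals $-\mathrm{V}(x)$ by the first equality of Proposition 2.5. By the first transfer fact $-\mathrm{V}(x)$ is a compact element of $\Omega(\mathrm{Spec}(L))$, and by the second it is a compact open subset of $\mathrm{Spec}(L)$, as claimed.

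For the final sentence, assume in addition that $L$ is compact, so that $1$ is a compact element of $L$. Applying the part just proved with $x=1$ gives that $-\mathrm{V}(1)=\mathrm{Spec}(L)$ is a compact open subset of $\mathrm{Spec}(L)$, i.e. that $\mathrm{Spec}(L)$ is a compact space. (Equivalently, one may invoke the second clause of Proposition 4.2 to conclude that $\sqrt{L}$ is compact, and then transport the compactness of the top element through the isomorphism.)

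There is essentially no obstacle: the content of the corollary is Proposition 4.2, and the two transfer facts above are immediate from the definitions of compact element and compact space. The only point where a moment's care is warranted is keeping track of which hypotheses of Proposition 4.2 are in force, which is harmless here since the standing assumption throughout this section is that $L$ has algebraic radicals.
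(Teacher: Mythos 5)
Your proposal is correct and follows exactly the route the paper intends: the corollary is stated as an immediate consequence of Proposition 4.2 and the isomorphism $\sqrt{L}\approx\Omega(\mathrm{Spec}(L))$ of Remark 3.5(a), and you have merely made explicit the two routine transfer facts (isomorphisms preserve compact elements; compact elements of $\Omega(X)$ are compact open subspaces) together with $\mathrm{V}(\sqrt{x})=\mathrm{V}(x)$. The only nitpick is that ``algebraic radicals'' is not a standing assumption of the section but an explicit hypothesis of Proposition 4.2 and of the corollary itself, which you do in any case invoke where needed.
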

\begin{teo}
	$\mathrm{Spec}(L)$ is a spectral space whenever the following conditions hold:
	\begin{itemize}
		\item [(a)] $L$ is compact;
		\item [(b)] $L$ is algebraic;
		\item [(c)] $L$ has algebraic radicals;
		\item [(d)] if $x$ and $y$ are compact elements in $L$, then there exists a compact $c\in L$ with $\sqrt{c}=\sqrt{xy}$.
	\end{itemize}
\end{teo}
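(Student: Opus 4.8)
The plan is to derive the statement from Theorem 3.6: it suffices to show that conditions (a)--(d) force $\sqrt{L}$ to be compact, $\sqrt{L}$ to be algebraic, and the compact elements of $\sqrt{L}$ to be closed under binary meet. Throughout I would use two generalities about $\sqrt{L}$: first, $\sqrt{L}$ is closed under arbitrary meets taken in $L$ (a meet of meets of primes is a meet of primes), so meets in $\sqrt{L}$ coincide with meets in $L$; second, as already noted in the proof of Proposition 4.2, the join in $\sqrt{L}$ of a family $S\subseteq\sqrt{L}$ equals $\sqrt{\bigvee S}$, where $\bigvee S$ is computed in $L$.

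Condition 3.6(a) is immediate: by (a) and (c), Proposition 4.2 applies and gives that $\sqrt{L}$ is compact. The main technical point, and the step I expect to be the real obstacle, is to identify the compact elements of $\sqrt{L}$ as exactly the elements of the form $\sqrt{x}$ with $x$ compact in $L$. One inclusion is again Proposition 4.2 (using (c)). For the converse, let $s$ be compact in $\sqrt{L}$; since $L$ is algebraic by (b), $s=\bigvee_L\{x\mid x\text{ compact in }L,\ x\leqslant s\}$, so $s=\sqrt{s}$ is the join in $\sqrt{L}$ of the family $\{\sqrt{x}\mid x\text{ compact in }L,\ x\leqslant s\}$. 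Compactness of $s$ in $\sqrt{L}$ then yields finitely many compact $x_1,\dots,x_k\leqslant s$ with $s\leqslant\sqrt{x_1}\vee\dots\vee\sqrt{x_k}=\sqrt{x_1\vee\dots\vee x_k}$; setting $x=x_1\vee\dots\vee x_k$ (compact in $L$ as a finite join of compacts, and $\leqslant s$) gives $\sqrt{x}\leqslant\sqrt{s}=s\leqslant\sqrt{x}$, hence $s=\sqrt{x}$.

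Granting this identification, the remaining two conditions follow easily. For 3.6(b), given $r\in\sqrt{L}$, algebraicity of $L$ gives $r=\bigvee_L\{x\mid x\text{ compact in }L,\ x\leqslant r\}$, so $r=\sqrt{r}$ is the join in $\sqrt{L}$ of the family $\{\sqrt{x}\mid x\text{ compact in }L,\ x\leqslant r\}$, a family of compact elements of $\sqrt{L}$ by the identification; thus $\sqrt{L}$ is algebraic. For 3.6(c), let $u,v$ be compact in $\sqrt{L}$; by the identification $u=\sqrt{x}$ and $v=\sqrt{y}$ for some $x,y$ compact in $L$, and since meets in $\sqrt{L}$ agree with meets in $L$, Lemma 3.4 gives $u\wedge v=\sqrt{x}\wedge\sqrt{y}=\sqrt{xy}$. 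By (d) there is a compact $c\in L$ with $\sqrt{c}=\sqrt{xy}$, so $u\wedge v=\sqrt{c}$ is compact in $\sqrt{L}$ by Proposition 4.2. This verifies all three hypotheses of Theorem 3.6, so $\mathrm{Spec}(L)$ is spectral.

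I would emphasize that conditions (a) and (c) enter only through Proposition 4.2, condition (b) is used both to express elements of $L$ as joins of compacts and in pinning down the compact elements of $\sqrt{L}$, and condition (d) is used solely at the last step, to convert the meet $\sqrt{x}\wedge\sqrt{y}=\sqrt{xy}$ back into the radical of a compact element.
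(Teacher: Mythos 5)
Your proof is correct, but it takes a genuinely different route from the paper's. The paper does not pass through Theorem 3.6 at all: it verifies the purely topological characterization of spectral spaces directly, showing that $\mathrm{Spec}(L)$ is sober (Lemma 2.6) and compact (via (a), (c) and Corollary 4.3), that the sets $-\mathrm{V}(x)$ with $x$ compact in $L$ are compact open and form a basis (via (b), (c) and Corollary 4.3), and that this basis is closed under finite intersections via the chain $-\mathrm{V}(x)\cap-\mathrm{V}(y)=-\mathrm{V}(xy)=-\mathrm{V}(\sqrt{xy})=-\mathrm{V}(\sqrt{c})=-\mathrm{V}(c)$ using (d). You instead check the three lattice-theoretic conditions of Theorem 3.6 on $\sqrt{L}$. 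The ingredients are the same (Proposition 4.2/Corollary 4.3, Lemma 3.4, and condition (d)), but your route forces one extra piece of work that the paper's avoids: since condition 3.6(c) quantifies over \emph{all} compact elements of $\sqrt{L}$, you must prove that every compact element of $\sqrt{L}$ has the form $\sqrt{x}$ with $x$ compact in $L$ --- the paper only needs a \emph{basis} of compact opens closed under finite intersections, so it never has to describe all compact opens. Your identification lemma is correctly proved (the computations of joins in $\sqrt{L}$ as $\sqrt{\bigvee S}$ and the reduction to a finite join of compacts are all sound), and it is a worthwhile by-product in its own right: it makes explicit that the compact open subsets of $\mathrm{Spec}(L)$ are exactly the $-\mathrm{V}(x)$ with $x$ compact in $L$, which the paper's argument leaves implicit. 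The trade-off is that the paper's proof is shorter and more self-contained topologically, while yours exhibits Theorem 4.4 as literally an instance of the ``if'' part of Theorem 3.6, which meshes nicely with Remark 4.5.
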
    
\begin{proof}
	Indeed:
	\begin{itemize}
		\item $\mathrm{Spec}(L)$ is sober by Lemma 2.6.
		\item The fact that $\mathrm{Spec}(L)$ is compact follows from (a), (c), and Corollary 4.3. 
		\item For every compact $x$, $-\mathrm{V}(x)$ is a compact open subset of $\mathrm{Spec}(L)$ by (c) and Corollary 4.3.
	\end{itemize}
	After that it remains to prove that:
	\begin{itemize}
		\item If $x$ and $y$ are compact, then $-\mathrm{V}(x)\cap-\mathrm{V}(y)$ is a compact open subset of $\mathrm{Spec}(L)$.
		\item Every open subset of $\mathrm{Spec}(L)$ is a union of subsets of the form $-\mathrm{V}(x)$ with compact $x$. 
	\end{itemize}
	The first of these assertions follows from
	\begin{equation*}
	-\mathrm{V}(x)\cap-\mathrm{V}(y)=-(\mathrm{V}(x)\cup\mathrm{V}(y))=-\mathrm{V}(xy)=-\mathrm{V}(\sqrt{xy})=-\mathrm{V}(\sqrt{c})=-\mathrm{V}(c)
	\end{equation*}
	(where $c$ is as in (d)) and Corollary 4.3. The second one follows from the fact that $x=\bigvee S$ implies
	\begin{equation*}
		-\mathrm{V}(x)=-\mathrm{V}(\bigvee S)=-\bigcap_{s\in S}\mathrm{V}(s)=\bigcup_{s\in S}(-\mathrm{V}(s)), 
	\end{equation*}
	(b), and Corollary 4.3 again. 
\end{proof}
\begin{remark}\emph{
	Applying Theorem 4.4 to $\sqrt{L}$ and having in mind that $\mathrm{Spec}(\sqrt{L})=\mathrm{Spec}(L)$ and $\sqrt{x}=x$ for all $x\in\sqrt{L}$, we obtain exactly the ``if'' part of Theorem 3.6. However, unlike Theorem 3.6, Theorem 4.4 gives only sufficient conditions for the space $\mathrm{Spec}(L)$ to be spectral. Indeed, take $L$ to be any complete lattice with $xy=0$ for all $x,y\in L$. Then $\mathrm{Spec}(L)$ being empty is trivially spectral. In this case conditions (a) and (b) of Theorem 4.4 do not hold in general of course.}  
\end{remark}

\section{How to get enough primes?}

The results of this section are very simple and might be called known, at least in special cases; nevertheless we state and prove them not having a convenient reference. ``Known'' especially applies to Definition 5.1, Proposition 5.2, and the text between them, since the multiplication of $L$ plays no role there.
\begin{defi}\emph{
	An element $m\neq1$ in $L$ is said to be
	\begin{itemize}
		\item [(a)] \textit{maximal}, if $m\neq 1$ and $m$ satisfies the implication
		\begin{equation*}
		m<x\Rightarrow x=1;
		\end{equation*}
		\item [(b)] \textit{join-inaccessible}, if, for any directed subset $S$ of $L$, it satisfies the implication
		\begin{equation*}
		m=\bigvee S\Rightarrow\exists_{s\in S}\,m=s.
		\end{equation*}
	\end{itemize}}
\end{defi}

Every compact element in $L$ is obviously join-inaccessible. On the other hand, if $x<y$ and $y$ is join-inaccessible, then there exists $x'\in L$ maximal with the property $x\leqslant x'<y$; this immediately follows from Zorn's Lemma applied to the set of all elements with that property. In particular, applying this to $y=1$, we obtain:

\begin{prop}
	If $L$ is compact, then, for every $x\neq 1$, there exists a maximal element $m$ in $L$ with $x\leqslant m$.
\end{prop}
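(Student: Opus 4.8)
The plan is to obtain the statement as an immediate consequence of the observation recorded just before it, applied with $y=1$. First I would note that ``$L$ is compact'' means exactly that the top element $1$ is a compact element, and, as already remarked, every compact element is join-inaccessible; so $1$ is join-inaccessible. Then, given $x\neq1$, we have $x<1$, and the quoted observation (for $x<y$ with $y$ join-inaccessible there exists $m\in L$ maximal with $x\leqslant m<y$) applied to $y=1$ yields an $m$ that is maximal among the elements $z$ of $L$ satisfying $x\leqslant z<1$.

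It then remains to check that this $m$ is maximal in the sense of Definition 5.1(a), that is, that $m<z$ forces $z=1$. This is immediate: if $m<z$ and $z\neq1$, then $z<1$, so $x\leqslant m<z<1$ would exhibit $z$ as an element of the family over which $m$ was chosen maximal, contradicting $m<z$. Hence $z=1$, and $m$ is a maximal element of $L$ with $x\leqslant m$, as required.

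For completeness I would also indicate the self-contained version that does not quote the auxiliary observation: apply Zorn's Lemma to the poset $P=\{z\in L\mid x\leqslant z<1\}$, which is nonempty since $x\in P$. The only step that uses the hypothesis is verifying that every chain $C\subseteq P$ has an upper bound in $P$, namely $\bigvee C$: clearly $x\leqslant\bigvee C\leqslant1$, and if $\bigvee C=1$ then compactness of $1$ gives a finite $F\subseteq C$ with $\bigvee F=1$; but $C$ is a chain, so a nonempty finite $F$ has a greatest element and $\bigvee F=\max F\in P$, whence $\bigvee F<1$, a contradiction (and $F=\emptyset$ cannot occur, since then $0=\bigvee F=1$, contrary to $x<1$). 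A maximal element of $P$ is then maximal in $L$ by the argument of the previous paragraph. I do not anticipate any genuine difficulty here; the one place that needs a moment's care is this chain condition, and in particular the observation that it is precisely here that the compactness hypothesis enters.
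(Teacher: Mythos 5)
Your proof is correct and follows essentially the same route as the paper: the paper derives the proposition from the preceding observation (every compact element is join-inaccessible, and below a join-inaccessible element one finds, via Zorn's Lemma, elements maximal with the stated property) applied to $y=1$. Your self-contained Zorn's Lemma argument, including the check that the join of a chain in $\{z\mid x\leqslant z<1\}$ stays below $1$ by compactness of $1$, is exactly the argument the paper leaves implicit.
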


Next, we introduce:
\begin{defi}\emph{
	We will say that $L$ \textit{has enough primes} if, for every $x\neq1\in L$, there exists a prime $p\in L$ with $x\leqslant p$.} 
\end{defi}
\begin{defi}\emph{
    We will say that $L$ is \textit{distributive} if $x(y\vee z)=xy\vee xz$ and $(x\vee y)z=xz\vee yz$ for all $x$, $y$, and $z$ in $L$.} 
\end{defi}

Note: obviously, this distributivity is not the same as distributivity of $L$ as merely a lattice.

For $x\in L$, although the multiplication of $L$ is not required to be associative, $x^2=xx$ is well defined of course, and, in particular, we have $1^2$ not necessarily equal to $1$.
\begin{prop}
	If $L$ has enough primes, then $1^2=1$ in $L$.
\end{prop}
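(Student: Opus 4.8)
The plan is to argue by contradiction, exploiting the very definition of prime element (which forbids $1$ from being prime) against the hypothesis that $L$ has enough primes.

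First I would suppose $1^2\neq 1$. Since $1^2=1\cdot 1\leqslant 1\wedge 1=1$ always holds by the defining inequality of a complete multiplicative lattice, this assumption means precisely $1^2<1$, so in particular $1^2$ is an element of $L$ different from $1$. Applying the ``enough primes'' hypothesis to $x=1^2$, I obtain a prime element $p\in L$ with $1^2\leqslant p$, i.e. $1\cdot 1\leqslant p$. Since $p$ is prime, the defining implication $xy\leqslant p\Rightarrow(x\leqslant p\text{ or }y\leqslant p)$ applied with $x=y=1$ yields $1\leqslant p$, hence $p=1$. This contradicts the fact that a prime element is by definition distinct from $1$. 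Therefore $1^2=1$.

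I do not expect any real obstacle here: the statement is essentially a one-line consequence of Definition 2.2 together with Definition 5.5, and no use of associativity, distributivity, compactness, or algebraicity is needed. The only subtlety worth spelling out is the remark, already noted in the excerpt just before the proposition, that $1^2$ need not equal $1$ in general, so the conclusion genuinely has content; the proof simply records that the existence of enough primes rules out the degenerate behaviour $1^2<1$.
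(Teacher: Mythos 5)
Your proof is correct and is essentially the paper's own argument: both apply the ``enough primes'' hypothesis to $1^2$ (which is $<1$ by the defining inequality if not equal to $1$) and use primality of the resulting $p$ on $1\cdot 1\leqslant p$ to force $1\leqslant p$, a contradiction. No issues.
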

\begin{proof}
	For a prime $p$, $1^2\leqslant p$ would give $1\leqslant p$, which is a contradiction.
\end{proof}
\begin{prop}
	Suppose $L$ is compact. Then:
	\begin{itemize}
		\item [(a)] $L$ has enough primes if and only if all its maximal elements are prime;
		\item [(b)] if $L$ is distributive and $1^2=1$ in it, then every maximal element in $L$ is prime.
	\end{itemize}
\end{prop}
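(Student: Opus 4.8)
The plan is to prove the two parts separately, using compactness of $L$ mostly through Proposition 5.2 (which guarantees a supply of maximal elements) and through the reformulation of compactness in terms of finite subcovers.

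For part (a), the implication ``all maximal elements are prime $\Rightarrow$ $L$ has enough primes'' is immediate from Proposition 5.2: given $x\neq 1$, pick a maximal $m$ with $x\leqslant m$; by hypothesis $m$ is prime, so $x\leqslant m\in\mathrm{Spec}(L)$. The converse is the substantive direction. Suppose $L$ has enough primes and let $m$ be maximal. By Proposition 5.4 we have $1^2=1$, so in particular $m\neq 1$ forces $mm\not\leqslant$ nothing problematic yet; the point is to verify the prime implication. Suppose $xy\leqslant m$ but $x\not\leqslant m$ and $y\not\leqslant m$. Maximality of $m$ gives $x\vee m=1=y\vee m$. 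I would then try to deduce $1\leqslant m$ from this together with $xy\leqslant m$, which needs some distributivity-like manipulation of the product; but since part (a) does not assume distributivity, the cleaner route is: use ``enough primes'' to find a prime $p$ with $m\leqslant p$ (apply the hypothesis to $m\neq 1$). Since $m$ is maximal and $m\leqslant p\neq 1$, we get $m=p$, so $m$ is itself prime. That settles (a) with almost no computation — the only input beyond Proposition 5.2 is that ``enough primes'' produces a prime above $m$, and maximality collapses the inequality to equality.

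For part (b), assume $L$ compact, distributive (in the sense of Definition 5.3, i.e. multiplication distributes over binary joins), and $1^2=1$. Let $m$ be maximal; I want to show $m$ is prime. Suppose $xy\leqslant m$ and $x\not\leqslant m$. Then by maximality $x\vee m=1$. Now multiply on the right by $y$: using distributivity, $y=1\cdot y$? — no, I cannot assume $1y=y$. Instead multiply $(x\vee m)$ on the left by $(x\vee m)$ itself, or better: from $1=x\vee m$ and $1^2=1$ I get $1=1\cdot 1=(x\vee m)(x\vee m)=xx\vee xm\vee mx\vee mm$ by two applications of distributivity. Each of $xm$, $mx$, $mm$ is $\leqslant m$ (since $ab\leqslant a\wedge b\leqslant m$ when one of $a,b$ is $m$), so $1=xx\vee mm\vee(\text{stuff}\leqslant m)\leqslant xx\vee m$, giving $1=xx\vee m$. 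Hmm, I still need to bring in $y$. The right move: suppose also $y\not\leqslant m$, so $y\vee m=1$ as well, hence $1=(x\vee m)(y\vee m)=xy\vee xm\vee my\vee mm$; but $xy\leqslant m$ by hypothesis and $xm, my, mm\leqslant m$, so $1\leqslant m$, contradiction. Therefore $y\leqslant m$, and $m$ is prime. This is clean and uses exactly distributivity plus $1^2=1$.

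The main obstacle I anticipate is purely bookkeeping in part (b): making sure every cross-term $xm$, $my$, $mm$ is correctly bounded by $m$ via the defining inequality $ab\leqslant a\wedge b$ of a complete multiplicative lattice, and that the two-variable expansion $(x\vee m)(y\vee m)=xy\vee xm\vee my\vee mm$ is legitimately obtained by applying Definition 5.3 twice (once on the left factor, once on the right). No associativity is needed since every product here is a single binary product, and $1^2=1$ is used exactly once to replace $1$ by $(x\vee m)(y\vee m)$. Part (a) has essentially no obstacle beyond invoking the right earlier results.
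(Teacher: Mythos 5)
Your proposal is correct and follows essentially the same route as the paper: part (a) via Proposition 5.2 plus the observation that a prime above a maximal element must equal it, and part (b) by expanding $1=1^2=(x\vee m)(y\vee m)$ with distributivity and bounding every term by $m$. The brief detour in (a) where you first contemplate a distributivity argument is harmless, since you discard it in favour of the correct one.
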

\begin{proof}
	(a): The ``if'' part follows from Proposition 5.2, while the ``only if'' is obvious.
	
	(b): Suppose $m$ is not prime, and so $xy\leqslant m$, $x\nleqslant m$, and $y\nleqslant m$ for some $x$ and $y$ in $L$. If $m$ is maximal, then $m\vee x=1=m\vee y$, and so
	\begin{equation*}
	1=1^2=(m\vee x)(m\vee y)=mm\vee xm\vee my\vee xy\leqslant m\vee m\vee m\vee m=m,
	\end{equation*}
	which is a contradiction.
\end{proof}
From Propositions 5.5 and 5.6, we immediately obtain:
\begin{teo}
	If $L$ is compact and distributive, then the following conditions are equivalent:
	\begin{itemize}
		\item [(a)] $L$ has enough primes;
		\item [(b)] $1^2=1$ in $L$.
	\end{itemize}
\end{teo}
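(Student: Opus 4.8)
The plan is to obtain the theorem by simply assembling Propositions 5.5 and 5.6, since all the substantive content has already been proved there; no new argument is needed.

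For the implication $\mathrm{(a)}\Rightarrow\mathrm{(b)}$ I would invoke Proposition 5.5 verbatim: it asserts exactly that having enough primes forces $1^2=1$, and it uses neither compactness nor distributivity. (Its one-line proof is the observation that a prime $p$ with $1^2\leqslant p$ would satisfy $1\leqslant p$, contradicting $p\neq1$.) So this direction is immediate.

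For $\mathrm{(b)}\Rightarrow\mathrm{(a)}$ I would combine the two parts of Proposition 5.6. Assuming $L$ is compact, distributive, and $1^2=1$, part (b) of that proposition gives that every maximal element of $L$ is prime, and then part (a) of that proposition — which is where compactness enters, via Proposition 5.2 producing a maximal element above any $x\neq1$ — converts ``all maximal elements are prime'' into ``$L$ has enough primes''. Chaining these yields the conclusion.

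Since the theorem is pure bookkeeping, the only ``obstacle'' worth flagging is that its real weight sits inside Proposition 5.6(b): given a non-prime maximal $m$ with $xy\leqslant m$, $x\nleqslant m$, $y\nleqslant m$, maximality forces $m\vee x=1=m\vee y$, whence distributivity lets one expand $1=1^2=(m\vee x)(m\vee y)=mm\vee xm\vee my\vee xy\leqslant m$, a contradiction. But as that lemma is already available to us, the proof of Theorem 5.7 reduces to citing Propositions 5.5 and 5.6 and noting that $\mathrm{(a)}\Leftrightarrow\mathrm{(b)}$ follows.
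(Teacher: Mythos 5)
Your proposal is correct and matches the paper exactly: the paper derives Theorem 5.7 immediately from Propositions 5.5 and 5.6, with (a)$\Rightarrow$(b) given by Proposition 5.5 and (b)$\Rightarrow$(a) by chaining Proposition 5.6(b) with Proposition 5.6(a). Nothing further is needed.
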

\begin{remark}\emph{
	Consider the following conditions of $L$ (cf. Example 2.9):
	\begin{itemize}
		\item [(i)] $xy=0$ for all $x,y\in L$;
		\item [(ii)] $1^2=0$ in $L$;
		\item [(iii)] $1^2=1$ in $L$;
		\item [(iv)] $1x=x=x1$ for all $x\in L$;
		\item [(v)] $xy=x\wedge y$ for all $x,y\in L$.
	\end{itemize}
	Note that:
	\begin{itemize}
		\item [(a)] We obviously have $(\mathrm{i})\Rightarrow(\mathrm{ii})$ and $(\mathrm{v})\Rightarrow(\mathrm{iv})\Rightarrow(\mathrm{iii})$. Also $(\mathrm{ii})\Rightarrow(\mathrm{i})$ under the monotonicity condition.
		\item [(b)] We already know what happens under condition $(\mathrm{i})$ (see the example at the end of Remark 4.5), and the same would happen under condition $(\mathrm{ii})$.
		\item [(c)] Suppose $L$ is compact and distributive. Then, according to Theorem 5.7, condition $(\mathrm{iii})$ is exactly what is needed to make all $\mathrm{V}(x)$ non-empty, except $\mathrm{V}(1)$, which must be empty in any case. 
		\item [(d)] The stronger conditions $(\mathrm{iv})$ and $(\mathrm{v})$ will appear (again) later and we will see examples where $(\mathrm{iii})$ holds but $(\mathrm{iv})$ does not, and other examples where $(\mathrm{iv})$ holds but $(\mathrm{v})$ does not. We have already mentioned, in the case of quantales, that $(\mathrm{iv})$ has a name (``two-sided'') and that $(\mathrm{v})$ is what makes a quantale a locale (=frame) (see Example 3.3).
	\end{itemize}}
\end{remark}

\section{Solvable and locally solvable elements}

\begin{defi}\emph{
	For $x\in L$, the \textit{derived series} $x^{(0)},x^{(1)},x^{(2)},\ldots$ is defined by induction as
	\begin{equation*}
	x^{(0)}=x,\,\,x^{(n+1)}=(x^{(n)})^2\,(=(x^2)^{(n)}),
	\end{equation*}
	and, for $y\in L$, we will say that $x$ is $y$-\textit{solvable} if $x^{(n)}\leqslant y$ for some natural $n$. The join of all  $y$-solvable elements of $L$ will be denoted by $\mathrm{solv}(y)$.}
\end{defi}
\begin{prop}
	For elements $x$ and $y$ in $L$, if $x$ is $y$-solvable, then $x\leqslant\sqrt{y}$; hence $\mathrm{solv}(y)\leqslant\sqrt{y}$ for all $y\in L$.\qed 
\end{prop}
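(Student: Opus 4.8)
The plan is to prove the first assertion by reducing it to a statement about individual prime elements, and then to obtain the ``hence'' clause as an immediate formality. Recall that $\sqrt{y}$ is, by Definition 2.4, the meet of all prime elements $p$ of $L$ with $y\leqslant p$. Hence, to prove $x\leqslant\sqrt{y}$, it suffices to fix an arbitrary prime $p$ with $y\leqslant p$ and show that $x\leqslant p$.

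So I would assume $x$ is $y$-solvable, say $x^{(n)}\leqslant y$, and let $p$ be prime with $y\leqslant p$; then $x^{(n)}\leqslant p$. Now I would argue by descending induction on the index: if $x^{(k)}\leqslant p$ for some $k\geqslant 1$, then, since $x^{(k)}=(x^{(k-1)})^2=x^{(k-1)}x^{(k-1)}\leqslant p$ and $p$ is prime, the defining implication for a prime element gives $x^{(k-1)}\leqslant p$. Starting from $x^{(n)}\leqslant p$ and running this step down to $k=1$ yields $x=x^{(0)}\leqslant p$ (and if $n=0$ there is nothing to run, since $x^{(0)}\leqslant y\leqslant p$ already). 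As $p$ was an arbitrary prime above $y$, this proves $x\leqslant\sqrt{y}$.

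For the second assertion: $\mathrm{solv}(y)$ is by definition the join of all $y$-solvable elements of $L$, each of which is $\leqslant\sqrt{y}$ by what has just been shown. Since $\sqrt{y}$ is therefore an upper bound of this family, $\mathrm{solv}(y)\leqslant\sqrt{y}$.

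I do not expect any real obstacle here. The one point deserving a moment's attention is that the multiplication of $L$ is not assumed associative, so one must be careful not to identify $x^{(n)}$ with $x^{2^{n}}$ (a single iterated product); but the argument only ever invokes the recursion $x^{(k)}=x^{(k-1)}x^{(k-1)}$ together with primality of $p$, so non-associativity is irrelevant to it.
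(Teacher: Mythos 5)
Your argument is correct and is exactly the one the paper has in mind (the proposition is stated with a \qed and no written proof): since $\sqrt{y}$ is the meet of all primes above $y$, one descends from $x^{(n)}\leqslant y\leqslant p$ to $x\leqslant p$ using $x^{(k)}=x^{(k-1)}x^{(k-1)}$ and primality, and the ``hence'' clause follows because a join of elements below $\sqrt{y}$ is below $\sqrt{y}$. Your side remarks (that $\sqrt{y}$ coincides with the meet of primes above $y$, and that non-associativity is irrelevant) are both accurate.
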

\begin{defi}\emph{
	We will say that $L$ satisfies the \textit{weak monotonicity condition} if
	\begin{equation*}
	x\leqslant y\Rightarrow x^2\leqslant y^2
	\end{equation*}
	for all $x,y\in L$.}
\end{defi}
\begin{prop}
	Suppose $L$ satisfies the weak monotonicity condition. Then, for $x,y,z\in L$, we have:
	\begin{itemize}
		\item [(a)] if $x\leqslant y$ and $y$ is $z$-solvable, then $x$ is $z$-solvable;
		\item [(b)] if $x$ is $y$-solvable and $y$ is $z$-solvable, then $x$ is $z$-solvable.\qed  
	\end{itemize}
\end{prop}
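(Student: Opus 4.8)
The plan is to reduce both parts to a single monotonicity property of the derived series, namely the claim that $x\leqslant y$ implies $x^{(k)}\leqslant y^{(k)}$ for every natural number $k$. First I would prove this claim by induction on $k$: the base case $k=0$ is just the hypothesis $x^{(0)}=x\leqslant y=y^{(0)}$, and the inductive step is immediate from the weak monotonicity condition, since $x^{(k)}\leqslant y^{(k)}$ gives $x^{(k+1)}=(x^{(k)})^2\leqslant(y^{(k)})^2=y^{(k+1)}$.

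Granting the claim, part (a) is essentially instantaneous. If $x\leqslant y$ and $y$ is $z$-solvable, pick $n$ with $y^{(n)}\leqslant z$; then $x^{(n)}\leqslant y^{(n)}\leqslant z$, so $x$ is $z$-solvable, with the same index $n$.

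For part (b) I would combine the claim with the ``shift'' identity $x^{(m+k)}=(x^{(m)})^{(k)}$, valid for all $m,k\in\N$, which follows by an easy induction on $k$ directly from the definition (it is a companion of the identity $x^{(n+1)}=(x^2)^{(n)}$ already recorded in Definition 6.1, and, like that one, uses only squaring, so the possible non-associativity of the multiplication of $L$ causes no trouble). Now suppose $x$ is $y$-solvable and $y$ is $z$-solvable, say $x^{(m)}\leqslant y$ and $y^{(k)}\leqslant z$. Applying the claim to the inequality $x^{(m)}\leqslant y$ yields $(x^{(m)})^{(k)}\leqslant y^{(k)}\leqslant z$, and by the shift identity $(x^{(m)})^{(k)}=x^{(m+k)}$; hence $x^{(m+k)}\leqslant z$, so $x$ is $z$-solvable. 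Alternatively, one could apply part (a) to $x^{(m)}\leqslant y$ to conclude that $x^{(m)}$ is $z$-solvable, and then translate back via the shift identity.

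There is no serious obstacle here: the only point that needs a moment's care is arranging the inductions so that the weak monotonicity condition — which controls only squares — genuinely suffices, and checking that the shift identity is established without appealing to associativity. Everything else is routine bookkeeping with the indices of the derived series.
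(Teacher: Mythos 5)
Your proof is correct, and it fills in exactly the argument the paper treats as immediate (Proposition 6.4 is stated with no written proof): the induction showing $x\leqslant y\Rightarrow x^{(k)}\leqslant y^{(k)}$ under weak monotonicity, plus the purely definitional shift identity $x^{(m+k)}=(x^{(m)})^{(k)}$, is the standard route and clearly what the authors intend. No gaps.
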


From Proposition 6.4(a), we obtain:
\begin{cor}
	If $L$ satisfies the weak monotonicity condition and is algebraic, then $\mathrm{solv}(y)$ is the join of all compact $y$-solvable elements of $L$.
\end{cor}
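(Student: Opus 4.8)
The plan is to compare $\mathrm{solv}(y)$ with the element $J:=\bigvee\{c\in L\mid c\text{ is compact and }y\text{-solvable}\}$ and to show these two are equal. One inequality is immediate from the definitions: every compact $y$-solvable element is in particular $y$-solvable, hence $\leqslant\mathrm{solv}(y)$ by the very definition of $\mathrm{solv}(y)$ as the join of all $y$-solvable elements; taking the join over all such compact elements gives $J\leqslant\mathrm{solv}(y)$.

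For the reverse inequality $\mathrm{solv}(y)\leqslant J$, I would argue that it suffices to prove $x\leqslant J$ for every $y$-solvable element $x$, since $\mathrm{solv}(y)$ is by definition the join of all such $x$. Fix a $y$-solvable element $x$. Using that $L$ is algebraic, write $x=\bigvee\{c\in L\mid c\text{ compact},\ c\leqslant x\}$. For each compact $c\leqslant x$, Proposition 6.4(a) — which is precisely where the weak monotonicity condition is needed — gives that $c$ is $y$-solvable, since $c\leqslant x$ and $x$ is $y$-solvable. Hence each such $c$ is a compact $y$-solvable element, so $c\leqslant J$, and taking the join yields $x\leqslant J$. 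Therefore $\mathrm{solv}(y)\leqslant J$, and combining with the first inequality gives $\mathrm{solv}(y)=J$.

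There is essentially no obstacle here: the argument is a routine combination of the definition of an algebraic lattice with the downward-closedness of $y$-solvability recorded in Proposition 6.4(a). The only subtlety worth noting is that the supremum defining $\mathrm{solv}(y)$ ranges over all $y$-solvable elements, not merely compact ones, so one genuinely has to invoke algebraicity of $L$ to replace an arbitrary $y$-solvable element by the family of compact $y$-solvable elements below it.
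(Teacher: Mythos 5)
Your proof is correct and is exactly the argument the paper intends: the paper derives this corollary directly from Proposition 6.4(a) (downward closure of $y$-solvability under weak monotonicity) combined with algebraicity, which is precisely your decomposition of each $y$-solvable element into the compact elements below it. Nothing further is needed.
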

\begin{prop}
	If $L$ is distributive, then $(x\vee y)^{(2n)}\leqslant x^{(n)}\vee y^{(n)}$ for all $x,y\in L$.
\end{prop}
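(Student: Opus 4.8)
The plan is to prove the inequality by induction on $n$, with the whole difficulty concentrated in the single estimate
\[
(x\vee y)^{(2)}\leqslant x^{(1)}\vee y^{(1)}\qquad(x,y\in L).
\]
Before that I would record that a distributive $L$ automatically satisfies the monotonicity condition of Example 2.9: if $x\leqslant y$ then $y=x\vee y$, so $yx'=xx'\vee yx'$, i.e.\ $xx'\leqslant yx'$, and symmetrically $yx'\leqslant yy'$ when $x'\leqslant y'$; hence $xx'\leqslant yy'$. In particular $w\mapsto w^{2}$ and each $w\mapsto w^{(k)}$ are order preserving, which is what makes the induction run.

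For the displayed estimate I would expand everything by distributivity: $(x\vee y)^{(1)}=(x\vee y)(x\vee y)=x^{2}\vee xy\vee yx\vee y^{2}=:w$, and then, again by distributivity, $(x\vee y)^{(2)}=w\,w=\bigvee\{\,pq\mid p,q\in\{x^{2},xy,yx,y^{2}\}\,\}$, a join of sixteen products, which I would bound one family at a time. Every element of $\{x^{2},xy,yx\}$ is $\leqslant x$ (by $uv\leqslant u\wedge v$), so if $p,q$ both lie in $\{x^{2},xy,yx\}$ then $pq\leqslant x\cdot x=x^{2}$ by monotonicity; symmetrically, if $p,q$ both lie in $\{xy,yx,y^{2}\}$ then $pq\leqslant y^{2}$; and the only pairs not covered by these two cases are $\{p,q\}=\{x^{2},y^{2}\}$, where $pq\leqslant p\wedge q\leqslant x^{2}$ outright. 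Thus each of the sixteen products, and hence their join, is $\leqslant x^{2}\vee y^{2}=x^{(1)}\vee y^{(1)}$.

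Finally I would run the induction. The case $n=0$ is the equality $(x\vee y)^{(0)}=x\vee y=x^{(0)}\vee y^{(0)}$. For the step, using $(x\vee y)^{(2n+2)}=\bigl((x\vee y)^{(2n)}\bigr)^{(2)}$ (immediate from the definition of the derived series), the induction hypothesis, and monotonicity of $w\mapsto w^{(2)}$, I get $(x\vee y)^{(2n+2)}\leqslant\bigl(x^{(n)}\vee y^{(n)}\bigr)^{(2)}$; applying the displayed estimate to the elements $x^{(n)},y^{(n)}\in L$ then yields $\bigl(x^{(n)}\vee y^{(n)}\bigr)^{(2)}\leqslant\bigl(x^{(n)}\bigr)^{(1)}\vee\bigl(y^{(n)}\bigr)^{(1)}=x^{(n+1)}\vee y^{(n+1)}$. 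The one genuinely delicate point is the ``cross terms'' $(xy)(xy),(xy)(yx),(yx)(xy),(yx)(yx)$ in the sixteen-term expansion: the defining inequality by itself only gives, say, $(xy)(xy)\leqslant xy$, which is useless, so it is precisely here that distributivity has to be invoked a second time, through the monotonicity it implies.
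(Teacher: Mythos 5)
Your proof is correct and follows essentially the same route as the paper's: an induction on $n$ whose step reduces to expanding $\bigl(x^{(n)}\vee y^{(n)}\bigr)^{(2)}$ into sixteen products by distributivity and bounding each by $x^{(n+1)}\vee y^{(n+1)}$, the cross terms being handled via the monotonicity that distributivity implies. The only (welcome) difference is organizational: you isolate the $n=1$ estimate as a lemma and explicitly record that distributivity yields the monotonicity condition, a fact the paper uses tacitly.
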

\begin{proof}
	This inequality is trivial for $n=0$; assuming that it holds for $n$, we have:
	\begin{equation*}
	(x\vee y)^{(2(n+1))}=(x\vee y)^{(2n+1+1)}=(((x\vee y)^{(2n)})^2)^2\leqslant((x^{(n)}\vee y^{(n)})^2)^2
	\end{equation*}
	\begin{equation*}
	=((x^{(n)})^2\vee(x^{(n)})(y^{(n)})\vee(y^{(n)})(x^{(n)})\vee(y^{(n)})^2)^2
	\end{equation*}
	\begin{equation*}
	=(x^{(n+1)}\vee(x^{(n)})(y^{(n)})\vee(y^{(n)})(x^{(n)})\vee y^{(n+1)})^2.
	\end{equation*}
	Then, opening parentheses we obtain a join of sixteen elements, and we have to show that each of them is less or equal to $x^{(n+1)}\vee y^{(n+1)}$. This is certainly true for all those elements that are multiples of either $x^{(n+1)}$ or $y^{(n+1)}$, and so we only need to check the remaining four members, which are:
	\begin{equation*}
	((x^{(n)})(y^{(n)}))^2,\,((x^{(n)})(y^{(n)}))((y^{(n)})(x^{(n)})),\,((y^{(n)})(x^{(n)}))((x^{(n)})(y^{(n)})),
	\end{equation*}
	and $((y^{(n)})(x^{(n)}))^2$. However, each of them is less or equal to, say, $((x^{(n)}))^2=x^{(n+1)}$.
\end{proof}
\begin{cor}
	If $L$ is distributive, and $x$ and $y$ are $z$-solvable in $L$, then $x\vee y$ is solvable.\qed
\end{cor}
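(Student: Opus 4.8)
This is a short consequence of Proposition 6.6, so the plan is simply to combine it with the basic fact that derived series are non-increasing. The first step is to record that for every $w\in L$ one has $w^{(n+1)}=(w^{(n)})^2\leqslant w^{(n)}\wedge w^{(n)}=w^{(n)}$, using the defining inequality $uu\leqslant u\wedge u$ of a complete multiplicative lattice; hence $w^{(m)}\leqslant w^{(n)}$ whenever $m\geqslant n$. Consequently, if $w$ is $z$-solvable, then $w^{(n)}\leqslant z$ holds not merely for some $n$ but for all sufficiently large $n$.

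Now suppose $x$ and $y$ are $z$-solvable, so that $x^{(n_1)}\leqslant z$ and $y^{(n_2)}\leqslant z$ for suitable natural numbers $n_1,n_2$. Put $n=\max(n_1,n_2)$. By the previous paragraph $x^{(n)}\leqslant x^{(n_1)}\leqslant z$ and $y^{(n)}\leqslant y^{(n_2)}\leqslant z$, whence $x^{(n)}\vee y^{(n)}\leqslant z$. Applying Proposition 6.6 (which is where distributivity of $L$ is used) we get
\begin{equation*}
(x\vee y)^{(2n)}\leqslant x^{(n)}\vee y^{(n)}\leqslant z,
\end{equation*}
so $x\vee y$ is $z$-solvable, which is what the statement asserts (reading ``solvable'' as ``$z$-solvable'').

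There is no real obstacle here: the only point worth flagging is the need for a \emph{common} exponent $n$ working for both $x$ and $y$, which is exactly what the monotonicity $w^{(m)}\leqslant w^{(n)}$ ($m\geqslant n$) provides, together with the fact that Proposition 6.6 doubles the exponent but this causes no trouble since $2n\geqslant n$ and the series is still decreasing at stage $2n$. Note also that the weak monotonicity condition is \emph{not} needed for this corollary — only plain distributivity of $L$ — in contrast with Proposition 6.4 and Corollary 6.5.
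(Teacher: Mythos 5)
Your proof is correct and is exactly the argument the paper intends: the corollary is stated without proof as an immediate consequence of Proposition 6.6, and the only detail to supply is the one you supply, namely that the derived series is non-increasing (from $w^2\leqslant w\wedge w=w$), so a common exponent $n$ can be chosen and $(x\vee y)^{(2n)}\leqslant x^{(n)}\vee y^{(n)}\leqslant z$. Your observation that only the defining inequality of a complete multiplicative lattice — and not the weak monotonicity condition — is needed here is also accurate.
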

\begin{cor}
	If $L$ is distributive, $x\in L$, and $\mathrm{solv}(x)$ is compact in $L$, or, more generally, in the lattice $\{y\in L\,|\,x\leqslant y\}$, then $\mathrm{solv}(x)$ is the largest $x$-solvable element of $L$.\qed
\end{cor}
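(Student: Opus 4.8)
The plan is to deduce the corollary from a single assertion: that $\mathrm{solv}(x)$ is itself $x$-solvable. Once this is known, then, being by definition the join of all $x$-solvable elements, $\mathrm{solv}(x)$ is automatically an upper bound of every $x$-solvable element, hence the largest one.

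First I would record that the set $S$ of all $x$-solvable elements of $L$ is nonempty, since $x^{(0)}=x$ shows $x\in S$, and that $S$ is closed under binary joins by Corollary 6.7 (this is where distributivity of $L$ is used). Consequently $S$ is an up-directed subset of $L$ with $\bigvee S=\mathrm{solv}(x)$.

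Next, because the members of $S$ need not lie above $x$ (for instance $0\in S$) while the compactness we are given is compactness in the sublattice $\uparrow x=\{y\in L\mid x\leqslant y\}$, I would replace $S$ by $S_x=\{x\vee y\mid y\in S\}$. Since $x\in S$ and $S$ is closed under binary joins, every element of $S_x$ lies in $S$ and above $x$, so $S_x$ is a nonempty directed subset of $S\cap\uparrow x$; moreover $\bigvee S_x=x\vee\bigvee S=\bigvee S=\mathrm{solv}(x)$, this join being the same whether computed in $L$ or in $\uparrow x$, where nonempty joins agree with those of $L$.

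Finally I would invoke the standard reformulation of compactness: a compact element $c$ of a complete lattice satisfies $c\leqslant d$ for some $d\in D$ whenever $D$ is directed and $c\leqslant\bigvee D$ (obtained from the defining finite-subset property together with directedness of $D$). Applied to $c=\mathrm{solv}(x)$, compact in $\uparrow x$, and to $D=S_x$, this yields $\mathrm{solv}(x)\leqslant s$ for some $s\in S_x$; combined with $s\leqslant\bigvee S_x=\mathrm{solv}(x)$ this gives $\mathrm{solv}(x)=s\in S_x\subseteq S$, so $\mathrm{solv}(x)$ is $x$-solvable. To cover the case where $\mathrm{solv}(x)$ is compact already in $L$, I would note that this implies compactness in $\uparrow x$: nonempty joins in $\uparrow x$ coincide with those in $L$, and the only extra case, the empty join $x$, would force $\mathrm{solv}(x)\leqslant x$ and is then trivial. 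The one point that needs care is precisely this passage between joins in $L$ and joins in $\uparrow x$, together with the fact that $S$ itself is not contained in $\uparrow x$; everything else is immediate from Corollary 6.7 and the definition of compactness.
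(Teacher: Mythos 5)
Your argument is correct and is precisely the intended one behind the paper's \qed: Corollary 6.7 (plus $x^{(0)}=x$) makes the set of $x$-solvable elements a nonempty directed set whose join is $\mathrm{solv}(x)$, and compactness (in $\{y\in L\mid x\leqslant y\}$, after the harmless replacement of that set by its translate above $x$) forces the join to be attained, so $\mathrm{solv}(x)$ is itself $x$-solvable and hence the largest such element. Your extra care about joins in $L$ versus joins in the sublattice $\{y\in L\mid x\leqslant y\}$, and about why compactness in $L$ implies compactness there, is exactly the bookkeeping the paper leaves implicit.
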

\begin{defi}\emph{
	For $x,y\in L$, we will say that $x$ is \textit{locally $y$-solvable} if, for every infinite sequence $c_0,c_1,c_2,\ldots$ of compact elements in $L$ with\footnote{Note that every finite such sequence can be made infinite by adding zero's.}
	\begin{equation*}
	c_0\leqslant x\,\,\,\text{and}\,\,\,c_{n+1}\leqslant c_n^2\,(n=0,1,2,\ldots),
	\end{equation*}
	there exists $n$ with $c_n\leqslant y$. The join of all locally $y$-solvable elements of $L$ will be denoted by $\mathrm{loc.solv}(y)$.}
\end{defi}
\begin{lem}
	If $x$ is locally $y$-solvable and $x'\leqslant x$, then $x'$ is locally $y$-solvable.\qed
\end{lem}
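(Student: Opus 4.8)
The plan is to unwind Definition 6.10 directly; no machinery beyond it is needed. Given that $x$ is locally $y$-solvable and that $x'\leqslant x$, I would take an arbitrary infinite sequence $c_0,c_1,c_2,\ldots$ of compact elements of $L$ satisfying $c_0\leqslant x'$ and $c_{n+1}\leqslant c_n^2$ for all $n$, and aim to produce an $n$ with $c_n\leqslant y$.

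The key (and only) observation is that $c_0\leqslant x'\leqslant x$, so this very same sequence $c_0,c_1,c_2,\ldots$ satisfies the hypotheses that appear in the definition of ``$x$ is locally $y$-solvable''. Since $x$ is locally $y$-solvable, there is an $n$ with $c_n\leqslant y$. As the sequence was arbitrary, this shows that $x'$ is locally $y$-solvable.

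There is no genuine obstacle: the statement merely records the evident monotonicity of the defining condition in its implicit ``starting element'', reflecting the fact that shrinking $x$ to $x'$ can only shrink the family of admissible sequences, hence can only make the condition easier to satisfy. Equivalently, one may phrase the proof as the remark that $\{z\in L\mid z\text{ is locally }y\text{-solvable}\}$ is a down-set, which is immediate from Definition 6.10; this is also what makes the subsequent definition of $\mathrm{loc.solv}(y)$ as a join behave well.
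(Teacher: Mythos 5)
Your proof is correct and is precisely the immediate unwinding of Definition 6.9 that the paper has in mind (the paper marks the lemma with \qed and omits the argument as evident). Nothing is missing: the observation $c_0\leqslant x'\leqslant x$ is the whole content.
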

\begin{cor}
	If $L$ is algebraic, then $\mathrm{loc.solv}(y)$ is the join of all compact locally $y$-solvable elements of $L$.\qed
\end{cor}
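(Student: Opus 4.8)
The plan is to establish the equality between $\mathrm{loc.solv}(y)$ and the join $J$ of all compact locally $y$-solvable elements of $L$ by proving the two inequalities separately; only one of them requires any work.

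One direction is immediate: a compact locally $y$-solvable element is in particular a locally $y$-solvable element, hence it occurs among the elements whose join defines $\mathrm{loc.solv}(y)$ in Definition 6.9, and therefore $J\leqslant\mathrm{loc.solv}(y)$. For the reverse inequality it suffices to show that $x\leqslant J$ for every locally $y$-solvable $x\in L$, and then to take the join over all such $x$. Here I would use algebraicity of $L$: the element $x$ equals the join of the set of compact $c\in L$ with $c\leqslant x$. Each such $c$ lies below the locally $y$-solvable element $x$, so by Lemma 6.10 it is itself locally $y$-solvable, and hence $c\leqslant J$. Consequently $x=\bigvee\{c\in L\mid c\ \text{compact},\ c\leqslant x\}\leqslant J$, which is what we need.

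I do not anticipate any genuine obstacle: the argument is just the standard fact that, in an algebraic lattice, the join of a downward-closed family of elements coincides with the join of the compact members of that family, applied to the family of locally $y$-solvable elements, which is downward closed precisely by Lemma 6.10. It is the exact analogue of Corollary 6.5 for local solvability, with the small bonus that here the weak monotonicity condition is not needed, since Lemma 6.10 holds unconditionally.
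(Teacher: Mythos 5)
Your proof is correct and is exactly the argument the paper intends (the corollary is stated with no written proof, as an immediate consequence of Lemma 6.10 together with algebraicity): the nontrivial inequality follows from writing each locally $y$-solvable $x$ as the join of the compact elements below it, each of which is locally $y$-solvable by Lemma 6.10. Your observation that no weak monotonicity is needed, in contrast with Corollary 6.5, is also accurate.
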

\begin{defi}\emph{
	We will say that $L$ satisfies the \textit{weak} (form of) \textit{Kaplansky condition} if, whenever $x\in L$ is compact, so is $x^2$.}
\end{defi}
\begin{teo}
	For $x,y\in L$, we have:
	\begin{itemize}
		\item [(a)] if $L$ satisfies the weak monotonicity condition and $x$ is $y$-solvable, then $x$ is locally $y$-solvable;
		\item [(b)] if $L$ satisfies the weak Kaplansky condition and $x$ is compact and locally $y$-solvable, then $x$ is $y$-solvable.
		\item [(c)] if $L$ is algebraic and satisfies the weak monotonicity condition and the weak Kaplansky condition, then $\mathrm{loc.solv}(y)=\mathrm{solv}(y)$ for all $y\in L$.
	\end{itemize}
\end{teo}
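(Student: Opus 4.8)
My plan is to treat the three statements in order, since (c) will follow easily from (a) and (b). For part (a), I would unwind the two definitions directly. Suppose $x$ is $y$-solvable, so $x^{(n)}\leqslant y$ for some fixed $n$. Take any infinite sequence $c_0,c_1,c_2,\ldots$ of compact elements with $c_0\leqslant x$ and $c_{k+1}\leqslant c_k^2$ for all $k$. The natural claim is that $c_k\leqslant x^{(k)}$ for all $k$, proved by induction: the base case is $c_0\leqslant x=x^{(0)}$, and for the inductive step, from $c_k\leqslant x^{(k)}$ the weak monotonicity condition gives $c_k^2\leqslant (x^{(k)})^2=x^{(k+1)}$, and then $c_{k+1}\leqslant c_k^2\leqslant x^{(k+1)}$. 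Applying this with $k=n$ yields $c_n\leqslant x^{(n)}\leqslant y$, so $x$ is locally $y$-solvable. (Here it is exactly the weak monotonicity condition that powers the inductive step, so this hypothesis is genuinely used.)

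For part (b), assume $L$ satisfies the weak Kaplansky condition and that $x$ is compact and locally $y$-solvable. The idea is to build a canonical witnessing sequence out of the powers of $x$ itself. Set $c_0=x$ and inductively $c_{k+1}=c_k^2$; since $x$ is compact, the weak Kaplansky condition guarantees (by an easy induction on $k$) that every $c_k$ is compact, so $c_k=x^{(k)}$ is a legitimate sequence of compact elements with $c_0\leqslant x$ and $c_{k+1}\leqslant c_k^2$ (in fact with equality). Local $y$-solvability of $x$ then produces some $n$ with $c_n=x^{(n)}\leqslant y$, which says precisely that $x$ is $y$-solvable. This is the cleanest route and I expect no real obstacle here beyond making the compactness induction explicit.

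Finally, for part (c), assume $L$ is algebraic and satisfies both the weak monotonicity and the weak Kaplansky conditions. Since $L$ is algebraic, $\mathrm{solv}(y)$ is the join of all compact $y$-solvable elements by Corollary 6.5, and $\mathrm{loc.solv}(y)$ is the join of all compact locally $y$-solvable elements by Corollary 6.12. By part (a) every $y$-solvable element is locally $y$-solvable, so the first set of compact elements is contained in the second, giving $\mathrm{solv}(y)\leqslant\mathrm{loc.solv}(y)$. By part (b) every compact locally $y$-solvable element is $y$-solvable, so the two sets of compact elements actually coincide, and hence the two joins are equal: $\mathrm{loc.solv}(y)=\mathrm{solv}(y)$.

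I do not anticipate any serious difficulty in this argument; the only point requiring a little care is the bookkeeping in part (a), namely keeping the index $n$ (from $y$-solvability) fixed while checking the inequality $c_k\leqslant x^{(k)}$ for every $k$, and the parallel compactness induction in part (b). If anything is subtle it is recognizing in advance which of the three side conditions is needed where — weak monotonicity for (a), weak Kaplansky for (b), and both (plus algebraicity) for (c) — but the statement of the theorem already lays this out, so there is no hidden choice to make.
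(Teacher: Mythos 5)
Your proposal is correct and follows essentially the same route as the paper: the induction $c_k\leqslant x^{(k)}$ via weak monotonicity for (a), the canonical sequence $x^{(0)},x^{(1)},\ldots$ of compact elements (compactness by induction from the weak Kaplansky condition) for (b), and the combination of (a), (b) and the descriptions of $\mathrm{solv}(y)$ and $\mathrm{loc.solv}(y)$ as joins of compact elements for (c). The only differences are cosmetic: you spell out the compactness induction in (b) that the paper leaves implicit, and your citation of ``Corollary 6.12'' should be Corollary 6.11.
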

\begin{proof}
	(a): For a sequence $c_0,c_1,c_2,\ldots$ of compact elements in $L$ with $c_0\leqslant x$ and $c_{n+1}\leqslant c_n^2\,(n=0,1,2,\ldots)$, we can prove by induction that $c_n\leqslant x_{(n)}$ for all $n$. Indeed, this is trivial for $n=0$, and, once $c_n\leqslant x_{(n)}$, we have:
	\begin{equation*}
	c_{n+1}\leqslant c_n^2\leqslant x_{(n)}^2=x_{(n+1)}.
	\end{equation*}
	Therefore $x_{(n)}\leqslant y$ implies $c_n\leqslant y$, as desired.
	
	(b): If $x$ is compact, then, applying Definition 6.9 to the sequence $x=x_{(0)},x_{(1)},\\x_{(2)},\ldots$, we conclude that there exists $n$ with $x_{(n)}\leqslant y$.
	
	(c): The inequality $\mathrm{loc.solv}(y)\leqslant\mathrm{solv}(y)$ follows from Corollary 6.11 and (b), while the opposite inequality follows from (a).   
\end{proof}
\begin{prop}
	Suppose $L$ is algebraic. If $x$ is compact and locally $y$-solvable in $L$, then $x\leqslant\sqrt{y}$; hence $\mathrm{loc.solv}(y)\leqslant\sqrt{y}$ for all $y\in L$.
\end{prop}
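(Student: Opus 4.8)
The plan is to prove the first assertion in contrapositive form, reducing it to a statement about a single prime. Since $\sqrt{y}$ is the meet of all primes $p$ with $y\leqslant p$, it suffices to show that $x\leqslant p$ for each such prime $p$; so suppose instead that $p$ is prime, $y\leqslant p$, and $x\nleqslant p$, and let us derive a contradiction with the local $y$-solvability of $x$.

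The crucial elementary fact is that primeness of $p$ gives $c^2\nleqslant p$ for every $c$ with $c\nleqslant p$ (since $c\cdot c\leqslant p$ would force $c\leqslant p$). Combining this with algebraicity, I will construct by induction an infinite sequence $c_0,c_1,c_2,\ldots$ of compact elements of $L$ with $c_0\leqslant x$, with $c_{n+1}\leqslant c_n^2$ for all $n$, and with $c_n\nleqslant p$ for all $n$. For $n=0$ take $c_0=x$, which is compact and satisfies $x\nleqslant p$ by assumption. Given a compact $c_n$ with $c_n\nleqslant p$, the element $c_n^2$ satisfies $c_n^2\nleqslant p$; since $L$ is algebraic, $c_n^2$ is the join of the compact elements below it, so were every such compact element $\leqslant p$ we would get $c_n^2\leqslant p$, a contradiction. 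Hence some compact $c_{n+1}\leqslant c_n^2$ has $c_{n+1}\nleqslant p$, and the induction continues.

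This sequence is an admissible sequence for $x$ in the sense of Definition 6.9, yet, because $y\leqslant p$ and $c_n\nleqslant p$, we have $c_n\nleqslant y$ for every $n$; thus there is no $n$ with $c_n\leqslant y$, contradicting the local $y$-solvability of $x$. This proves $x\leqslant\sqrt{y}$. For the final clause, Corollary 6.11 (applicable since $L$ is algebraic) identifies $\mathrm{loc.solv}(y)$ with the join of all compact locally $y$-solvable elements of $L$; each of these is $\leqslant\sqrt{y}$ by what has just been shown, and $\sqrt{y}$ is an upper bound for them, so $\mathrm{loc.solv}(y)\leqslant\sqrt{y}$.

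The proof is short, and I expect no real obstacle; the only step needing attention is the inductive construction, where both hypotheses enter essentially — primeness of $p$ to pass from $c_n\nleqslant p$ to $c_n^2\nleqslant p$, and algebraicity of $L$ to find a compact element below $c_n^2$ that still escapes $p$. This is the familiar Levitzki-style descent used elsewhere in connection with local solvability.
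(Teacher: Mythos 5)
Your proposal is correct and follows essentially the same argument as the paper: the same Levitzki-style inductive construction of a sequence of compact elements $c_n\nleqslant p$ with $c_{n+1}\leqslant c_n^2$, using primeness of $p$ and algebraicity of $L$ at exactly the same points, followed by the same appeal to Corollary 6.11 for the final clause.
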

\begin{proof}
	Suppose $x$ is compact and locally $y$-solvable, and has $x\nleqslant\sqrt{y}$. Then, excluding the trivial case $y=1$, we have $x\nleqslant p$ for some prime $p$ with $y\leqslant p$. Using $p$, let us choose an infinite sequence $c_0,c_1,c_2,\ldots$ of compact elements, with $c_{n+1}\leqslant c_n^2$ and $c_n\nleqslant p$ for all $n$, inductively as follows:
	\begin{itemize}
		\item $c_0=x$.
		\item Once $c_n$ is chosen, $c_n\nleqslant p$ gives $c_n^2\nleqslant p$, since $p$ is prime. Since $L$ algebraic, this tells us that we can indeed choose a compact $c_{n+1}$ with $c_{n+1}\leqslant c_n^2$ and $c_{n+1}\nleqslant p$.
	\end{itemize}
	Then we obtain $c_n\leqslant y$ by Definition 6.9, which is a contradiction since we also have $y\leqslant p$ and $c_n\nleqslant p$.  
\end{proof}
\begin{remark}\emph{
	Requiring $L$ to satisfy the weak Kaplansky condition instead of being algebraic, we would obtain the first inequality of Proposition 6.14 as an immediate consequence of Proposition 6.2 (which itself is obvious) and Theorem 6.13(b). However, to automatically deduce the second inequality of Proposition 6.14 from that, we would still need algebraicity.} 
\end{remark}
\begin{lem}
	Suppose $L$ satisfies the monotonicity condition (introduced in the discussion of Example 3.7) and is algebraic. Then an element $p\neq1$ in $L$ is prime if and only if $p$ is locally prime, by which we mean that
	\begin{equation*}
	ab\leqslant p\Rightarrow(a\leqslant p\,\,\,\text{or}\,\,\,b\leqslant p)
	\end{equation*}
	for all compact $a$ and $b$ in $L$. 
\end{lem}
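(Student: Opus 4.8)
The plan is to observe that the forward implication is essentially free and that the converse is a short argument combining algebraicity with the monotonicity condition. For the ``only if'' part: if $p$ is prime, then the implication $ab\leqslant p\Rightarrow(a\leqslant p\text{ or }b\leqslant p)$ holds for \emph{all} $a,b\in L$, hence in particular for compact $a$ and $b$, so $p$ is locally prime. No hypotheses on $L$ are needed here.

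For the ``if'' part I would argue by contraposition (equivalently, by contradiction). Suppose $p\neq1$ is locally prime but not prime, so there are $x,y\in L$ with $xy\leqslant p$, $x\nleqslant p$, and $y\nleqslant p$. Since $L$ is algebraic, $x$ is the join of all compact elements below it; if every compact $a\leqslant x$ satisfied $a\leqslant p$, we would get $x\leqslant p$, a contradiction. Hence there is a compact $a\leqslant x$ with $a\nleqslant p$, and symmetrically a compact $b\leqslant y$ with $b\nleqslant p$.

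Now I would invoke the monotonicity condition: from $a\leqslant x$ and $b\leqslant y$ it gives $ab\leqslant xy\leqslant p$. Since $a$ and $b$ are compact and $p$ is locally prime, this forces $a\leqslant p$ or $b\leqslant p$, contradicting the choice of $a$ and $b$. Therefore $p$ must be prime. There is no real obstacle here: the only subtlety is making sure algebraicity is applied correctly to extract the compact witnesses $a$ and $b$ (this is where one uses that a join of elements all $\leqslant p$ is again $\leqslant p$), and that monotonicity is exactly the hypothesis needed to pass from $ab$ to $xy$; the hypothesis $p\neq1$ plays no role beyond being part of the definition of (local) primality.
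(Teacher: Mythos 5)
Your argument is correct and is essentially identical to the paper's proof: the forward direction is immediate, and the converse proceeds by extracting, via algebraicity, compact $a\leqslant x$ and $b\leqslant y$ not below $p$ and then applying the monotonicity condition to get $ab\leqslant xy\leqslant p$. Your added justification for why such compact witnesses exist (a join of elements below $p$ is below $p$) is exactly the detail the paper leaves implicit.
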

\begin{proof}
	The ``only if'' part is trivial. To prove the ``if'' part, suppose $p\in L$ is not prime and so $xy\leqslant p$, $x\nleqslant p$, and $y\nleqslant p$ for some $x,y\in L$. Then, since $L$ is algebraic, there exist compact $a,b\in L$ with $a\leqslant x$, $a\nleqslant p$, $b\leqslant y$, and $b\nleqslant p$, which also gives $ab\leqslant xy\leqslant p$ by the monotonicity condition. This tells us that $p$ is not locally prime.
\end{proof}
\begin{teo}
	Suppose $L$ is distributive and $x,y\in L$. If $x$ is compact and $x\leqslant\sqrt{y}$, then $x$ is locally $y$-solvable. Hence, if $L$ is distributive and algebraic, then $\sqrt{y}=\mathrm{loc.solv}(y)$ for all $y\in L$.
\end{teo}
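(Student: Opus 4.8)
The plan is to establish the first (main) statement by contradiction, via a Zorn's‑Lemma construction of a prime element $p$ lying over $y$ but missing the chosen compact element, in the spirit of Levitzki's argument, and then to deduce the equality $\sqrt{y}=\mathrm{loc.solv}(y)$ from the first statement together with Proposition 6.14 and algebraicity.

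So fix a compact $x\leqslant\sqrt{y}$ and an infinite sequence $c_0,c_1,c_2,\ldots$ of compact elements of $L$ with $c_0\leqslant x$ and $c_{n+1}\leqslant c_n^2$ for all $n$. First I record two preliminary facts: the sequence is non-increasing, because $c_{n+1}\leqslant c_n^2\leqslant c_n\wedge c_n=c_n$; and distributivity of $L$ implies the monotonicity condition, since a short computation with the two distributive laws shows that $u\leqslant v$ and $u'\leqslant v'$ force $uu'\leqslant uv'\leqslant vv'$. Now assume, towards a contradiction, that $c_n\nleqslant y$ for every $n$ (otherwise there is nothing to prove), and set
\[
\Sigma=\{z\in L\mid y\leqslant z\ \text{and}\ c_n\nleqslant z\ \text{for all}\ n\}.
\]
Then $y\in\Sigma$, and every non-empty chain $C\subseteq\Sigma$ has $\bigvee C\in\Sigma$: indeed $y\leqslant\bigvee C$, and if $c_n\leqslant\bigvee C$ then, $c_n$ being compact and $C$ a chain, $c_n\leqslant z$ for some $z\in C$, contradicting $z\in\Sigma$. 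By Zorn's Lemma $\Sigma$ has a maximal element $p$; note $p\neq1$ since $c_0\leqslant1$.

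The key step is to verify that $p$ is prime. Suppose not: there are $a,b\in L$ with $ab\leqslant p$, $a\nleqslant p$, $b\nleqslant p$. Then $p\vee a>p$ and $p\vee b>p$, so by maximality neither lies in $\Sigma$; since $y\leqslant p\leqslant p\vee a$ and likewise for $b$, this means $c_i\leqslant p\vee a$ and $c_j\leqslant p\vee b$ for some $i,j$. As the sequence is non-increasing, $N:=\max(i,j)$ satisfies $c_N\leqslant p\vee a$ and $c_N\leqslant p\vee b$, whence by monotonicity and distributivity
\[
c_{N+1}\leqslant c_N^2\leqslant(p\vee a)(p\vee b)=pp\vee pb\vee ap\vee ab\leqslant p,
\]
using $pp\leqslant p$, $pb\leqslant p\wedge b\leqslant p$, $ap\leqslant p$ and $ab\leqslant p$; this contradicts $c_{N+1}\nleqslant p$. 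Hence $p$ is prime, so it is a radical element with $y\leqslant p$, giving $\sqrt{y}\leqslant p$ and therefore $c_0\leqslant x\leqslant\sqrt{y}\leqslant p$ — contradicting $c_0\nleqslant p$. So some $c_n\leqslant y$, i.e. $x$ is locally $y$-solvable. For the final assertion, assume in addition that $L$ is algebraic: Proposition 6.14 gives $\mathrm{loc.solv}(y)\leqslant\sqrt{y}$, while algebraicity expresses $\sqrt{y}$ as the join of the compact $x$ with $x\leqslant\sqrt{y}$, each of which is locally $y$-solvable by what we just proved, hence $\leqslant\mathrm{loc.solv}(y)$; taking the join yields $\sqrt{y}\leqslant\mathrm{loc.solv}(y)$. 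The one genuinely delicate point is the primality of $p$: having the non-increasing sequence is exactly what lets a single index $N$ work for both $p\vee a$ and $p\vee b$, after which the distributive expansion closes the argument; everything else is bookkeeping.
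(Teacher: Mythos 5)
Your proof is correct and follows essentially the same route as the paper's: the same Zorn's-Lemma argument on the set of elements above $y$ avoiding every $c_n$, the same distributive expansion $(p\vee a)(p\vee b)=pp\vee pb\vee ap\vee ab\leqslant p$ to show the maximal element is prime, and the same use of Proposition 6.14 plus algebraicity for the final equality. The one small (and welcome) difference is that you establish primality of $p$ directly from arbitrary witnesses $a,b$ of non-primality, whereas the paper first invokes Lemma 6.16 to replace them by compact ones — a step that is not actually needed for the computation and that quietly imports an algebraicity hypothesis not present in the first assertion of the theorem.
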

\begin{proof}
	Under the assumptions above, suppose $x$ is not locally $y$-solvable. Then there is a sequence $c_0,c_1,c_2,\ldots$ of compact elements in $L$ with $c_0\leqslant x$, and $c_{n+1}\leqslant c_n^2$ and $c_n\nleqslant y$ for all $n$. Consider the set
	\begin{equation*}
	Y=\{z\in L\,|\,y\leqslant z\,\,\&\,\,\forall_n\,c_n\nleqslant z\}.
	\end{equation*}
	We claim that $Y$ satisfies the assumptions of Zorn's Lemma. Moreover, it is (non-empty and) closed under joins of non-empty chains. Indeed:
	\begin{itemize}
		\item $Y$ is non-empty since it (obviously) contains $y$.
		\item Let $S$ be a non-empty chain in $Y$. If $\bigvee S\not\in Y$, then $c_n\nleqslant\bigvee S$ for some $n$. Since $c_n$ is compact and $S$ is a chain, this gives $c_n\nleqslant s$ for some $s\in S$. Since $S\subseteq Y$, this is a contradiction. Therefore $\bigvee S\in Y$. 
	\end{itemize}
	By Zorn's Lemma $Y$ has a maximal element (obviously not equal to 1), and we claim that any such maximal element $p$ is prime. Indeed, if $p$ is not prime, then:
	\begin{itemize}
		\item By Lemma 6.16, there exist compact $a,b\in L$ with $a\nleqslant p$, $b\nleqslant p$, and $ab\leqslant p$.
		\item Since $a\nleqslant p$ and $b\nleqslant p$, $p$ is strictly smaller than $p\vee a$ and than $p\vee b$.
		\item Since $p$ is a maximal element of $Y$, it follows that $p\vee a$ and than $p\vee b$ are not in $Y$.
		\item Since $y\leqslant p\leqslant p\vee a$ and $y\leqslant p\leqslant p\vee b$, while $p\vee a$ and $p\vee b$ are not in $Y$, we have $c_m\leqslant p\vee a$ and $c_n\leqslant p\vee b$ for some $m$ and $n$. Moreover, since the sequence $c_0,c_1,c_2,\ldots$ is decreasing, we can assume $m=n$.
		\item This gives 
		\begin{equation*}
		c_{n+1}\leqslant c_nc_n\leqslant(p\vee a)(p\vee b)=pp\vee ap\vee pb\vee ab\leqslant p\vee p\vee p\vee p=p,
		\end{equation*}
		which contradicts to $p\in Y$. 
	\end{itemize}
	That is, assuming that $x$ is not locally $y$-solvable, we found that $Y$ has an element $p$, which is prime in $L$. We conclude:
	\begin{itemize}
		\item On the one hand, since $x\leqslant\sqrt{y}$ (by the original assumption), $y\leqslant p$ (since $p\in Y$), and $p$ is prime, we have $x\leqslant p$.
		\item On the other hand, since $c_0\leqslant x$ and $c_0\nleqslant p$ (since $p\in Y$), we have $x\nleqslant p$. 
	\end{itemize}
	This contradiction completes our proof.
\end{proof}

\section{Another approach to solvability, to make radicals algebraic}

Given $x\in L$, we define a transfinite sequence $x_{(0)},x_{(1)},x_{(2)},\ldots\in L$ by induction as
\begin{equation*}
x_{(0)}=x,\,\,\,x_{(\alpha+1)}=\bigvee_{y^2\leqslant x_{(\alpha)}}y,\,\,\,x_{(\lambda)}=\bigvee_{\alpha<\lambda}x_{(\alpha)},\, \text{if } \lambda\,\, \text{is a limit ordinal},
\end{equation*}
and define $\mathrm{Solv}(x)$ as the join of this sequence.
We immediately obtain
\begin{prop}
	$\mathrm{Solv}:L\to L$ is a closure operator with $\mathrm{Solv}(x)\leqslant\sqrt{x}$ for every $x\in L$.
\end{prop}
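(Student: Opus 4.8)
The plan is to identify $\mathrm{Solv}(x)$ with the closure of $x$ for the closure system of ``closed'' elements, where I call $z\in L$ \emph{closed} if $y^2\leqslant z$ implies $y\leqslant z$ for all $y\in L$. First I would observe that the defining inequality of a complete multiplicative lattice gives $z^2\leqslant z\wedge z\leqslant z$, so taking $y=x_{(\alpha)}$ in the join defining $x_{(\alpha+1)}$ shows $x_{(\alpha)}\leqslant x_{(\alpha+1)}$; with the obvious limit case this yields, by transfinite induction, that the sequence $x_{(0)}\leqslant x_{(1)}\leqslant x_{(2)}\leqslant\cdots$ is non-decreasing. Since $L$ is a set it cannot be strictly increasing at every successor, so there is an ordinal $\gamma$ with $x_{(\gamma)}=x_{(\gamma+1)}$, and then a routine transfinite induction (its limit case needing monotonicity) gives $x_{(\alpha)}=x_{(\gamma)}$ for all $\alpha\geqslant\gamma$; hence $\mathrm{Solv}(x)=x_{(\gamma)}$. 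In particular $x=x_{(0)}\leqslant\mathrm{Solv}(x)$, and $\mathrm{Solv}(x)$ is closed, because $y^2\leqslant x_{(\gamma)}$ forces $y\leqslant x_{(\gamma+1)}=x_{(\gamma)}$.

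Next I would record two facts. An arbitrary meet of closed elements is closed: if $y^2\leqslant\bigwedge_i z_i$ then $y^2\leqslant z_i$, hence $y\leqslant z_i$ for each $i$, so $y\leqslant\bigwedge_i z_i$; and $1$ is vacuously closed, so the closed elements form a closure system. Moreover, if $z$ is closed and $x\leqslant z$, then $\mathrm{Solv}(x)\leqslant z$: by transfinite induction $x_{(\alpha)}\leqslant z$ for all $\alpha$ — trivial for $\alpha=0$, the successor step uses that $y^2\leqslant x_{(\alpha)}\leqslant z$ implies $y\leqslant z$ by closedness of $z$, and the limit step is immediate. Combining this with the facts that $x\leqslant\mathrm{Solv}(x)$ and $\mathrm{Solv}(x)$ is closed, we get that $\mathrm{Solv}(x)$ is the least closed element above $x$; thus $\mathrm{Solv}$ is precisely the closure operator attached to this closure system. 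In particular it is a closure operator: extensive by $x\leqslant\mathrm{Solv}(x)$; monotone because when $x\leqslant x'$ the closed element $\mathrm{Solv}(x')$ lies above $x$, whence $\mathrm{Solv}(x)\leqslant\mathrm{Solv}(x')$; and idempotent because $\mathrm{Solv}(x)$ is itself closed, so the least closed element above it is $\mathrm{Solv}(x)$.

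For the inequality $\mathrm{Solv}(x)\leqslant\sqrt{x}$ it then suffices to check that $\sqrt{x}$ is a closed element above $x$ and invoke the step just proved. Every prime $p$ is closed: $y^2\leqslant p$ is the instance $yy\leqslant p$ of the prime condition, which gives $y\leqslant p$. Since a radical element is by definition a meet of primes and meets of closed elements are closed, every radical element — in particular $\sqrt{x}$ — is closed; as $x\leqslant\sqrt{x}$, we conclude $\mathrm{Solv}(x)\leqslant\sqrt{x}$.

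The argument is essentially bookkeeping, and the only ingredient specific to the setting is $z^2\leqslant z$, which is exactly what forces the transfinite sequence to be non-decreasing and hence eventually stationary. So there is no genuine obstacle; the one place demanding care is the stabilization claim $x_{(\alpha)}=x_{(\gamma)}$ for all $\alpha\geqslant\gamma$, whose limit-ordinal case relies on the sequence being monotone.
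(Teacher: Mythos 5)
Your proof is correct; the engine is the same transfinite induction as in the paper, but you package it differently and in fact prove a bit more. The paper dismisses the closure-operator axioms as obvious and obtains $\mathrm{Solv}(x)\leqslant\sqrt{x}$ by noting that every \emph{prime} $p$ satisfies $x_{(\alpha)}\leqslant p\Rightarrow x_{(\alpha+1)}\leqslant p$, so the whole sequence, hence its join, stays below every prime above $x$. You run the same induction below an arbitrary element $z$ with $y^2\leqslant z\Rightarrow y\leqslant z$ --- exactly the \emph{semiprime} elements of the paper's Remark 12.4 --- and you add a stabilization argument showing the transfinite sequence is eventually constant, from which $\mathrm{Solv}(x)$ is itself semiprime. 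The net effect is that you identify $\mathrm{Solv}$ as the closure operator of the closure system of semiprime elements, i.e.\ you prove the equality $\mathrm{Solv}(x)=\mathrm{sp}(x)$ unconditionally, whereas Remark 12.4(b) records only the inequality $\mathrm{Solv}(x)\leqslant\mathrm{sp}(x)$ and deduces equality only under extra hypotheses. Since primes are semiprime and $\sqrt{x}$ is a meet of primes, your statement specializes to the one required. The only added cost is the eventual-stationarity lemma (a standard Hartogs-type argument, which you justify correctly, including the limit case via monotonicity of the sequence); the payoff is a cleaner conceptual description of $\mathrm{Solv}$ and the free strengthening $\mathrm{Solv}=\mathrm{sp}$.
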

\begin{proof}
	The properties
	\begin{itemize}
		\item $x\leqslant y\Rightarrow\mathrm{Solv}(x)\leqslant\mathrm{Solv}(x)$,
		\item $x\leqslant\mathrm{Solv}(x)$,
		\item $\mathrm{Solv}(\mathrm{Solv}(x))=\mathrm{Solv}(x)$ 
	\end{itemize}
	obviously hold. To prove the inequality $\mathrm{Solv}(x)\leqslant\sqrt{x}$, it suffices to note that every prime $p\in L$ satisfies the implication $x_{(\alpha)}\leqslant p\Rightarrow x_{(\alpha+1)}\leqslant p$. 
\end{proof}
\begin{lem}
	Suppose $L$ is algebraic and satisfies the weak monotonicity condition. Then, for $x\in L$, $x_{(1)}$ is the join of all compact elements $c\in L$ with $c^2\leqslant x$.
\end{lem}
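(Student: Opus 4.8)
The plan is to prove the two inequalities between $x_{(1)}$ and $d:=\bigvee\{c\in L\mid c\text{ compact},\ c^2\leqslant x\}$ separately. The inequality $d\leqslant x_{(1)}$ is immediate: every compact $c$ with $c^2\leqslant x$ is one of the elements $y$ with $y^2\leqslant x$ occurring in the join that defines $x_{(1)}$, so it lies below that join.

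For the reverse inequality $x_{(1)}\leqslant d$, it suffices, by the definition of $x_{(1)}$ as the join of all $y$ with $y^2\leqslant x$, to show that $y\leqslant d$ for every such $y$. Here I would invoke algebraicity of $L$ to write $y=\bigvee\{c\in L\mid c\leqslant y,\ c\text{ compact}\}$, so that it is enough to check $c\leqslant d$ for each compact $c\leqslant y$. For such a $c$, the weak monotonicity condition gives $c^2\leqslant y^2\leqslant x$, so $c$ is a compact element whose square is below $x$, and hence $c\leqslant d$ by the definition of $d$. Taking the join over all such $c$ yields $y\leqslant d$, and then taking the join over all $y$ with $y^2\leqslant x$ yields $x_{(1)}\leqslant d$, completing the argument.

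There is essentially no hard step here: the proof is a routine combination of the defining join for $x_{(1)}$, the algebraic representation of an arbitrary element as a join of the compact elements below it, and a single application of weak monotonicity to pass from $c\leqslant y$ to $c^2\leqslant y^2$. The one point that deserves a moment's care is that we do \emph{not} need the elements $y$ with $y^2\leqslant x$ to be compact (which would amount to the weak Kaplansky condition): algebraicity lets us descend to genuinely compact witnesses below each $y$, and weak monotonicity is exactly what licenses that descent while keeping the square below $x$.
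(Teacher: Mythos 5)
Your proof is correct and follows exactly the paper's argument: the nontrivial inequality is obtained by using algebraicity to write each $y$ with $y^2\leqslant x$ as the join of the compact elements below it, and weak monotonicity to see that each such compact $c$ satisfies $c^2\leqslant y^2\leqslant x$. The paper's version is merely more terse.
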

\begin{proof}
	Since $L$ is algebraic, it suffices to prove that, for every $y\in L$ with $y^2\leqslant x$ and every compact $c\in L$ with $c\leqslant y$, we have $c^2\leqslant x$. But this follows from the weak monotonicity condition.
\end{proof}
\begin{teo}
	Suppose $L$ is as in Theorem 6.13(c), that is, it is algebraic and satisfies the weak monotonicity condition and the weak Kaplansky condition. Then $(x_{(\omega)})_{(1)}=x_{(\omega)}$ (where $\omega$ is the first infinite ordinal), and so $\mathrm{Solv}(x)=x_{(\omega)}$, for every $x\in L$.
\end{teo}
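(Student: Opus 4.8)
The plan is to show that under the hypotheses, the transfinite sequence $x_{(0)}, x_{(1)}, x_{(2)}, \ldots$ already stabilizes at stage $\omega$, i.e. that $(x_{(\omega)})_{(1)} = x_{(\omega)}$; the second assertion $\mathrm{Solv}(x) = x_{(\omega)}$ then follows immediately, since once $x_{(\alpha+1)} = x_{(\alpha)}$ the whole remaining transfinite sequence is constant, and at limit stages joins of a constant sequence stay constant. So everything reduces to proving $(x_{(\omega)})_{(1)} \leqslant x_{(\omega)}$ (the reverse inequality is the extensivity of $(-)_{(1)}$, which is part of Proposition 7.1's argument).

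First I would rewrite both sides using Lemma 7.3. Since $L$ is algebraic, it suffices to check that every compact $c \in L$ with $c^2 \leqslant x_{(\omega)}$ satisfies $c \leqslant x_{(\omega)}$; by Lemma 7.3 applied to $x_{(\omega)}$ this is exactly the inequality $(x_{(\omega)})_{(1)} \leqslant x_{(\omega)}$ we want. Now fix such a compact $c$. By the weak Kaplansky condition, $c^2$ is compact, and $c^2 \leqslant x_{(\omega)} = \bigvee_{n<\omega} x_{(n)}$; since the sequence $x_{(0)} \leqslant x_{(1)} \leqslant \cdots$ is increasing (extensivity of $(-)_{(1)}$ plus the weak monotonicity condition, or directly from Lemma 7.3), its partial joins form a directed family, so compactness of $c^2$ gives $c^2 \leqslant x_{(n)}$ for some finite $n$. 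But then, by the very definition of $x_{(n+1)}$ (the join of all $y$ with $y^2 \leqslant x_{(n)}$), we get $c \leqslant x_{(n+1)} \leqslant x_{(\omega)}$. This is precisely what we needed.

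The one point that deserves care is the directedness/compactness step: I must make sure $x_{(\omega)}$ is genuinely the join of the \emph{chain} $(x_{(n)})_{n<\omega}$ and that this chain is increasing, so that a compact element below the join sits below one member. Monotonicity of the construction $\alpha \mapsto x_{(\alpha)}$ follows from the order-preservation of $(-)_{(1)}$ (immediate from its defining formula) together with extensivity $x \leqslant x_{(1)}$, giving $x_{(n)} \leqslant x_{(n+1)}$ by induction; the weak monotonicity and weak Kaplansky conditions are not even needed for this, only Kaplansky is needed (to know $c^2$ is compact). I would also remark that the weak monotonicity condition enters only implicitly, through its role in the cited Theorem 6.13(c)/Lemma 7.3 framework, and is not an extra burden here. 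The main (minor) obstacle is thus purely bookkeeping: confirming that the Lemma 7.3 description of $(-)_{(1)}$ applies verbatim with $x$ replaced by $x_{(\omega)}$, which it does since the hypotheses on $L$ are global.
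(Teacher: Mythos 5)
Your proposal is correct and follows essentially the same route as the paper: reduce via Lemma 7.2 (which you cite as ``Lemma 7.3'' --- a harmless numbering slip) to showing $c\leqslant x_{(\omega)}$ for compact $c$ with $c^2\leqslant x_{(\omega)}$, use the weak Kaplansky condition to make $c^2$ compact, pull $c^2$ below some $x_{(n)}$ in the increasing chain, and conclude $c\leqslant x_{(n+1)}\leqslant x_{(\omega)}$. Your extra bookkeeping (directedness of the chain, stabilization of the transfinite sequence past $\omega$, where weak monotonicity actually enters) is accurate and only makes explicit what the paper leaves implicit.
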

\begin{proof}
	Thanks to Lemma 7.2, it suffices to prove that for every compact $c\in L$ with $c^2\leqslant x_{(\omega)}$, we have $c\leqslant x_{(\omega)}$. Since $c^2$ is compact (by the weak Kaplansky condition) the inequality $c^2\leqslant x_{(\omega)}=x_{(0)}\vee x_{(1)}\vee x_{(2)}\vee\ldots$ (where all indices are natural numbers) implies $c^2\leqslant x_{(n)}$ for some natural $n$. Hence $c\leqslant x_{(n+1)}\leqslant x_{(\omega)}$, as desired.
\end{proof}
\begin{teo}
	We have:
	\begin{itemize}
		\item [(a)] $x^{(n)}\leqslant y\Rightarrow x\leqslant y_{(n)}$ for every $x,y\in L$ and every natural $n$; in particular, $\mathrm{solv}(y)\leqslant\mathrm{Solv}(y)$.
		\item [(b)] Suppose $L$ algebraic and distributive, and satisfies the weak Kaplansky condition. Then
		\begin{equation*}
		\mathrm{Solv}(y)=\mathrm{solv}(y)=\mathrm{loc.solv}(y)=\sqrt{y}.
		\end{equation*}		 
	\end{itemize}
\end{teo}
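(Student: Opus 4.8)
The plan is to prove part (a) by a short induction on $n$, and then to obtain part (b) by assembling that with several results already established.

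For (a), I would argue by induction on $n$, proving the implication $x^{(n)}\leqslant y\Rightarrow x\leqslant y_{(n)}$ simultaneously for all pairs $x,y\in L$. The base case $n=0$ is the tautology $x\leqslant y$. For the inductive step I would use the identity $x^{(n+1)}=(x^2)^{(n)}$ recorded in Definition 6.1: if $x^{(n+1)}=(x^2)^{(n)}\leqslant y$, then the inductive hypothesis applied to the pair $(x^2,y)$ gives $x^2\leqslant y_{(n)}$, and hence $x$ is one of the elements whose join defines $y_{(n+1)}=\bigvee_{z^2\leqslant y_{(n)}}z$, so $x\leqslant y_{(n+1)}$. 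The ``in particular'' clause then follows at once: if $x$ is $y$-solvable, then $x^{(n)}\leqslant y$ for some $n$, whence $x\leqslant y_{(n)}\leqslant x_{(\omega)}\leqslant\mathrm{Solv}(y)$ (here using $y_{(n)}\leqslant y_{(\omega)}\leqslant\mathrm{Solv}(y)$), and taking the join over all $y$-solvable $x$ gives $\mathrm{solv}(y)\leqslant\mathrm{Solv}(y)$.

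For (b), the first step is to notice that distributivity of $L$ forces the monotonicity condition, hence a fortiori the weak monotonicity condition: if $a\leqslant b$ and $a'\leqslant b'$ then $bb'=(a\vee b)(a'\vee b')=aa'\vee ba'\vee ab'\vee bb'\geqslant aa'$, and taking $a=a'$, $b=b'$ yields $a^2\leqslant b^2$. Thus under the hypotheses of (b) the lattice $L$ is algebraic, distributive, and satisfies both the weak monotonicity and the weak Kaplansky conditions. Now I would simply chain together the available statements: Theorem 6.17 (distributive and algebraic) gives $\sqrt{y}=\mathrm{loc.solv}(y)$; Theorem 6.13(c) (algebraic, weak monotonicity, weak Kaplansky) gives $\mathrm{loc.solv}(y)=\mathrm{solv}(y)$; part (a) gives $\mathrm{solv}(y)\leqslant\mathrm{Solv}(y)$; and Proposition 7.1 gives $\mathrm{Solv}(y)\leqslant\sqrt{y}$. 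Reading this as $\sqrt{y}=\mathrm{loc.solv}(y)=\mathrm{solv}(y)\leqslant\mathrm{Solv}(y)\leqslant\sqrt{y}$ forces all four quantities to coincide, which is the claim.

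I do not anticipate a genuine obstacle. Part (a) is essentially a one-line induction once the identity $x^{(n+1)}=(x^2)^{(n)}$ is used, and part (b) is purely bookkeeping: verifying that the cited theorems apply and closing a short chain of inequalities. The only point that needs a moment's attention is the (elementary) derivation of weak monotonicity from distributivity so that Theorem 6.13(c) is applicable; if one preferred to avoid even that remark, one could instead quote a version of Theorem 6.13(c) phrased directly under distributivity, but the derivation above is clean and is implicitly used elsewhere in the paper.
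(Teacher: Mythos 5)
Your proof is correct and follows essentially the same route as the paper's: the same induction via $x^{(n+1)}=(x^2)^{(n)}$ for part (a), and the same chain of citations (Theorems 6.13(c) and 6.17, part (a), and Proposition 7.1) for part (b). Your explicit check that distributivity implies the (weak) monotonicity condition, so that Theorem 6.13(c) really applies, is a point the paper leaves implicit; the only blemish is the typo $x_{(\omega)}$ for $y_{(\omega)}$ in the ``in particular'' clause, which your parenthetical already corrects.
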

\begin{proof}
	(a): The implication is trivial for $n=0$. If it holds for $n$, then:
	\begin{equation*}
	x^{(n+1)}\leqslant y\Leftrightarrow(x^2)^{(n)}\leqslant y\Rightarrow x^2\leqslant y_{(n)}\Rightarrow x\leqslant y_{(n+1)},
	\end{equation*}
	and so it holds for $n+1$.
	
	(b): The last two equalities hold by Theorems 6.13(c) and 6.17, respectively. After that the first equality follows from (a), since $\mathrm{Solv}(y)\leqslant\sqrt{y}$ (by Proposition 7.1).   
\end{proof}
\begin{teo}
	Under the assumptions of Theorem 7.3, the closure operator $\mathrm{Solv}:L\to L$ is algebraic.
\end{teo}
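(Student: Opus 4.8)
The plan is to build on Theorem 7.3, which identifies $\mathrm{Solv}(x)$ with the countable join $x_{(\omega)}=\bigvee_{n\in\mathbb{N}}x_{(n)}$. This turns an a priori transfinite construction into an $\omega$-indexed one, and reduces everything to analysing the single step $x\mapsto x_{(1)}$.

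First I would introduce the map $T\colon L\to L$, $T(x)=x_{(1)}$, and record that $x_{(n)}=T^{n}(x)$ for every natural $n$, so that $\mathrm{Solv}=\bigvee_{n}T^{n}$ pointwise (by Theorem 7.3). The key claim is that $T$ is monotone and preserves directed joins. Monotonicity is immediate from the definition of $x_{(1)}$. For directed joins I would invoke Lemma 7.2 --- legitimate since $L$ is algebraic and weakly monotone --- to write $T(x)=\bigvee\{c\in L\mid c\text{ compact},\ c^{2}\leqslant x\}$. Now let $D\subseteq L$ be directed. Given a compact $c$ with $c^{2}\leqslant\bigvee D$, the weak Kaplansky condition guarantees that $c^{2}$ is compact, so $c^{2}\leqslant d$ for some $d\in D$, and hence $c\leqslant T(d)$ by Lemma 7.2. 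Taking the join over all such $c$ gives $T(\bigvee D)\leqslant\bigvee_{d\in D}T(d)$, and the reverse inequality is just monotonicity. This is the one place where all three hypotheses of Theorem 7.3 are really used, and it is the heart of the argument.

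From here it is formal. Since $T$ is monotone and preserves directed joins, so is each iterate $T^{n}$, i.e.\ each stage $x\mapsto x_{(n)}$. Finally, for directed $D$, commutativity of joins in the complete lattice $L$ yields
\[
\mathrm{Solv}\Big(\bigvee D\Big)=\bigvee_{n}T^{n}\Big(\bigvee D\Big)=\bigvee_{n}\bigvee_{d\in D}T^{n}(d)=\bigvee_{d\in D}\bigvee_{n}T^{n}(d)=\bigvee_{d\in D}\mathrm{Solv}(d),
\]
which is exactly the statement that $\mathrm{Solv}$ preserves directed joins.

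The only delicate point is the middle paragraph: one must check that Lemma 7.2 genuinely allows restricting to compact witnesses $c$, and that the weak Kaplansky condition is precisely what lets one push the compact element $c^{2}$ into a single member of the directed family $D$. The rest --- that iterates of a monotone directed-join-preserving map are again such, and that a pointwise join over $\mathbb{N}$ of such maps is again directed-join-preserving because two suprema commute --- is routine. An equivalent repackaging, if one prefers to argue via compact elements (an algebraic closure operator being exactly one determined by its restriction to compacts), is to prove by induction on $n$ that $(\bigvee D)_{(n)}=\bigvee_{d\in D}d_{(n)}$ for directed $D$, and then let $n$ range over $\mathbb{N}$.
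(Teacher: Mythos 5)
Your proposal is correct and follows essentially the same route as the paper: reduce via Theorem 7.3 to showing that the single step $x\mapsto x_{(1)}$ preserves directed joins, then use Lemma 7.2 to restrict to compact witnesses $c$ and the weak Kaplansky condition to push the compact element $c^{2}$ below a single member of the directed family. The only difference is that you spell out the routine bookkeeping (iterates and the interchange of the two joins) that the paper leaves implicit.
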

\begin{proof}
	For each $x\in L$, we have
	\begin{equation*}
	x_{(0)}=x,\,\,\,\,\,x_{(n+1)}=(x_{(n)})_{(1)}\,(\text{for each natural}\,n),\,\,\,\,\,\mathrm{Solv}(x)=x_{(0)}\vee x_{(1)}\vee x_{(2)}\vee\ldots
	\end{equation*}
	(the join over all finite ordinals = natural numbers; this follows from Theorem 7.3), and so it suffices to prove that the map $L\to L$ defined by $x\mapsto x_{(1)}$ preserves directed joins. To prove that is to prove the inequality $(\bigvee S)_{(1)}\leqslant\bigvee_{s\in S}s_{(1)}$, which, according to Lemma 7.2, is the same as
	\begin{equation*}
	c^2\leqslant\bigvee S\Rightarrow c\leqslant\bigvee_{s\in S}s_{(1)}
	\end{equation*} 
	for every compact $c\in L$ and every directed subset $S$ of $L$. Since $c^2$ is compact (by the weak Kaplansky condition) and $S$ is directed, $c^2\leqslant\bigvee S$ implies the existence of $s\in S$ with $c^2\leqslant s$, and then $c\leqslant s_{(1)}$, by definition of $s_{(1)}$.
\end{proof}
From Theorems 7.4 and 7.5, we obtain:
\begin{cor}
	Under the assumptions of Theorem 7.4(b), $L$ has algebraic radicals.\qed 
\end{cor}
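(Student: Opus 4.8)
The plan is to deduce this immediately from Theorems 7.4(b) and 7.5, the only real work being to unwind the definition of ``has algebraic radicals'' and to check that the hypotheses of the two cited theorems line up. By Definition 4.1, saying that $L$ has algebraic radicals means precisely that the closure operator $\sqrt{-}\colon L\to L$ preserves directed joins. So the goal is to show that $\sqrt{-}$ preserves directed joins under the assumptions of Theorem 7.4(b), namely: $L$ is algebraic, $L$ is distributive, and $L$ satisfies the weak Kaplansky condition.

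First I would invoke Theorem 7.4(b), which under exactly these hypotheses gives $\sqrt{y}=\mathrm{Solv}(y)$ for every $y\in L$. Hence $\sqrt{-}$ and $\mathrm{Solv}$ are the same map $L\to L$, and it suffices to prove that $\mathrm{Solv}$ preserves directed joins. Second, I would invoke Theorem 7.5, which asserts precisely that $\mathrm{Solv}\colon L\to L$ is an algebraic closure operator under ``the assumptions of Theorem 7.4''. Combining the two: $\mathrm{Solv}=\sqrt{-}$ preserves directed joins, so $L$ has algebraic radicals.

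The one point deserving a remark is the bookkeeping that the hypotheses of Theorem 7.4(b) supply everything Theorem 7.5 (through its reliance on Theorem 7.3, hence on the hypotheses of Theorem 6.13(c)) needs. Algebraicity and the weak Kaplansky condition are assumed outright; the weak monotonicity condition is not, but it is a consequence of distributivity in the sense of Definition 5.4: if $x\leqslant y$ then $y=x\vee y$, so $y^2=(x\vee y)(x\vee y)=x^2\vee xy\vee yx\vee y^2\geqslant x^2$. With this observation the hypotheses match and the argument closes.

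There is essentially no obstacle here: the substantive content has already been established in Theorems 7.3, 7.4, and 7.5, and the corollary is a formal splicing of them. The only thing one must not skip is the verification that distributivity implies the weak monotonicity condition, so that ``the assumptions of Theorem 7.4(b)'' genuinely imply ``the assumptions of Theorem 7.4'' as used in the statement of Theorem 7.5.
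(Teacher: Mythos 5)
Your proof is correct and follows exactly the paper's route: the corollary is stated there as an immediate consequence of Theorems 7.4 and 7.5, and your verification that distributivity implies the weak monotonicity condition (so that the hypotheses of Theorem 7.4(b) cover those of Theorem 7.3, under which Theorem 7.5 is stated) is precisely the bookkeeping the paper leaves implicit. The only nitpick is that Theorem 7.5 is stated under the assumptions of Theorem 7.3, not 7.4, but you clearly track this correctly in your final paragraph.
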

And, putting this together with Theorem 4.4, we also obtain:
\begin{teo}
	$\mathrm{Spec}(L)$ is a spectral space whenever the following conditions hold:
	\begin{itemize}
		\item [(a)] $L$ is compact, algebraic, and distributive;
		\item [(b)] $L$ satisfies the weak Kaplansky condition;
		\item [(c)] if $x$ and $y$ are compact elements in $L$, then there exists a compact $c\in L$ with $\sqrt{c}=\sqrt{xy}$.\qed
	\end{itemize}
\end{teo}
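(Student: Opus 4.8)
The plan is to deduce Theorem 7.7 directly from Theorem 4.4, so that the entire argument is a bookkeeping comparison of hypothesis lists plus one appeal to Section 7. Recall that Theorem 4.4 asserts that $\mathrm{Spec}(L)$ is spectral as soon as: (a) $L$ is compact, (b) $L$ is algebraic, (c) $L$ has algebraic radicals, and (d) for all compact $x,y\in L$ there is a compact $c\in L$ with $\sqrt{c}=\sqrt{xy}$. Under the hypotheses of Theorem 7.7, conditions (a) and (b) of Theorem 4.4 are part of hypothesis 7.7(a), and condition (d) of Theorem 4.4 is literally hypothesis 7.7(c). Hence the only point requiring an argument is condition (c) of Theorem 4.4, namely that $L$ has algebraic radicals.

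To verify that, I would simply observe that hypotheses 7.7(a) and 7.7(b) say exactly that $L$ is algebraic and distributive and satisfies the weak Kaplansky condition, which are precisely the assumptions of Theorem 7.4(b). Therefore Corollary 7.6 applies and yields that $L$ has algebraic radicals. With all four conditions of Theorem 4.4 now in force, Theorem 4.4 gives the spectrality of $\mathrm{Spec}(L)$, completing the proof.

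It is worth recalling where the real content sits, since the reduction above is essentially trivial once Section 7 is available. The substantive chain is: Theorem 7.4(b) identifies $\sqrt{y}$ with $\mathrm{Solv}(y)$ — this uses $\sqrt{y}=\mathrm{loc.solv}(y)$ from Theorem 6.17, $\mathrm{loc.solv}(y)=\mathrm{solv}(y)$ from Theorem 6.13(c), and the inequalities of Theorem 7.4(a) and Proposition 7.1 — and Theorem 7.5 shows $\mathrm{Solv}$ is an algebraic closure operator, whence Corollary 7.6. The key technical input, and what I would flag as the main obstacle if one were proving everything from scratch, is that $x\mapsto x_{(1)}$ preserves directed joins (Theorem 7.5), which relies on the weak Kaplansky condition to ensure $c^2$ is compact whenever $c$ is; once that is secured, the passage to Theorem 7.7 is purely formal.
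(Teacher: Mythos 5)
Your proposal is correct and follows exactly the paper's own route: the paper derives Theorem 7.7 by combining Corollary 7.6 (which gives algebraic radicals under the hypotheses of Theorem 7.4(b), themselves contained in 7.7(a)--(b)) with Theorem 4.4, whose remaining conditions are directly among the hypotheses. Your tracing of the supporting chain through Theorems 6.13, 6.17, 7.4, and 7.5 matches the paper's dependency structure as well.
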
 
\begin{defi}\emph{
	We will say that $L$ satisfies the \textit{Kaplansky condition} if, whenever $x,y\in L$ are compact, so is $xy$.}
\end{defi}
The term \textit{Kaplansky condition} is suggested by Kaplansky's definition of a neo-commutative ring (see Fact 1.5); this also explains why we used the term \textit{weak Kaplansky condition} before. Since the Kaplansky condition implies conditions 7.7(b) and 7.7(c), Theorem 7.7 gives:
\begin{cor}
	$\mathrm{Spec}(L)$ is a spectral space whenever the following conditions hold:
	\begin{itemize}
		\item [(a)] $L$ is compact, algebraic, and distributive;
		\item [(b)] $L$ satisfies the Kaplansly condition.\qed
	\end{itemize}
\end{cor}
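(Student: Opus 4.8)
The plan is to obtain this corollary as a direct specialization of Theorem 7.7, so the only work is to check that hypotheses (a)--(b) here imply the three hypotheses of that theorem. Condition (a) of the corollary is word for word condition 7.7(a), so there is nothing to do there. For condition 7.7(b), I would observe that the Kaplansky condition of Definition 7.8 implies the weak Kaplansky condition of Definition 6.12: given a compact element $x\in L$, apply the Kaplansky condition with both compact elements taken equal to $x$, concluding that $x^2=xx$ is compact. For condition 7.7(c), let $x$ and $y$ be compact elements of $L$; by the Kaplansky condition the product $xy$ is itself compact, so one may simply take $c=xy$, and then $\sqrt{c}=\sqrt{xy}$ holds trivially. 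Having verified 7.7(a), 7.7(b) and 7.7(c), Theorem 7.7 immediately gives that $\mathrm{Spec}(L)$ is a spectral space.

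I do not expect any genuine obstacle here: the mathematical content is entirely carried by Theorem 7.7, whose proof in turn rests on Corollary 7.6 (that $L$ has algebraic radicals, via Theorems 7.4(b) and 7.5) together with Theorem 4.4. The only point worth stressing in the write-up is that one should resist proving the corollary from first principles: the substantive step---that algebraicity, distributivity and the weak Kaplansky condition already force $\sqrt{-}\colon L\to L$ to preserve directed joins---has been done earlier, and the Kaplansky condition is singled out here precisely because it is the most readily recognizable single hypothesis that implies both 7.7(b) and 7.7(c) at once, and because it is the lattice-theoretic counterpart of Kaplansky's notion of a neo-commutative ring (cf. Fact 1.5). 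Accordingly, the proof is just the two one-line verifications above followed by an appeal to Theorem 7.7.
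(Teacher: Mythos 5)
Your proposal is correct and matches the paper's own argument exactly: the paper proves the corollary by noting that the Kaplansky condition implies conditions 7.7(b) and 7.7(c) and then invoking Theorem 7.7, which is precisely the two one-line verifications you spell out. Nothing is missing.
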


\section{Internal pseudogroupoids}

Let $\mathcal{C}$ be a category with finite limits. Given a span $S=$
\begin{equation*}
\xymatrix{S_0&S_1\ar[l]_-{\pi}\ar[r]^-{\pi'}&S_0'}
\end{equation*}
in $\mathcal{C}$, let us write
\begin{equation*}
\xymatrix{S_1\ar[r]^{\pi'}\ar[d]_{\pi}&S_0'&S_1\ar[l]_{\pi'}\ar[d]^{\pi}\\S_0&S_4\ar@{.>}[ul]|{\pi_1}\ar@{.>}[ur]|{\pi_3}\ar@{.>}[dl]|{\pi_2}\ar@{.>}[dr]|{\pi_4}&S_0\\S_1\ar[u]^{\pi}\ar[r]_{\pi'}&S_0'&S_1\ar[l]^{\pi'}\ar[u]_{\pi}}
\end{equation*}
for the commutative diagram whose dotted arrows form the limiting cone over the diagram formed by the solid arrows. Accordingly, for any object $X$ in $\mathcal{C}$, morphisms $f:X\to S_4$ can be displayed as $\langle f_1,f_2,f_3,f_4\rangle$, where $f_1$, $f_2$, $f_3$, and $f_4$ are morphisms from $X$ to $S_1$ making the diagram
\begin{equation*}
\xymatrix{S_1\ar[r]^{\pi'}\ar[d]_{\pi}&S_0'&S_1\ar[l]_{\pi'}\ar[d]^{\pi}\\S_0&X\ar@{.>}[ul]|{f_1}\ar@{.>}[ur]|{f_3}\ar@{.>}[dl]|{f_2}\ar@{.>}[dr]|{f_4}&S_0\\S_1\ar[u]^{\pi}\ar[r]_{\pi'}&S_0'&S_1\ar[l]^{\pi'}\ar[u]_{\pi}}
\end{equation*}
commute.
\begin{defi}\emph{
	An \textit{internal pseudogroupoid} in $\mathcal{C}$ is a pair $(S,m)$ in which $S$ is a span and $m:S_4\to S_1$ a morphism satisfying the following conditions for every morphism of the form $\langle f_1,f_2,f_3,f_4\rangle:X\to S_4$:
	\begin{itemize}
		\item [(a)] $\pi m\langle f_1,f_2,f_3,f_4\rangle=\pi f_3$ and $\pi'm\langle f_1,f_2,f_3,f_4\rangle=\pi'f_3$;
		\item [(b)] $m\langle f_1,f_2,f_3,f_4\rangle$ does not depend on $f_3$ in the sense that 
		\begin{equation*}
		m\langle f_1,f_2,f_3,f_4\rangle=m\langle f_1,f_2,f_3',f_4\rangle,
		\end{equation*}
		whenever the right-hand side of this equality makes sense;
		\item [(c)] $f_1=f_2\Rightarrow m\langle f_1,f_2,f_3,f_4\rangle=f_4$; 
		\item [(d)] $f_2=f_4\Rightarrow m\langle f_1,f_2,f_3,f_4\rangle=f_1$;
		\item [(e)] $m\langle m\langle f_1,f_2,f_3,f_4\rangle,f_5,f_3',f_6\rangle=m\langle f_1,f_2,f_3',m\langle f_4,f_5,f_3'',f_6\rangle\rangle$, whenever both sides of this equality make sense.
	\end{itemize}} 
\end{defi}
\begin{remark}\emph{
	Definition 8.1 is the same Definition 3.2 of \cite{[JP2001]}, except that \cite{[JP2001]} begins with the category $\mathsf{Set}$ of sets, and then defines an internal pseudogroupoid in $\mathcal{C}$ via the Yoneda embedding.}
\end{remark}

\section{Commutators in general categories}

Let $\mathcal{C}$ be a well-powered finitely well complete category, which means that $\mathcal{C}$ satisfies the following conditions:
\begin{itemize}
	\item $\mathcal{C}$ has finite limits;
	\item $\mathcal{C}$ is well-powered, that is, for every object $C$ in it, the class $\mathrm{Sub}(C)$ of isomorphism-classes $[X,x]$ of pairs $(X,x)$, where $x:X\to C$ is a monomorphism, is a set;
	\item each $\mathrm{Sub}(C)$, considered as an ordered set, is a complete lattice with meets being the limits of suitable diagrams in $\mathcal{C}$.
\end{itemize}
In particular, for every object $A$ in $\mathcal{C}$, we have the complete lattice $\mathrm{ER}(A)$ of (isomorphism classes of) internal equivalence relations on $A$.

The assignment $A\mapsto\mathrm{ER}(A)$ determines a functor
\begin{equation*}
\mathrm{ER}:\mathcal{C}^{\mathrm{op}}\to\mathsf{CompLat}_{\wedge},
\end{equation*}
where $\mathsf{CompLat}_{\wedge}$ is the category of complete lattices and arbitrary-meet-preserving maps. Under this functor, for a morphism $\alpha:A\to B$ in $\mathcal{C}$, the induced map $\mathrm{ER}(\alpha):\mathrm{ER}(B)\to\mathrm{ER}(A)$ is defined by pulling back along $\alpha\times \alpha:A\times A\to B\times B$.
\begin{defi}\emph{
	A \textit{commutator} $\mathrm{C}$ on $\mathcal{C}$ is a (large) family of binary operations 
	\begin{equation*}
	\mathrm{C}_A:\mathrm{ER}(A)\times\mathrm{ER}(A)\to\mathrm{ER}(A),
	\end{equation*}
	defined for each object $A$ of $\mathcal{C}$, written as $\mathrm{C}_A(x,y)=xy$ and satisfying the following conditions:
	\begin{itemize}
		\item [(a)] $xy\leqslant x\wedge y$ for all $A$ and all $x,y\in\mathrm{ER}(A)$\footnote{Here and below, $xy$ should not be confused with the composite of $x$ and $y$ as relations $A\to A$.};
		\item [(b)] $(\mathrm{ER}(\alpha)(x))(\mathrm{ER}(\alpha)(y))\leqslant \mathrm{ER}(\alpha)(xy)$ for all $\alpha:A\to B$ and all $x,y\in\mathrm{ER}(B)$.
	\end{itemize}} 
\end{defi}

Definition 9.1 is a simplified version of Definition 4.4 of \cite{[J2008]}, where a more sophisticated \textit{Galois structure with commutators} is introduced. 

From Lemma 2.6 we immediately obtain:
\begin{teo}
	For every commutator $\mathrm{C}$ on $\mathcal{C}$, every object $A$ in $\mathcal{C}$, and $\mathrm{ER}(A)$ equipped with the multiplication $\mathrm{C}_A$ (making it a complete multiplicative lattice) the space $\mathrm{Spec}(\mathrm{ER}(A))$ is sober. 
\end{teo}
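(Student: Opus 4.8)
The plan is to observe that Theorem 9.2 is essentially a formal consequence of Lemma 2.6 together with the definition of a commutator, so the proof is extremely short. First I would check that for a fixed object $A$ of $\mathcal{C}$, the ordered set $\mathrm{ER}(A)$ is a complete lattice: this is guaranteed by the hypothesis that $\mathcal{C}$ is well-powered and finitely well complete, which is exactly what makes each $\mathrm{Sub}(C)$, and hence each $\mathrm{ER}(A)$, a complete lattice. Next I would equip $\mathrm{ER}(A)$ with the binary operation $\mathrm{C}_A$, writing $xy=\mathrm{C}_A(x,y)$; condition (a) of Definition 9.1 says precisely that $xy\leqslant x\wedge y$ for all $x,y\in\mathrm{ER}(A)$, which is exactly the single axiom required in Definition 2.1. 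Hence $\mathrm{ER}(A)$, with this multiplication, is a complete multiplicative lattice in the sense of the paper.

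Once that identification is made, the conclusion is immediate: Lemma 2.6 asserts that for \emph{any} complete multiplicative lattice $L$, the space $\mathrm{Spec}(L)$ is sober. Applying this with $L=\mathrm{ER}(A)$ gives that $\mathrm{Spec}(\mathrm{ER}(A))$ is sober, which is the assertion of the theorem. Note that condition (b) of Definition 9.1 (compatibility of $\mathrm{C}$ with the functor $\mathrm{ER}$) plays no role here; it will be needed only later, to make the assignment $A\mapsto\mathrm{ER}(A)$ into a functor $\mathcal{C}^{\mathrm{op}}\to\mathsf{CML}$ and hence, via Theorem 3.2(c), into a functor valued in sober spaces, but for the soberness of a single $\mathrm{Spec}(\mathrm{ER}(A))$ only axiom (a) is used.

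There is no real obstacle: the only thing to be careful about is the bookkeeping that $\mathrm{ER}(A)$ is genuinely a complete lattice (not merely a lattice) — this is where the "finitely well complete" and "well-powered" hypotheses on $\mathcal{C}$ are consumed, via the stipulation that each $\mathrm{Sub}(C)$ is a complete lattice with meets computed as limits in $\mathcal{C}$. Everything else is a direct citation. So the write-up is simply: by the hypotheses on $\mathcal{C}$, $\mathrm{ER}(A)$ is a complete lattice; by Definition 9.1(a), $\mathrm{C}_A$ makes it a complete multiplicative lattice; by Lemma 2.6, $\mathrm{Spec}(\mathrm{ER}(A))$ is sober. $\qed$
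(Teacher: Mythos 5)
Your proposal is correct and matches the paper's own argument, which derives Theorem 9.2 immediately from Lemma 2.6 after noting that Definition 9.1(a) makes $\mathrm{ER}(A)$ a complete multiplicative lattice. Your observation that condition 9.1(b) is not needed is also made explicitly in the paper, right after the theorem.
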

Note that condition 9.1(b) plays no role here and Theorem 9.2 would still be correct without requiring it.

Let us now establish a connection with the commutators in the sense of \cite{[JP2001]}. Recall that a morphism $\varphi:S\to T$ in the category $\mathsf{Span}(\mathcal{C})$ of spans in $\mathcal{C}$ is defined as a triple $\varphi=(\varphi_0,\varphi_1,\varphi_0')$ of morphisms, making the diagram
\begin{equation*}
\xymatrix{S_0\ar[d]_{\varphi_0}&S_1\ar[l]\ar[d]|{\varphi_1}\ar[r]&S_0'\ar[d]^{\varphi_0'}\\T_0&T_1\ar[l]\ar[r]&T_0'}
\end{equation*}
whose two rows display $S$ and $T$, respectively, commute. The commutator of a span is defined as follows:
\begin{defi}\emph{
	(Definition 5.2 of \cite{[JP2001]}) Let $S$ be a span and $\Phi_S$ the class of all span morphisms $\varphi:S\to T$, in which $T$ is a span that admits an internal pseudogroupoid structure. For $(\varphi:S\to T)\in\Phi_S$, let $\mathrm{Eq}(\varphi_1)$ be the element of $\mathrm{ER}(A)$ corresponding to the kernel pair of $\varphi_1$. The \textit{commutator} $\mathrm{C}(S)$ of $S$ is defined as
	\begin{equation*}
	\mathrm{C}(S)=\bigwedge_{\varphi\in\Phi_S}\mathrm{Eq}(\varphi_1)\in\mathrm{ER}(A).
	\end{equation*}}
\end{defi}
When $\mathcal{C}$ is Barr exact, and $x=[U,u]$ and $y=[V,v]$ are elements of $\mathrm{ER}(A)$, the commutator $xy$ is defined as 
\begin{equation*}
xy=\mathrm{C}(S)\in\mathrm{ER}(A),	
\end{equation*}
where $S$ is the span
\begin{equation*}
\xymatrix{A/u&A\ar[l]\ar[r]&A/v}
\end{equation*}
in which $A\to A/u$ and $A\to A/v$ are the coequalizers of $(U,u)$ and $(V,v)$, respectively.

If $\mathcal{C}$ is not Barr exact, we have to modify this construction as follows:
\begin{itemize}
	\item Use (any) existing limit-preserving full embedding $Y:\mathcal{C}\to\mathcal{\hat{C}}$ with Barr exact $\mathcal{\hat{C}}$ satisfying the same conditions as we required for $\mathcal{C}$; for example it can be the Yoneda embedding.
	\item Call an internal pseudogroupoid $(S,m)$ in $\mathcal{\hat{C}}$ \textit{almost representable} if $S_1$ belongs to the (replete) image of $Y$. 
	\item Given $x=[U,u]$ and $y=[V,v]$ in $\mathrm{ER}(A)$, write $Y(x)$ and $Y(y)$ for the corresponding elements of $\mathrm{ER}(Y(A))$, and form the span $S=$
	\begin{equation*}
	\xymatrix{Y(A)/Y(x)&Y(A)\ar[l]\ar[r]&Y(A)/Y(y)}
	\end{equation*}
	in $\mathcal{\hat{C}}$.
	\item Consider the commutator $\mathrm{C}(S)\in\mathrm{ER}(A)$. Since $Y$ preserves all existing limits and is fully faithful, there exist a unique object in $\mathrm{ER}(A)$ corresponding via $Y$ to $\mathrm{C}(S)$, and we define $xy$ as that unique object. That is, $xy$ is defined by $Y(xy)=\mathrm{C}(S)$, in the notation above.
	\item Then, using the same arguments as in the proof of Proposition 5.4(c) of \cite{[JP2001]}, we obtain $xy\leq x\wedge y$ for all $x,y\in \mathrm{ER}(A)$. This makes $\mathrm{ER}(A)$ a complete multiplicative lattice for each object $A$ in $\mathcal{C}$, which is a special case of the complete multiplicative lattice of Theorem 9.2. In particular, commutators in the sense of \cite{[JP2001]} are special cases of commutators in the sense of Definition 9.1.	 
\end{itemize}

\section{Commutators of congruences of universal algebras}

In this section we assume that $\mathcal{C}$ is a variety of universal algebras; in particular it is Barr exact. As follows from what we explained in the previous section, for $A$ in $\mathcal{C}$, the complete multiplicative lattice $\mathrm{ER}(A)$ can be identified with the complete multiplicative lattice $\mathrm{Cong}(A)$ of all congruences on $A$, whose multiplication is defined as the commutator (operation) in the sense of \cite{[JP2001]}. The following two important special cases are mentioned in \cite{[JP2001]}: 
\begin{itemize}
   	\item [(a)] If $\mathcal{C}$ is a Mal'tsev variety (=congruence permutable variety) of universal algebras, then the commutator we use is the same as the Smith commutator \cite{[S1976]}, which was the original definition of commutator of two congruences that motivated further developments.
   	\item [(b)] More generally, if $\mathcal{C}$ is a congruence modular variety of universal algebras, then the commutator coincides with the modular commutator studied e.g. in \cite{[FM1987]} (see also references there, especially \cite{[HH1979]}, \cite{[H1979]}, \cite{[G1980]}, and \cite{[G1983]}). 
\end{itemize}

As follows from the explanation at the end of the previous section, Theorem 9.2 applies here, and so $\mathrm{Spec}(\mathrm{Cong}(A))$ is a sober space for each algebra $A$ in any variety of universal algebras.

Is $\mathrm{Spec}(\mathrm{Cong}(A))$ a spectral space? To find reasonable sufficient conditions for that we have to analyze conditions required in Theorems 3.6, 4.4, and 7.7, and in Corollary 7.9 in the case $L=\mathrm{Cong}(A)$ we are considering here. We will consider now only conditions required in Corollary 7.9:
\begin{remark}\emph{
	Let $L$ be the complete multiplicative lattice $\mathrm{Cong}(A)$. Then:
	\begin{itemize}
		\item [(a)] $x\in\mathrm{Cong}(A)$ is compact if and only if it is finitely generated as a congruence on $A$; in particular, $L$ is compact if and only if $A\times A$ is finitely generated as a congruence on $A$.
		\item [(b)] $\mathrm{Cong}(A)$ is always algebraic.
		\item [(c)] $L$ is distributive whenever $\mathcal{C}$ is congruence modular (see e.g. Proposition 4.3 in \cite{[FM1987]}). Note that in contrast to this, the monotonicity condition always holds, obviously.
		\item [(d)] $L$ satisfies the Kaplansky condition if and only if, whenever $x$ and $y$ are finitely generated congruences on $A$, so is $xy$.  
	\end{itemize}
	Here $(\mathrm{a})$ and $(\mathrm{b})$ are well known and easy to check directly, and $(\mathrm{d})$ immediately follows from $(\mathrm{a})$.} 
\end{remark}
This gives:
\begin{teo}
	The space $\mathrm{Spec}(\mathrm{Cong}(A))$ is spectral whenever the following conditions hold:
	\begin{itemize}
		\item [(a)] there is a congruence modular variety to which $A$ belongs;
		\item [(b)] $A\times A$ is finitely generated as a congruence on $A$;
		\item [(c)] if congruences $x$ and $y$ on $A$ are finitely generated, then so is their commutator $xy$.\qed
	\end{itemize}
\end{teo}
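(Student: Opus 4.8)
The plan is to deduce this directly from Corollary 7.9, applied to the complete multiplicative lattice $L=\mathrm{Cong}(A)$ whose multiplication is the commutator operation described at the end of Section 9; essentially all the work has already been done, so what remains is only to translate the hypotheses of Corollary 7.9 into the statements collected in Remark 10.1.

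Recall that Corollary 7.9 asserts that $\mathrm{Spec}(L)$ is spectral whenever $L$ is compact, algebraic, and distributive, and $L$ satisfies the Kaplansky condition. So I would verify these conditions in turn for $L=\mathrm{Cong}(A)$. Compactness of $L$ holds by Remark 10.1(a), which says that $L$ is compact precisely when $A\times A$ is finitely generated as a congruence on $A$; this is hypothesis (b). Algebraicity of $L$ holds with no extra assumption, by Remark 10.1(b). Distributivity of $L$ holds by Remark 10.1(c) as soon as $A$ lies in a congruence modular variety, which is hypothesis (a). Finally, the Kaplansky condition for $L$ is equivalent, by Remark 10.1(d), to the requirement that the commutator of two finitely generated congruences again be finitely generated; this is hypothesis (c).

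With all four conditions of Corollary 7.9 thus verified, that corollary yields that $\mathrm{Spec}(L)=\mathrm{Spec}(\mathrm{Cong}(A))$ is spectral, which is the assertion of the theorem.

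There is essentially no obstacle here: the theorem is a bookkeeping corollary, and the substantive content lives entirely in Corollary 7.9 and in Remark 10.1. The only point deserving a moment's care is to make sure that the commutator multiplication on $\mathrm{Cong}(A)$ used throughout this section really is the modular commutator of \cite{[FM1987]} on a congruence modular variety, so that the distributivity invoked via Remark 10.1(c) genuinely applies --- but this identification is exactly what was recorded at the beginning of Section 10, in item (b) of the list there, and it relies only on the general comparison of the commutator of \cite{[JP2001]} with the classical commutators in the Mal'tsev and congruence modular cases.
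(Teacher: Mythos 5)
Your proposal is correct and follows exactly the paper's intended argument: the theorem is derived by applying Corollary 7.9 to $L=\mathrm{Cong}(A)$ and translating its hypotheses via Remark 10.1(a)--(d), with the identification of the multiplication as the modular commutator coming from the discussion at the start of Section 10. Nothing is missing.
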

\begin{remark}\emph{
	It is very easy to find many examples where conditions 10.2(a) and 10.2(b) hold, but 10.2(c) is a very heavy condition. For example it holds for all (not necessarily unital) commutative rings, but not for all rings (see Fact 1.5), not for all groups (see e.g. \cite{[AR2005]}), and not for all Lie algebras (see e.g. \cite{[A2006]}).} 
\end{remark}
\begin{remark}\emph{
	Once the space $\mathrm{Spec}(\mathrm{Cong}(A))$ is considered, it is interesting to know of course how large it is, and, in particular, whether or not $\mathrm{Cong}(A)$ has enough prime elements. Theorem 5.7 answers the last part of this question, and, in particular tells us that:
	\begin{itemize}
		\item [(a)] When $A$ is a group, finitely generated as its normal subgroup, $\mathrm{Cong}(A)$ has enough prime elements if and only if $A$ is a perfect group. 
		\item [(b)] When $A$ is a ring, finitely generated as its ideal, $\mathrm{Cong}(A)$ has enough prime elements if and only if $A\cdot A=A$. In particular, $\mathrm{Cong}(A)$ has enough prime elements whenever $A$ is unital ring (which is always generated by its identity element).    
	\end{itemize}}
\end{remark}
\begin{remark}\emph{
	Theorem 10.2 should be compared with the following two theorems, which also say $\mathrm{Spec}(\mathrm{Cong}(A))$ is spectral, but under stronger conditions:
	\begin{itemize}
		\item [(a)] Theorem 2.9 of \cite{[A1989]}, whose requirements (although it speaks about ideals instead of congruences) can be reformulated as all requirements of our Theorem 10.2 together with: (i) $A$ belongs to an ideal determined variety; (ii) every maximal congruence in $A$ is prime.
		\item [(b)] Theorem 3.3 of \cite{[A1993]}, where $\mathrm{Cong}(A)$ is required to satisfy the ascending chain condition. This immediately implies 10.2(b) and 10.2(c), while 10.2(a) is required in \cite{[A1993]} from the beginning.
	\end{itemize}} 
\end{remark}

\section{Ideals in an abstract commutative world}

In this section $\mathcal{V}$ denotes a variety of universal algebras, and $\mathcal{C}$ the variety of universal algebras obtained from $\mathcal{V}$ by adding a commutative semigroup structure whose operation is written as $\cdot$ and such that, for all $A$ in $\mathcal{C}$ and all $a\in A$, the map $a\cdot(-):A\to A$ is a morphism in $\mathcal{V}$. For subsets $S$ and $T$ of $A$, we will write
\begin{equation*}
S\cdot T=\{s\cdot t\,|\,s\in S,\,t\in T\},
\end{equation*}
while the $\mathcal{V}$-subalgebra of $A$ generated by $S$ will be denoted by $\langle S\rangle$. We will also use the letters $v$ and $w$ to denote terms in the algebraic theory of $\mathcal{V}$ of suitable arities.
\begin{defi}\emph{
	For $A$ in $\mathcal{C}$, an \textit{ideal} of $A$ is a subalgebra $x$ of $A$ with 
	\begin{equation*}
	(a\in A\,\,\,\text{and}\,\,\,s\in x)\Rightarrow a\cdot s\in x.
	\end{equation*}}	 
\end{defi}
\begin{lem}\emph{
	The ideal of $A$ generated by a subset $S$ of $A$ is $\langle A\cdot S\rangle$.}
\end{lem}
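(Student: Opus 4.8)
The plan is to check that the $\mathcal V$-subalgebra $\langle A\cdot S\rangle$ has the three properties characterising the ideal of $A$ generated by $S$: it contains $S$, it is an ideal, and it is contained in every ideal of $A$ that contains $S$. Two of these are formal. For $S\subseteq\langle A\cdot S\rangle$ one observes $S\subseteq A\cdot S$ (here one uses that the multiplication has an identity $1$, as in the motivating semiring and monoid examples, so that $s=1\cdot s$), whence $S\subseteq\langle A\cdot S\rangle$ since $\langle A\cdot S\rangle$ is a $\mathcal V$-subalgebra. For minimality, if $x$ is an ideal with $S\subseteq x$, then $A\cdot S\subseteq A\cdot x\subseteq x$ by the definition of an ideal, and then $\langle A\cdot S\rangle\subseteq x$ because $x$ is a $\mathcal V$-subalgebra.

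The substantive step is that $\langle A\cdot S\rangle$ is itself an ideal. It is a $\mathcal V$-subalgebra by construction, so all one has to verify is $a\cdot t\in\langle A\cdot S\rangle$ for $a\in A$ and $t\in\langle A\cdot S\rangle$. I would use the hypothesis that $a\cdot(-)\colon A\to A$ is a $\mathcal V$-morphism, together with the standard fact that a $\mathcal V$-homomorphism carries the subalgebra generated by a set $T$ into the subalgebra generated by the image of $T$; this gives $a\cdot\langle A\cdot S\rangle\subseteq\langle a\cdot(A\cdot S)\rangle$. It then remains to see $a\cdot(A\cdot S)\subseteq A\cdot S$, which follows from associativity (and commutativity) of $\cdot$: $a\cdot(a'\cdot s)=(a\cdot a')\cdot s\in A\cdot S$ for $a'\in A$, $s\in S$. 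Hence $a\cdot\langle A\cdot S\rangle\subseteq\langle A\cdot S\rangle$.

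Equivalently, and more explicitly, one can write $\langle A\cdot S\rangle$ as the set of all $w(a_1\cdot s_1,\dots,a_n\cdot s_n)$ with $w$ a $\mathcal V$-term of arity $n$, $a_i\in A$ and $s_i\in S$ — this being the usual term description of the $\mathcal V$-subalgebra generated by $A\cdot S$ — and then check directly that this set is closed under the operations of $\mathcal V$ (clear) and under each $a\cdot(-)$. The latter check is where the morphism axiom is used, in the form $a\cdot w(y_1,\dots,y_n)=w(a\cdot y_1,\dots,a\cdot y_n)$, which lets one push $a\cdot(-)$ past $w$; after reassociating $a\cdot(a_i\cdot s_i)=(a\cdot a_i)\cdot s_i$ one lands back in the same set.

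The step requiring the most care is the (elementary) term induction behind ``a homomorphism sends $\langle T\rangle$ into $\langle f(T)\rangle$'': every element of $\langle T\rangle$ has the form $w(t_1,\dots,t_n)$ for a $\mathcal V$-term $w$ and $t_i\in T$, and $f\bigl(w(t_1,\dots,t_n)\bigr)=w\bigl(f(t_1),\dots,f(t_n)\bigr)$. This is exactly the ingredient that makes the morphism property of $a\cdot(-)$ bite, and it is why the letters $v,w$ for $\mathcal V$-terms were introduced just before the statement.
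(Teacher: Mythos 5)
Your substantive step --- that $\langle A\cdot S\rangle$ is closed under each map $a\cdot(-)$ --- is exactly the paper's argument: the paper writes a general element of $\langle A\cdot S\rangle$ as $v(a_1\cdot s_1,\ldots,a_n\cdot s_n)$, pushes $a\cdot(-)$ past $v$ using the hypothesis that $a\cdot(-)$ is a morphism in $\mathcal{V}$, and reassociates to get $v(a\cdot a_1\cdot s_1,\ldots,a\cdot a_n\cdot s_n)$. Your more abstract packaging via ``a homomorphism carries $\langle T\rangle$ into $\langle f(T)\rangle$'' is the same computation, and the minimality check you add is correct and harmless (the paper leaves it implicit).

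The problem is your verification that $S\subseteq\langle A\cdot S\rangle$. You justify $S\subseteq A\cdot S$ by writing $s=1\cdot s$, but in this section $\cdot$ is only required to be a commutative \emph{semigroup} operation: there is no identity element in general (Corollary 11.6 singles out the monoid case as a special case, and Example 11.7(a) is the variety of not necessarily unital commutative rings). Without an identity the containment genuinely fails: take $\mathcal{V}$ to be abelian groups, $A=2\mathbb{Z}$, and $S=\{2\}$; then $\langle A\cdot S\rangle=4\mathbb{Z}$ does not contain $2$, whereas the smallest ideal of $A$ containing $2$ is $2\mathbb{Z}$ itself. So this step cannot be repaired in the stated generality. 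For what it is worth, the paper's own proof silently sidesteps the point: it only verifies the ideal property (``we only need to show that $a\cdot\langle A\cdot S\rangle\subseteq\langle A\cdot S\rangle$'') and never addresses whether $S$ lies in $\langle A\cdot S\rangle$. Your proposal has in effect surfaced a real ambiguity in what ``the ideal generated by $S$'' means here; but read as ``the smallest ideal containing $S$,'' your argument for that clause imports a unit that the hypotheses do not provide.
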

\begin{proof}
	We only need to show that $a\cdot\langle A\cdot S\rangle\subseteq\langle A\cdot S\rangle$. Indeed, every element of $\langle A\cdot S\rangle$ can be presented in the form $v(a_1\cdot s_1,\ldots,a_n\cdot s_n)$ with $a_1,\ldots,a_n\in A$ and $s_1,\ldots,s_n\in S$, and we have
	\begin{equation*}
	a\cdot v(a_1\cdot s_1,\ldots,a_n\cdot s_n)=v(a\cdot a_1\cdot s_1,\ldots,a\cdot a_n\cdot s_n)\in\langle A\cdot S\rangle,
	\end{equation*}
	simply because $a\cdot a_1,\ldots,a\cdot a_n$ belong to $A$.
\end{proof}
The set $\mathrm{Id}(A)$ of all ideals of $A$ obviously forms a complete lattice whose meets are ordinary intersections of subalgebras, and (as easily follows from Lemma 11.2) whose joins are joins in the lattice of $\mathcal{V}$-subalgebras of $A$. Moreover, it is a complete multiplicative lattice with the multiplication defined by $xy=\langle x\cdot y\rangle$. The fact that the so defined $xy$ is indeed an ideal easily follows from Lemma 11.2. And the inequality $xy\leqslant x\wedge y$ is also straightforward.
\begin{prop}
	$\mathrm{Id}(A)$ is distributive.
\end{prop}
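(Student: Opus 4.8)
The plan is to exploit commutativity of the multiplication on $\mathrm{Id}(A)$ (it is induced by the commutative operation $\cdot$, so $xy=\langle x\cdot y\rangle=\langle y\cdot x\rangle=yx$) in order to reduce the two identities defining distributivity to the single identity $x(y\vee z)=xy\vee xz$, which I would then prove by a double inequality. Throughout I would use that, by Lemma 11.2, joins in $\mathrm{Id}(A)$ coincide with joins of $\mathcal{V}$-subalgebras, so that $y\vee z=\langle y\cup z\rangle$ and, in particular, $y\vee z$ contains $y$ and $z$ as subsets.

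The easy half is $xy\vee xz\leqslant x(y\vee z)$: since $y\subseteq y\vee z$ we get $x\cdot y\subseteq x\cdot(y\vee z)$, hence $xy=\langle x\cdot y\rangle\leqslant\langle x\cdot(y\vee z)\rangle=x(y\vee z)$, and symmetrically $xz\leqslant x(y\vee z)$; take the join. For the reverse inequality, note first that $xy\vee xz=\langle(x\cdot y)\cup(x\cdot z)\rangle$ (join of subalgebras again), so it suffices to prove $x\cdot(y\vee z)\subseteq\langle(x\cdot y)\cup(x\cdot z)\rangle$, because applying $\langle-\rangle$ then yields $x(y\vee z)\leqslant xy\vee xz$. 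Fix $s\in x$ and $t\in y\vee z$. Writing $t=w(b_1,\dots,b_m)$ for a $\mathcal{V}$-term $w$ and generators $b_1,\dots,b_m\in y\cup z$, and using that the map $s\cdot(-):A\to A$ is a morphism in $\mathcal{V}$, we obtain $s\cdot t=w(s\cdot b_1,\dots,s\cdot b_m)$, where $s\cdot b_i\in x\cdot y$ when $b_i\in y$ and $s\cdot b_i\in x\cdot z$ when $b_i\in z$. Thus $s\cdot t$ lies in the $\mathcal{V}$-subalgebra generated by $(x\cdot y)\cup(x\cdot z)$, as required; this gives $x(y\vee z)=xy\vee xz$, and then $(x\vee y)z=xz\vee yz$ by commutativity.

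I do not anticipate a real obstacle here: the only point needing a moment's care is the reduction of an arbitrary $t\in y\vee z$ to a term expression in generators of $y\cup z$ and the observation that $s\cdot(-)$ commutes with every operation of $\mathcal{V}$, nullary ones included (for a constant $c$ one has $s\cdot c=c$, which already lies in the subalgebra $\langle(x\cdot y)\cup(x\cdot z)\rangle$). Everything else is routine unwinding of the definitions $xy=\langle x\cdot y\rangle$ and of joins in $\mathrm{Id}(A)$.
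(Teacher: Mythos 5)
Your proof is correct and follows essentially the same route as the paper's: reduce to the single inequality $x(y\vee z)\leqslant xy\vee xz$ via commutativity and monotonicity, then write an element of $x\cdot(y\vee z)$ as $s\cdot v(\dots)$ with generators drawn from $y\cup z$ and pull $s\cdot(-)$ through the term. Your extra remarks (identifying $xy\vee xz$ with $\langle(x\cdot y)\cup(x\cdot z)\rangle$ and handling nullary operations) only make explicit details the paper leaves tacit.
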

\begin{proof}
	Since $\mathrm{Id}(A)$ (obviously) satisfies the monotonicity condition and its multiplication is commutative, we only need to prove that $x(y\vee z)\leqslant xy\vee xz$ for all $x,y,z\in\mathrm{Id}(A)$. To prove that inequality is to prove that $x\cdot(y\vee z)\leqslant xy\vee xz$. Every element of $x\cdot(y\vee z)$ can be presented in the form $s\cdot v(t_1,\ldots,t_m,u_1,\ldots,u_n)$, and, since $s\cdot v(t_1,\ldots,t_m,u_1,\ldots,u_n)=v(s\cdot t_1,\ldots,s\cdot t_m,s\cdot u_1,\ldots,s\cdot u_n)$, it belongs to $xy\vee xz$.
\end{proof}
\begin{prop}
	If $x$ and $y$ are the ideals generated by sets $S$ and $T$, respectively, then $xy$ is the ideal generated by $S\cdot T$.  
\end{prop}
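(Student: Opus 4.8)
The plan is to reduce the statement to Lemma 11.2 together with the definition $xy=\langle x\cdot y\rangle$ of the multiplication on $\mathrm{Id}(A)$. Write $x$ and $y$ for the ideals generated by $S$ and $T$; by Lemma 11.2 we have $x=\langle A\cdot S\rangle$ and $y=\langle A\cdot T\rangle$, and (applying the same lemma to the subset $S\cdot T$ of $A$) the ideal generated by $S\cdot T$ equals $\langle A\cdot(S\cdot T)\rangle$. So the assertion to prove is exactly $\langle x\cdot y\rangle=\langle A\cdot(S\cdot T)\rangle$, and I would prove it by the two inclusions separately.

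For the inclusion ``$\supseteq$'' I would use that $xy$ is an ideal (noted right after Lemma 11.2) and that it contains $S\cdot T$: since $S\subseteq x$ and $T\subseteq y$, we get $S\cdot T\subseteq x\cdot y\subseteq\langle x\cdot y\rangle=xy$. An ideal containing $S\cdot T$ contains the ideal generated by $S\cdot T$, i.e.\ $\langle A\cdot(S\cdot T)\rangle$.

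For the inclusion ``$\subseteq$'', since $\langle A\cdot(S\cdot T)\rangle$ is a subalgebra and $xy=\langle x\cdot y\rangle$, it suffices to show $p\cdot q\in\langle A\cdot(S\cdot T)\rangle$ for all $p\in x$ and $q\in y$. Using Lemma 11.2, present $p=v(a_1\cdot s_1,\dots,a_m\cdot s_m)$ and $q=w(b_1\cdot t_1,\dots,b_n\cdot t_n)$ with $a_i,b_j\in A$, $s_i\in S$, $t_j\in T$ and $v,w$ terms of $\mathcal{V}$. Applying the standing hypothesis that every translation $c\cdot(-):A\to A$ is a morphism of $\mathcal{V}$ --- once with $c=q$ (using commutativity of $\cdot$) and then with $c=a_i\cdot s_i$ --- together with associativity and commutativity of $\cdot$, one rewrites
\[
p\cdot q=v\bigl((a_1\cdot s_1)\cdot q,\dots,(a_m\cdot s_m)\cdot q\bigr)
\]
and, for each $i$,
\[
(a_i\cdot s_i)\cdot q=w\bigl((a_i\cdot b_1)\cdot(s_i\cdot t_1),\dots,(a_i\cdot b_n)\cdot(s_i\cdot t_n)\bigr).
\]
Each factor $(a_i\cdot b_j)\cdot(s_i\cdot t_j)$ lies in $A\cdot(S\cdot T)$, so the inner expressions lie in $\langle A\cdot(S\cdot T)\rangle$, hence so does $p\cdot q$.

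The computation in the last paragraph is the only real content, and the one point needing care is the bookkeeping that legitimates pushing $q$, and then $a_i\cdot s_i$, inside the term operations: this is precisely where the ``$c\cdot(-)$ is a $\mathcal{V}$-morphism'' hypothesis enters, exactly as in the proof of Lemma 11.2. One should also dispatch the degenerate case where $v$ (or $w$) is a constant term, i.e.\ $m=0$ (or $n=0$): then $p\cdot q$ is a value of a constant of $\mathcal{V}$, since $\mathcal{V}$-morphisms preserve constants, and such a value belongs to every subalgebra. Beyond this there is no genuine obstacle.
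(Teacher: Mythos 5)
Your proof is correct and follows essentially the same route as the paper: the core step is the identical term-rewriting computation $v(a_1\cdot s_1,\ldots)\cdot w(b_1\cdot t_1,\ldots)=v(w((a_i\cdot b_j)\cdot(s_i\cdot t_j),\ldots),\ldots)$ showing $x\cdot y\subseteq\langle A\cdot(S\cdot T)\rangle$, based on Lemma 11.2 and the hypothesis that each $c\cdot(-)$ is a $\mathcal{V}$-morphism. You are merely a bit more explicit than the paper about the easy reverse inclusion and the degenerate constant-term case.
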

\begin{proof}
	It suffices to prove that $x\cdot y$ is a subset of the ideal generated by $S\cdot T$. By Lemma 11.2, any element of $x\cdot y$ can be written as 
	\begin{equation*}
	v(a_1\cdot s_1,\ldots,a_m\cdot s_m)\cdot w(b_1\cdot t_1,\ldots,b_n\cdot t_n),
	\end{equation*}
	where $s_1\ldots,s_m\in S$ and $t_1\ldots,t_n\in T$, and we have
	\begin{equation*}
	v(a_1\cdot s_1,\ldots,a_m\cdot s_m)\cdot w(b_1\cdot t_1,\ldots,b_n\cdot t_n),
	\end{equation*}
	\begin{equation*}
	=v(a_1\cdot s_1\cdot w(b_1\cdot t_1,\ldots,b_n\cdot t_n),\ldots,a_m\cdot s_m\cdot w(b_1\cdot t_1,\ldots,b_n\cdot t_n))
	\end{equation*}
	\begin{equation*}
	=v(w(a_1\cdot s_1\cdot b_1\cdot t_1,\ldots,a_1\cdot s_1\cdot b_n\cdot t_n),\ldots,w(a_m\cdot s_m\cdot b_1\cdot t_1,\ldots,a_m\cdot s_m\cdot b_n\cdot t_n))
	\end{equation*}
	\begin{equation*}
	=v(w(a_1\cdot b_1\cdot s_1\cdot t_1,\ldots,a_1\cdot b_n\cdot s_1\cdot t_n),\ldots,w(a_m\cdot b_1\cdot s_m\cdot t_1,\ldots,a_m\cdot b_n\cdot s_m\cdot t_n))
	\end{equation*}
	and this element belongs to $\langle A\cdot(S\cdot T)\rangle$.
\end{proof}
Similarly to Theorem 10.2, from these results and Corollary 7.9, we obtain:
\begin{teo}
	The space $\mathrm{Spec}(\mathrm{Id}(A))$ is spectral whenever $A$ is a finitely generated ideal of itself.\qed
\end{teo}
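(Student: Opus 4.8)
The plan is to deduce the statement from Corollary 7.9 applied to the complete multiplicative lattice $L=\mathrm{Id}(A)$. That corollary requires $\mathrm{Id}(A)$ to be compact, algebraic, and distributive, and to satisfy the Kaplansky condition; distributivity is already Proposition 11.3, so the work splits into three checks, all of which I expect to be routine once the description of joins in $\mathrm{Id}(A)$ from the paragraph following Lemma 11.2 is exploited.

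First I would establish the analogue of Remark 10.1(a): an ideal $x$ of $A$ is a compact element of $\mathrm{Id}(A)$ if and only if it is finitely generated as an ideal. The ``only if'' part follows from writing $x=\bigvee_{a\in x}\langle A\cdot\{a\}\rangle$ as a join of principal ideals and invoking compactness. For the ``if'' part, if $x$ is generated by a finite set and $x\leqslant\bigvee_{i}y_i$, then, since joins in $\mathrm{Id}(A)$ coincide with joins of $\mathcal{V}$-subalgebras, each generator of $x$ is a $\mathcal{V}$-term in finitely many elements of $\bigcup_i y_i$, hence lies in $\bigvee_{i\in F}y_i$ for some finite $F$; taking the union of these finitely many finite index sets yields a single finite $F$ doing the job. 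In particular the hypothesis ``$A$ is a finitely generated ideal of itself'' says exactly that $1=A$ is compact, that is, $\mathrm{Id}(A)$ is compact.

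Second, algebraicity is immediate: every ideal $x$ equals $\bigvee_{a\in x}\langle A\cdot\{a\}\rangle$, a join of principal, hence compact, elements. Third, the Kaplansky condition: if $x$ and $y$ are compact, choose finite generating sets $S$ for $x$ and $T$ for $y$; then Proposition 11.4 identifies $xy$ with the ideal generated by $S\cdot T$, which is finite, so $xy$ is finitely generated and therefore compact. Having verified conditions (a) and (b) of Corollary 7.9, that corollary yields that $\mathrm{Spec}(\mathrm{Id}(A))$ is spectral.

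I do not anticipate a genuine obstacle: all of the substantive content lives in Propositions 11.3 and 11.4 and in Corollary 7.9 (and, behind it, Theorem 7.7), so the only mildly delicate point is the characterization of compact elements of $\mathrm{Id}(A)$, which is a standard argument resting on the fact that joins of ideals are computed as subalgebra joins.
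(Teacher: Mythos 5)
Your proposal is correct and follows exactly the route the paper intends: Theorem 11.5 is stated as an immediate consequence of Propositions 11.3 and 11.4 together with Corollary 7.9, with compactness and algebraicity of $\mathrm{Id}(A)$ (and the identification of compact elements with finitely generated ideals) supplied by the description of joins following Lemma 11.2, just as you do. The details you fill in — in particular deriving the Kaplansky condition from Proposition 11.4 via finite generating sets — are precisely the ones the paper leaves implicit.
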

\begin{cor}
	The space $\mathrm{Spec}(\mathrm{Id}(A))$ is spectral whenever the multiplication of $A$ makes it a monoid.\qed
\end{cor}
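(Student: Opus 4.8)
The plan is to reduce this statement to Theorem 11.6, whose sole hypothesis is that $A$ be a finitely generated ideal of itself; so the whole task is to show that a monoid structure on the multiplication $\cdot$ of $A$ forces exactly this. First I would unwind the hypothesis: ``the multiplication of $A$ makes it a monoid'' means there is an element $1\in A$ with $a\cdot 1=a=1\cdot a$ for all $a\in A$.

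Next I would apply Lemma 11.2 to the one-element set $S=\{1\}$: the ideal of $A$ generated by $\{1\}$ equals $\langle A\cdot\{1\}\rangle$. Since $A\cdot\{1\}=\{a\cdot 1\mid a\in A\}=A$ and $A$ is (of course) a $\mathcal{V}$-subalgebra of itself, we get $\langle A\rangle=A$. Hence the ideal generated by $\{1\}$ is all of $A$; equivalently, $A$ is a finitely generated (in fact one-generated) ideal of itself, i.e.\ $A$ is a compact element of the complete multiplicative lattice $\mathrm{Id}(A)$.

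After that the conclusion is immediate: Theorem 11.6 applies verbatim and gives that $\mathrm{Spec}(\mathrm{Id}(A))$ is spectral. I do not expect any genuine obstacle; the only point that deserves to be stated carefully is that ``finitely generated ideal of itself'' is to be read via Lemma 11.2, and that this is precisely the notion of compactness of the top element that enters Corollary 7.9 and hence Theorem 11.6 --- but this is already built into the surrounding development. As a side remark, this corollary recovers the situations of Facts 1.3 and 1.4 at once, since unital commutative semirings, bounded distributive lattices, and commutative monoids all carry such a monoid multiplication on the ambient object.
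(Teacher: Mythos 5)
Your argument is correct and is exactly the paper's (implicit) reason for the \qed: the identity element generates $A$ as an ideal by Lemma 11.2, so $A$ is a finitely generated ideal of itself and the preceding theorem applies. The only slip is a label: the theorem with that sole hypothesis is Theorem 11.5, not 11.6 (11.6 is the corollary being proved).
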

\begin{example}\emph{
	At least in the following cases the assertion of Corollary 11.6 is known:
	\begin{itemize}
		\item [(a)] If $\mathcal{V}$ is the variety of abelian groups, then $\mathcal{C}$ is the variety of (not necessarily unital) commutative rings. In this special case Corollary 11.6 becomes a classical result of commutative algebra, which is mentioned e.g. in \cite{[H1969]} as a well-known one. Note, however, that Theorem 11.5 is also well known in this case: for example, it follows from Theorem 2.9 of \cite{[A1989]} (cf. Remark 10.5(a)), and it follows from Theorem 7 of \cite{[AR2019]}. Generally speaking, there are many similar cases where our Theorems 10.2 and 11.5 are both applicable.
		\item [(b)] If $\mathcal{V}$ is the variety of commutative monoids, then $\mathcal{C}$ is the variety of (not necessarily unital) commutative semirings. In this special case Corollary 11.6 becomes Theorem 3.1 of \cite{[PRV2009]}\footnote{\cite{[PRV2009]} requires all semirings to be unital and to have $0\neq1$, which we don't. However, this does not contradict our previous sentence, except that Theorem 3.1 of \cite{[PRV2009]} does not apply to the trivial semiring.}.
		\item [(c)] If $\mathcal{V}$ is the category of sets (considered as a variety of universal algebras), then $\mathcal{C}$ is the variety of commutative semigroups. In this special case Corollary 11.6 is known: see 4.4 in \cite{[V2010]}, where, however the term ``Kato spectrum'' is used instead of ``prime spectrum''. 
	\end{itemize}}
\end{example}

\section{Additional remarks}

\textbf{12.1. Changing morphisms of complete multiplicative lattices.} We called an adjunction $(f,u):X\to Y$ compatible if $f(x)f(x')\leqslant f(xx')$ for all $x,x'\in X$, and then called strictly compatible if this inequality is actually an equality. But what about requiring the opposite inequality $f(xx')\leqslant f(x)f(x')$, and calling this property ``op-compatibility'' (by analogy with the opmonoidal functors between monoidal categories in contrast to the monoidal ones)? Note that:
\begin{itemize}
	\item [(a)] Unlike compatibility, once we require the monotonicity condition, the op-compatibility of $(f,u):X\to Y$ can be equivalently reformulated in terms of $u$ alone: it holds if and only if $u(y)u(y')\leqslant u(yy')$ for all $y,y'\in Y$. Note also, that the counterpart of $f(1)=1$, which is the same as $1\leqslant f(1)$, becomes here the trivial condition $f(1)\leqslant1$.
	\item [(b)] The reformulation above agrees with condition (b) of Definition 9.1. This tells us that if $\mathcal{C}$ and $\mathrm{C}$ are as in Definition 9.1, then we have a commutative diagram
	\begin{equation*}
	\xymatrix{&&(\mathsf{CML}_\mathrm{s})^\mathrm{op}\ar[dl]\ar[dr]\\\mathcal{C}^\mathrm{op}\ar[drr]_-{\mathrm{ER}}\ar[r]\ar@{.>}[urr]&(\mathsf{CML}_\mathrm{op})^\mathrm{op}\ar[dr]^{(f,u)\mapsto u}&&\mathsf{CML}^\mathrm{op}\ar[dl]_{(f,u)\mapsto u}\ar[r]^{\mathrm{Spec}}&\mathsf{STop},\\&&\mathsf{CompLat}_{\wedge}}
	\end{equation*}
	in which:
	\begin{itemize}
		
		\item [($\mathrm{b}_1$)] $\mathsf{CML}_\mathrm{op}$ is the same as $\mathsf{CML}$, except that its morphisms are required to be op-compatible adjunctions.
		\item [($\mathrm{b}_2$)] The horizontal unlabeled arrow denotes the functor opposite to the functor $\mathrm{C}^*:\mathcal{C}\to\mathsf{CML}_\mathrm{op}$ defined by $\mathrm{C}^*(\alpha:A\to B)=(f,u):\mathrm{ER}(A)\to\mathrm{ER}(B)$, where $u=\mathrm{ER}(\alpha)$ and so $f$ is the left adjoint of $\mathrm{ER}(\alpha)$. In the situation of Section 10, after identifying $\mathrm{ER}(A)$ with the lattice of congruences on $A$, we can say that, for $x\in\mathrm{ER}(A)$, $f(x)$ is defined as the congruence on $B$ generated by the image of $x$ under the homomorphism $\alpha\times\alpha:A\times A\to B\times B$.
		\item [($\mathrm{b}_3$)] The other unlabeled solid arrows denote the inclusion functors.
		\item [($\mathrm{b}_4$)] The dotted arrow, uniquely determined by commutativity of our diagram, when it exists, in fact exists only in some special cases. For example, in the situation of Section 10, it exists when $\mathcal{C}$ is the category of commutative rings, but not  when $\mathcal{C}$ is the category of (all) rings.
		\item [($\mathrm{b}_5$)] If, however, again in the situation of Section 10, we take $\mathcal{C}$ to be any congruence modular variety, but restrict its morphisms, namely take only surjections, then the dotted arrow always exists. That is, in the notation of ($\mathrm{b}_2$), if $\mathcal{C}$ is a congruence modular variety of universal algebras and $\alpha:A\to B$ is surjective, then $f(xx')=f(x)f(x')$ for all $x,x'\in\mathrm{ER}(A)$. In the notation of \cite{[JP2001]}, this equality would be written as $\alpha_\#[x,x']=[\alpha_\#x,\alpha_\#x']$ (see Theorem 8.1 there, although it uses different letters). This well-known fact goes back to first papers on `modular commutators' and plays an important role in commurator theory (see e.g. \cite{[FM1987]}). 
	\end{itemize}
\end{itemize} 

\textbf{12.2. Commutative reflections.} Let $\mathsf{MCML}$ and $\mathsf{CMCML}$ be the full subcategories of $\mathsf{CML}$ with objects all objects of $\mathsf{CML}$ that satisfy the monotonicity condition, and that satisfy the monotonicity condition and also have commutative multiplication, respectively. Let $\mathsf{MCML}_\mathrm{s}$ and $\mathsf{CMCML}_\mathrm{s}$ be similar full subcategories of $\mathsf{CML}_\mathrm{s}$, and $\mathsf{MCML}_\mathrm{op}$ and $\mathsf{CMCML}_\mathrm{op}$ be similar full subcategories of $\mathsf{CML}_\mathrm{op}$. Going back to Example 3.7, let us write informally $(1_L,1_L):\mathrm{Com}(L)\to L$ for the compatible adjunction considered there. By definition of $\mathsf{CML}$, this makes $(1_L,1_L)$ a morphism from $\mathrm{Com}(L)$ to $L$ in $\mathsf{CML}$. On the other hand, the same pair $(1_L,1_L)$ can also be considered as a morphism from $L$ to $\mathrm{Com}(L)$ in $\mathsf{CML}_\mathrm{op}$. Moreover, we have:
\begin{itemize}
	\item [(a)] The assignment $X\mapsto\mathrm{Com}(X)$ together with all compatible adjunctions of the form $(1_X,1_X):\mathrm{Com}(X)\to X$ determines a functor 
	\begin{equation*}
	\mathsf{MCML}\to \mathsf{CMCML},
	\end{equation*}
	which is the right adjoint of the inclusion functor. To prove all this, just note that, for every compatible adjunction $(f,u):X\to Y$ with $X$ in $\mathsf{CMCML}$ and $Y$ in $\mathsf{MCML}$, we have
	\begin{equation*}
	f(x)*f(x')=f(x)f(x')\vee f(x')f(x)\leqslant f(xx')\vee f(x'x)
	\end{equation*}
	\begin{equation*}
	\leqslant f(xx')\vee f(xx')=f(xx')
	\end{equation*}
	for all $x,x'\in X$, making $(f,u):X\to\mathrm{Com}(Y)$ a compatible adjunction.
	\item [(b)] The assignment $X\mapsto\mathrm{Com}(X)$ together with all compatible adjunctions of the form $(1_X,1_X):X\to\mathrm{Com}(X)$ determines a functor 
	\begin{equation*}
	\mathsf{MCML}_\mathrm{op}\to \mathsf{CMCML}_\mathrm{op},
	\end{equation*}
	which is the left adjoint of the inclusion functor. To prove all this, just note that, for every compatible adjunction $(f,u):X\to Y$ with $X$ in $\mathsf{MCML}_\mathrm{op}$ and $Y$ in $\mathsf{CMCML}_\mathrm{op}$, we have
	\begin{equation*}
	f(x*x')=f(xx'\vee x'x)=f(xx')\vee f(x'x)
	\end{equation*}
	\begin{equation*}
	\leqslant f(x)f(x')\vee f(x')f(x)=f(x)f(x')\vee f(x)f(x')=f(x)f(x')
	\end{equation*}
	for all $x,x'\in X$, making $(f,u):\mathrm{Com}(X)\to Y$ a compatible adjunction.
\end{itemize}
In particular, $\mathsf{CMCML}$ is a coreflective full subcategory of $\mathsf{MCML}$, while $\mathsf{CMCML}_\mathrm{op}$ is a reflective full subcategory of $\mathsf{MCML}_\mathrm{op}$. What about $\mathsf{CMCML}_\mathrm{s}$ and $\mathsf{MCML}_\mathrm{s}$? Well, independently of the monotonicity condition it is easy to see that:
\begin{itemize}
	\item [(c)] $\mathsf{CMCML}_\mathrm{s}$ is a reflective full subcategory of $\mathsf{MCML}_\mathrm{s}$, but not with a reflection that has $X\to\mathrm{Com}(X)$.
	\item [(d)] $\mathsf{CMCML}_\mathrm{s}$ is not coreflective in $\mathsf{MCML}_\mathrm{s}$.   
\end{itemize}

\textbf{12.3. The functor Spec as a right adjoint.} Now let us go back to Theorem 3.2 and what we considered between it and Theorem 3.6, and observe:
\begin{itemize}
	\item [(a)] Let $Y$ be a spatial frame (=spatial locale) considered as a complete multiplicative lattice with $yy'=y\wedge y'$ for all $y,y'\in Y$. Then $y=y'$ in $Y$ if and only if $y\leqslant p\Leftrightarrow y'\leqslant p$ for every prime element $p$ in $Y$. This is a well known fact in frame theory; in fact a frame is spatial if and only if it satisfies this condition.  
	\item [(b)] Let $(f,u):X\to Y$ be a compatible adjunction between complete multiplicative lattices. If $Y$ is a frame, then
	\begin{equation*}
	\sqrt{x}=\sqrt{x'}\Rightarrow f(x)=f(x')
	\end{equation*}
	for all $x,x'\in X$. Indeed, we have
	\begin{equation*}
	\sqrt{x}=\sqrt{x'}\Rightarrow((p\,\text{is prime in}\,Y)\Rightarrow(x\leqslant u(p)\Leftrightarrow x'\leqslant u(p)))
	\end{equation*}
	\begin{equation*}
	\Leftrightarrow((p\,\text{is prime in}\,Y)\Rightarrow(f(x)\leqslant p\Leftrightarrow f(x')\leqslant p))\Leftrightarrow f(x)=f(x'),
	\end{equation*}
	using Theorem 3.2(a) and observation (a).
	\item [(c)] As easily follows from (b), the assignment $X\mapsto\sqrt{X}$ together with all compatible adjunctions of the form $X\to\sqrt{X}$, constructed as in Remark 3.5(c), determines a functor from $\mathsf{CML}$ to the category $(\mathsf{CLoc})^\mathrm{op}$ of spatial frames, which is the left adjoint of the inclusion functor.
	\item [(d)] From (c) and known results on locales/frames recalled in Section 3, we obtain the commutative diagram
	\begin{equation*}
	\xymatrix{\mathsf{CML}^\mathrm{op}\ar[d]_{\mathrm{coreflection}}\ar[drr]|{X\mapsto\sqrt{X}}\ar[rr]^{\mathsf{Spec}}&&\mathsf{STop}\\\mathsf{Loc}\ar[rr]_{X\mapsto\sqrt{X}}&&\mathsf{SLoc}\ar[u]_{{\mathsf{Spec}|}_{\mathsf{SLoc}}}}
	\end{equation*}
	in which:
	\begin{itemize}
		\item [($\mathrm{d}_1$)] According to (c), the diagonal arrow is the coreflection from $\mathsf{CML}^\mathrm{op}$ to its full subcategory $\mathsf{SLoc}$ of spatial locales.
		\item [($\mathrm{d}_2$)] The left-hand vertical arrow is well defined since: (i) $\mathsf{Loc}^{\mathrm{op}}$ can be seen as an equationally defined full subcategory of the infinitary algebraic category $\mathsf{CML}_\mathrm{s}$; (ii) replacing $\mathsf{CML}_\mathrm{s}$ with $\mathsf{CML}$ does not change anything since every morphism in $\mathsf{CML}$, whose codomain is a frame, automatically belongs to $\mathsf{CML}_\mathrm{s}$.
		\item [($\mathrm{d}_3$)] The triangle involving $\mathsf{Loc}$ indeed commutes since all functors there are right adjoints of the inclusion functors.
		\item [($\mathrm{d}_4$)] The fact that the triangle involving $\mathsf{Stop}$ commutes easily follows our observations in Remark 3.5(c).
	\end{itemize}
	\item [(e)] As follows from (d) we could simply define the functor
	\begin{equation*}
	\mathrm{Spec}:\mathsf{CML}^{\mathrm{op}}\to\mathsf{STop}
	\end{equation*}
    as the right adjoint of the composite of
    \begin{equation*}
    \xymatrix{\mathsf{STop}\ar[rrr]^{\mathrm{standard\,equivalence}}&&&\mathsf{SLoc}\ar[rr]^{\mathrm{inclusion}}&&\mathsf{CML}^{\mathrm{op}}}
    \end{equation*}
    or, equivalently, as the composite of
    \begin{equation*}
    \xymatrix{\mathsf{STop}\ar[rr]^{\mathrm{inclusion}}&&\mathsf{Top}\ar[r]^{\Omega}&\mathsf{Loc}\ar[rr]^{\mathrm{inclusion}}&&\mathsf{CML}^{\mathrm{op}}.}
    \end{equation*}
    \item [(f)] This suggests removing the condition $xy\leqslant x\wedge y$ from our definition of complete multiplicative lattice, call such more general structure a, say, complete pseudo-multiplicative lattice, call the category of such structures $\mathsf{PCML}$, and extend the functor $\mathrm{Spec}$ to $\mathsf{PCML}$ by taking the composite
    \begin{equation*}
    \xymatrix{\mathsf{PCML}^{\mathrm{op}}\ar[rr]^{\mathrm{coreflection}}&&\mathsf{CML}^{\mathrm{op}}\ar[rr]^{\mathrm{Spec}}&&\mathsf{STop},}
    \end{equation*}
    but we don't see yet any interesting example of this construction. 
\end{itemize}

\textbf{12.4. Semiprime elements.} Let us call an element $s$ of $L$ \textit{semiprime} if it satisfies the implication 
\begin{equation*}
x^2\leqslant s\Rightarrow x\leqslant s,
\end{equation*}
or, equivalently, it has $x^2\leqslant s\Leftrightarrow x\leqslant s$, for all $x\in L$. It is clear that the set of all semiprime elements of $L$ is closed under arbitrary meets in $L$, and so we have a closure operator $\mathrm{sp}:L\to L$, which associates, to each $x\in L$, the smallest semiprime $s\in L$ with $x\leqslant s$. Let us compare $\mathrm{sp}(x)$ with $\sqrt{x}$ for an arbitrary $x\in L$:
\begin{itemize}
	\item [(a)] Since every prime element is (obviously) semiprime, so is every radical element and we have $\mathrm{sp}(x)\leqslant\sqrt{x}$.
	\item [(b)] Using transfinite induction, it is easy to see that $\mathrm{Solv}(x)\leqslant\mathrm{sp}(x)$.
	\item [(c)] Therefore whenever $\sqrt{x}\leqslant\mathrm{Solv}(x)$, we have $\mathrm{Solv}(x)=\mathrm{sp}(x)=\sqrt{x}$. In particular, these equalities hold under the assumptions of Theorem 7.4(b).
	\item [(d)] We are interested in proving the equality $\mathrm{sp}(x)=\sqrt{x}$ under weaker conditions, specifically only requiring $L$ to be distributive and algebraic, as in Theorem 6.17, which avoids the weak Kaplansky condition required in Theorem 7.4(b). Having (a) and Theorem 6.17 in mind, we only need to prove the inequality $\mathrm{loc.solv}(x)\leqslant\mathrm{sp}(x)$. This can be done by copying an argument from the proof of Proposition 6.14 replacing the prime element used there with a semiprime one. That is, we have: If $L$ is distributive and algebraic, then $\mathrm{sp}(x)=\sqrt{x}$ for every $x\in L$. However, this equality is the same as Theorem A in \cite{[K1972]}; recall: that Theorem A corrects Satz 2.5 of \cite{[S1968]} (see also \cite{[S1971]} for a more general result). 
\end{itemize}

\textbf{12.5. `Not enough' primes:} Not in the sense of Section 5, but in the sense that there are no primes strictly between $xy$ and $x\wedge y$. More precisely, for $p,x,y\in L$, if $p$ is prime, then the following conditions are equivalent:
\begin{itemize}
	\item [(a)] $xy\leqslant p\leqslant x\wedge y$;
	\item [(b)] $(x\leqslant p$ or $y\leqslant p)$, $p\leqslant x$, and $p\leqslant y$; 
	\item [(c)] $x=p\leqslant y$ or $y=p\leqslant x$;
	\item [(d)] $p=x\wedge y$.
\end{itemize}
Indeed, each implication in (a)$\Rightarrow$(b)$\Rightarrow$(c)$\Rightarrow$(d)$\Rightarrow$(a) is obvious.
\vspace{1mm}

\textbf{12.6. Involving different kinds of commutators.} Considering complete multiplicative lattices of internal equivalence relations and, in particular, congruences of universal algebras in Sections 9 and 10, we were using the following kinds of commutators:
\begin{itemize}
	\item [(a)] General commutators, only required to satisfy `most mild' axioms (see Definition 9.1 with reference to \cite{[J2008]}).
	\item [(b)] Commutators defined via pseudogroupoids \cite{[JP2001]}, which generalize the modular commutator, in fact considered `the most standard' in universal algebra (the references are given in Section 10), and, in particular, the Smith commutator \cite{[S1976]}.  
\end{itemize}
Now let us add, assuming for simplicity that all categories we are talking about satisfy the three conditions given at the beginning of Section 9 and have finite colimits:
\begin{itemize}
	\item [(c)] The notion of Smith commutator was extended from Mal'tsev varieties to Barr exact Mal'tsev categories by M. C. Pedicchio \cite{[P1995]}, but this is also a special case of the commutator in the sense of \cite{[JP2001]}. 
	\item [(d)] The commutator used in the papers mentioned in the first sentence of Fact 1.8 (of Section 1) is either Huq's commutator in categories satisfying Huq's axioms, or Higgins' commutator (for normal subobjects) in varieties of groups with multiple operators. The relationship between these two commutators was clarified by A. S. Cigoli, J. R. A. Gray, and T. Van der Linden \cite{[CGV2015]} in the context of semi-abelian categories \cite{[JMT2002]}, whose `old-style' axioms are essentially the same as Huq's axioms. See also related previous work of S. Mantovani and G. Metere \cite{[MM2010]}, who in fact introduced the categorical generalization of Higgins commutator in the more general context of ideal determined categories in the sense of \cite{[JMTU2010]}. On the other hand, the weighted commutators introduced in \cite{[GJU2014]}, again in the semi-abelian context, give the Smith\textendash Pedicchio commutator and the Huq commutator as special cases. And of course each weighted commutator produces its own complete multiplicative lattice. As follows from the results of Sections 9 and 10, this gives many examples of sober and even spectral (=coherent) spectra.
	\item [(e)] Note, however, that the Smith\textendash Pedicchio and the Huq commutator often coincide making the weighted commutator independent of its weight. This is the case when the ground semi-abelian category $\mathcal{C}$ is strongly protomodular \cite{[B2004]}, or action accessible \cite{[BJ2009]}. In particular, as follows from the action accessibility theorem of A. Montoli \cite{[M2010]}, it is the case when $\mathcal{C}$ is a `category of interest' in the sense of G. Orzech \cite{[O1972]}, which applies to many categories of classical algebraic structures. Various further relevant comments about the coincidence of two commutators can be found in \cite{[BG2002]}, \cite{[GV2008]}, \cite{[MV2012]}, \cite{[BMV2013]}, \cite{[MV2014]}, and \cite {[MV2015]}. The list of known algebraic structures whose categories have the two commutators different from each other includes loops (where, as mentioned in \cite{[GJU2014]}, this can be deduced from the result of Exercise 10 of Chapter 5 in \cite{[FM1987]}), digroups (=sets equipped with two independent group structures with the same identity element; see e.g. \cite{[B2004]}), and near-rings (as shown in \cite{[JMV2016]}; note, on the other hand, that prime ideals of near-rings were introduced and studied in \cite{[V1964]}).       
	\item [(f)] There is more to say about various non-modular commutators introduced in universal algebra, the relative commutator in the sense of T. Everaert and T. Van der Linden \cite{[EV2012]}, and commutator theory and related studies in regular (not necessarily Barr exact) categories developed in several papers of D. Bourn and M. Gran, but we omit it here.	  
\end{itemize}
\textbf{12.7. Rings.} We already mentioned the case of rings in Introduction and in Remarks 10.3 and 10.4(a). Now let us add, partly with repetitions:
\begin{itemize}
	\item [(a)] For a ring $A$, the complete multiplicative lattice $\mathrm{Cong}(A)$ considered in Section 10 and, in particular, in Theorem 10.2, can be identified of course with the complete lattice $\mathrm{Id}(A)$ of ideals of $A$, with the multiplication defined by $xy=x\cdot y+y\cdot x$, where $\cdot$ denotes the usual multiplication of ideals (this notation should not be confused with what we used in Section 11).
	\item [(b)] In particular an ideal $p$ of $A$ is defined to be prime, if, for all ideals $x$ and $y$ of $A$, we have
	\begin{equation*}
	x\cdot y+y\cdot x\leqslant p\Rightarrow (x\leqslant p\,\,\,\text{or}\,\,\,y\leqslant p).
	\end{equation*}
	However, as follows from the result of 12.2, this definition is equivalent to the usual one, which uses $x\cdot y$ instead of $x\cdot y+y\cdot x$:
	\begin{equation*}
	x\cdot y\leqslant p\Rightarrow (x\leqslant p\,\,\,\text{or}\,\,\,y\leqslant p).
	\end{equation*}
	It seems that in the case of rings this equivalence was first noticed by G. K. Gerber in \cite{[G1983]} (see also \cite{[BG1985]} for a more general observation). 
	\item [(c)] As follows from (b), $\mathrm{Spec}(\mathrm{Cong}(A))=\mathrm{Spec}(\mathrm{Id}(A))$ is the same as the classical prime spectrum of $A$.
	\item [(d)] Theorem 10.2 becomes: If (i) $A$ is finitely generated as an ideal of itself, and (ii) the product of any two finitely generated ideals of $A$ is a finitely generated ideal, then $\mathrm{Spec}(\mathrm{Cong}(A))=\mathrm{Spec}(\mathrm{Id}(A))$ is a spectral space. Here: (i) holds whenever $A$ unital (cf. 10.4(b)), and (ii) is exactly Kaplansky's definition of neo-commutativity for $A$ \cite{[K1974]}. That is, when $A$ is unital, Theorem 10.2 becomes Kaplansky's result on the spectrality of the prime spectrum of $A$. This explains what we said about Kaplansky's and Belluce's results in Fact 1.5. Specifically, we see that Theorem 10.2 generalizes Kaplansky's result, but to generalize Belluce's result we would have to prove Theorem 7.7 omitting condition 7.7(b) from its assumptions, or, equivalently, prove Corollary 7.9 assuming 7.7(c) instead of 7.9(c). We don't know whether this is possible or not.   
\end{itemize}
	
{}
\end{document}